\newtheorem{theorem}{Theorem}
\newtheorem{lemma}{Lemma}[section]
\newtheorem{corollary}{Corollary}[theorem]
\newtheorem{proposition}{Proposition}[section]
\newtheorem{definitionsec}{Definition}[section]
\newtheorem{remark}{Remark}[section]
\DeclareMathOperator{\vr}{\mathrm Var}
\DeclareMathOperator{\spb}{\mathrm{sp}_b}
\DeclareMathOperator{\Lat}{\mathcal L}
\DeclareMathOperator{\chr}{\mathrm char}
\newcommand{\bsp}[1]{\spb\left(#1\right)}
\newcommand{\bspp}[1]{\spb\bigl(#1\bigr)}
\newcommand{\lt}[1]{\Lat\left(#1\right)}
\newcommand{\ltt}[1]{\Lat\bigl(#1\bigr)}
\newcommand{\V}{\mathcal V}
\newcommand{\Ve}{\mathcal V^{\left<\varepsilon\right>}}
\newcommand{\svob}[2]{F_{\scriptstyle #1}\left[#2\right]}
\newcommand{\svobs}[2]{F^{(s)}_{\scriptstyle #1}\left[#2\right]}
\newcommand{\pnv}{\mathcal P_n\left(\mathcal V\right)}
\newcommand{\pnvv}{\mathcal P_n\left(\mathfrak N\right)}
\newcommand{\Ae}{\A^{\left<\varepsilon\right>}}
\newcommand{\ass}[3]{\left(#1,#2,#3\right)}
\newcommand{\A}{\mathcal A}
\newcommand{\B}{\mathcal B}
\title{\vspace{-24pt}
\bfseries Basic superranks for varieties of algebras}
\author{\bfseries Alexey Kuz'min\thanks{The first author is supported by the FAPESP 2010/51880--2 and the PNPD/CAPES/UFRN--PPgMAE},
Ivan Shestakov\thanks{The second author is supported by the FAPESP 2014/09310--5 and the CNPq 303916/2014--1}}
\date{}
\begin{document}
\maketitle

\begin{abstract}
We introduce the notion of basic superrank for varieties of algebras generalizing the notion of basic rank.
First we consider a number of varieties of nearly associative algebras over a field of characteristic~$0$
that have infinite basic ranks  and calculate  their basic superranks which turns out to be finite.
Namely we prove that the variety of alternative metabelian (solvable of index~$2$)
algebras possesses the two basic superranks $(1,1)$ and $(0,3)$;
the varieties  of Jordan and Malcev metabelian algebras have the unique basic superranks $(0,2)$ and $(1,1)$, respectively.
Furthermore, for arbitrary pair $(r,s)\neq (0,0)$ of nonnegative integers we provide a variety that has the unique basic superrank~$(r,s)$.
Finally, we construct some examples of nearly associative varieties that do not possess finite basic superranks.
\end{abstract}

\begin{list}{}{\rightmargin=\leftmargin\small}
\item\hspace{16pt}{\bfseries Key words:}
alternative algebra, Jordan algebra, Malcev algebra, metabelian algebra,
Grassmann algebra, superalgebra,
variety of algebras, basic rank of variety, basic superrank of variety, basic spectrum of variety.

\bigskip

\textit{MSC 2010:} 17A50, 17A70, 17C05, 17D05, 17D10, 17D15.
\end{list}

\begin{flushright}
{\it Dedicated to Efim Isaakovich Zelmanov,\\ on the occasion of his  60th birthday}
\end{flushright}
\section*{Introduction}

Throughout the paper, all algebras are considered over a field of characteristic~$0$.
Let $\mathcal V$ be a variety of algebras and
$\mathcal V_r$ be a subvariety of $\mathcal V$ generated by the free $\mathcal V$-algebra of rank~$r$.
Then one can consider the chain
\[
\mathcal V_1\subseteq\mathcal V_2\subseteq\dots\subseteq\mathcal V_r\subseteq\dots\subseteq\mathcal V,
\]
where
$\mathcal V=\bigcup_{r}\mathcal V_r$.
If this chain stabilizes, then the minimal number~$r$ with the property
$\mathcal V_r=\mathcal V$
is called the \textit{basic rank} of the variety
$\mathcal V$
and is denoted by $\mathit{r_{b}}\left(\mathcal V\right)$ (see \cite{Malcev73}).
Otherwise, we say that $\mathcal V$ has the \textit{infinite basic rank}
$\mathit{r_{b}}\left(\mathcal V\right)=\aleph_0$.

Let us recall the main results on the basic ranks of the
\textit{varieties of associative}
($\mathrm{Assoc}$),
\textit{Lie}
($\mathrm{Lie}$),
\textit{alternative}
($\mathrm{Alt}$),
\textit{Malcev}
($\mathrm{Malc}$),
and some other \textit{algebras}.
It was first shown by A.~I.~Mal'cev~\cite{Malcev73} that
$r_b\left(\mathrm{Assoc}\right)=2$.
A.~I.~Shirshov~\cite{Zhevlakov-Slin'ko-Shestakov-Shirshov} proved that
$r_b\left(\mathrm{Lie}\right)=2$
and
$r_b\left(\mathrm{SJord}\right)=2$,
where $\mathrm{SJord}$ is the variety generated by all special Jordan algebras.
In 1958, A.~I.~Shirshov posed a problem on finding basic ranks
for alternative and some other varieties of nearly associative algebras \cite[Problem 1.159]{Dniester notebook}.
In 1977, the second author proved that $r_b\left(\mathrm{Alt}\right)=
r_b\left(\mathrm{Malc}\right)=\aleph_0$~\cite{Shestakov77,Zhevlakov-Slin'ko-Shestakov-Shirshov}.
The similar fact for the variety of algebras of type~$(-1,1)$ was established by S.~V.~Pchelintsev~\cite{Pchelintsev75}.
Note that the basic ranks of the varieties of Jordan and right alternative algebras are still unknown.

A proper subvariety of associative algebras can be of infinite basic rank as well.
For instance, so is the variety $\vr\mathrm G$ generated by the Grassmann algebra $\mathrm G$ on infinite number of generators,
or the variety defined by the identity $[x,y]^n=0,\ n>1$.

In 1986, A.~R.~Kemer~\cite{Kemer87,Kemer88} solved affirmatively the famous Specht problem~\cite{Specht50}
using the tool of superalgebras.
Recall that a variety $\mathcal V$ of algebras is called \textit{Spechtian}
or is said \textit{to have the Specht property}
if every algebra of $\mathcal V$ possesses a finite basis for its identities.
The Kemer Theorem states the Specht property of the variety of  associative algebras.
This result is obtained by certain reduction to the case of graded identities of finite dimensional superalgebras.
Namely it is proved that the ideal of identities of arbitrary associative algebra coincides with
the ideal of identities of the Grassmann envelope of some finite dimensional superalgebra.

This result suggests a generalization of the notion of basic rank.
Namely we shall say that a variety $\V$ has a finite basic superrank if
it can be generated by the Grassmann envelope of some finitely generated superalgebra.
Then  Kemer's result implies that every variety of associative algebras has a finite basic superrank.
This means that the notion of basic superrank is a more refined one then that of basic rank,
and we can distinguish varieties of infinite basic rank by their superranks.

The notion of basic superrank is the main subject of our paper.
First we consider a number of varieties of nearly associative algebras over a field of characteristic~$0$
that have infinite basic ranks  and calculate  their basic superranks which turns out to be finite.
Namely we prove that the variety of alternative metabelian (solvable of index~$2$)
algebras possesses the two basic superranks
$(1,1)$ and $(0,3)$;
the varieties  of Jordan and Malcev metabelian algebras have the unique basic superranks $(0,2)$ and $(1,1)$, respectively.
Furthermore, for arbitrary pair
$(r,s)\neq (0,0)$
of nonnegative integers we provide a variety that has the unique basic superrank~$(r,s)$.
Finally, we construct some examples of nearly associative varieties of algebras that do not possess finite basic superranks.

\section{Main definitions and results}%

Let
$\mathcal A=\mathcal A_0+\mathcal A_1$
be a \textit{superalgebra}
($\mathbb Z_2\text{-graded algebra}$) with the
\textit{even part $\mathcal A_0$}
and the
\textit{odd part $\mathcal A_1$}, i.~e.
$\mathcal A_i\mathcal A_j\subseteq\mathcal A_{i+j\;\,(\mathrm{mod} 2)}$
for
$i,j\in\{0,1\};$
${\mathrm G}$
be the \textit{Grassmann algebra} on a countable set of anticommuting generators
$\left\{e_1,e_2,\ldots\mid e_ie_j=-e_je_i\right\}$
with the natural $\mathbb Z_2\text{-grading}$
($\mathrm G_0$ and $\mathrm G_1$ are spanned by the words of even and, respectively, odd length on $\left\{e_i\right\}$).
The \textit{Grassmann envelope}
${\mathrm G}\left(\mathcal A\right)$
of a superalgebra $\mathcal A$ is the subalgebra\linebreak
$
{\mathrm G_0}\otimes \mathcal A_0+{\mathrm G_1}\otimes \mathcal A_1
$
of the tensor product
${\mathrm G}\otimes \mathcal A$.

It is well known that
${\mathrm G}\left(\mathcal A\right)$
satisfies a multilinear polynomial identity
$f=0$
if and only if
$\mathcal A$
satisfies the graded identity
$\Tilde{f}=0$
called the
\textit{superization of~$f=0$.}
Here,
$\Tilde{f}$ denotes the so-called
\textit{superpolynomial corresponding to $f$} and
we say that
$\mathcal A$ satisfies the \textit{superidentity}
$\Tilde{f}=0$.
The detailed descriptions of the process of constructing of superpolynomials
(the \textit{superizing process})
can be found in~\cite{Shestakov91,Shestakov93,Vaughan-Lee98,Zelmanov-Shestakov90}.
Roughly speaking, one should apply the so-called {\it Koszul rule} (or {\it Kaplansky rule}):
one should introduce the sign $(-1)^{ij}$ always when a variable of parity $i$ passes through a variable of parity $j$.

Let $\mathcal V$ be a variety of algebras defined by a system
$S$
of multilinear identities.
Recall that $\mathcal A$ is said to be a
$\mathcal V\textit{-superalgebra}$ if its Grassmann envelope lies in $\mathcal V$,
i.~e. if
$\mathcal A$
satisfies the system
$\Tilde{S}$
of all superidentities corresponding to the defining identities
of~$\mathcal V$.
Thus one can consider the set
$\Tilde{\mathcal V}$
of all
$\mathcal V\text{-superalgebras}$
as a
\textit{supervariety defined by the system
$\Tilde{S}$.}
It is clear that
$\Tilde{\mathcal V}$
can be generated by the free
$\mathcal V\text{-superalgebra}$ on a countable set of even and a countable set of odd generators.
Let
$\mathcal V_{r,s}$
be the
\textit{supervariety generated by the free
$\mathcal V\text{-superalgebra}$ on $r$ even and $s$ odd generators}
for
$(r,s)\neq (0,0)$ and
$\mathcal V_{0,0}=\bigl\{\left\{0\right\}\bigr\}$.
By
$\lt{\V}$
denote the lattice
\[
\begin{matrix}
                           &           & \mathcal V_{1,0}           & \subseteq  & \mathcal V_{2,0}            & \subseteq\cdots\subseteq  & \mathcal V_{r,0}                 & \subseteq\cdots \\
                           &           & \shortmid\hspace{-2pt}\cap &            & \shortmid\hspace{-2pt}\cap  &                           & \shortmid\hspace{-2pt}\cap       &                 \\
\mathcal V_{0,1}           & \subseteq & \mathcal V_{1,1}           & \subseteq  & \mathcal V_{2,1}            & \subseteq\cdots\subseteq  & \mathcal V_{r,1}                 & \subseteq\cdots \\
\shortmid\hspace{-2pt}\cap &           & \shortmid\hspace{-2pt}\cap &            & \shortmid\hspace{-2pt}\cap  &                           & \shortmid\hspace{-2pt}\cap       &                 \\
\vdots                     &           & \vdots                     &            & \vdots                      &                           & \vdots                           &                 \\
\shortmid\hspace{-2pt}\cap &           & \shortmid\hspace{-2pt}\cap &            & \shortmid\hspace{-2pt}\cap  &                           & \shortmid\hspace{-2pt}\cap       &                 \\
\mathcal V_{0,s}           & \subseteq & \mathcal V_{1,s}           & \subseteq  &\mathcal V_{2,s}             &\subseteq\cdots\subseteq   &\mathcal V_{r,s}                  & \subseteq\cdots \\
\shortmid\hspace{-2pt}\cap &           & \shortmid\hspace{-2pt}\cap &            & \shortmid\hspace{-2pt}\cap  &                           & \shortmid\hspace{-2pt}\cap       &                 \\
\hdotsfor{8}
\end{matrix}
\]
It is clear that
$\lt{\V}$
is partially well-ordered with the relation of inclusion.
A pair
$(r,s)$
is called the
\textit{basic superrank of the variety $\mathcal V$}
if
$\mathcal V_{r,s}$
is a minimal element of
$\lt{\V}$
with the property
$\mathcal V_{r,s}=\Tilde{\mathcal V}$.
If
$\mathcal V_{r,s}\neq\Tilde{\mathcal V}$
for all
$r,s$,
we say that the
\textit{basic superrank of $\mathcal V$ is infinite.}
The set of all possible finite basic superranks of
$\mathcal V$
is called the
\textit{basic spectrum of~$\mathcal V$} and is denoted by
$\bsp{\V}$.
For instance, one can easily check that
$\bsp{\mathrm{Assoc}}=\bsp{\mathrm{Lie}}=\{(2,0),(1,1),(0,2)\}$.
It is not hard to prove that if
$\V$
possesses at least one finite basic superrank
$(r,s)$, then
$\bsp{\V}$ is a finite set of at most $r+s+1$ elements.

\smallskip
The first examples of varieties of infinite basic superrank were constructed by M.~V.~Zaitsev~\cite{Zaitsev95,Zaitsev98}.

\smallskip

Let us fix some notations.
While writing down nonassociative monomials we use the symbol
$\cdot$
instead of parentheses to indicate the correct order of multiplication.
For instance, we write
$xy\cdot z$
instead of
$(xy)z$
and
$x\cdot yz$
instead of
$x(yz)$.
By
$$
\left[x,y\right]=xy-yx
\quad\text{and}\quad
x\circ y=xy+yx
$$
we denote, respectively,
the \textit{commutator} and the \textit{Jordan product} of the elements~$x,y$.
The notations
$$
\left(x,y,z\right)=xy\cdot z-x\cdot yz
\quad\text{and}\quad
\mathrm{J}\left(x,y,z\right)=xy\cdot z+yz\cdot x+zx\cdot y
$$
are used, respectively, for the \textit{associator} and  the \textit{Jacobian} of the elements~$x,y,z$.

Recall that the \textit{varieties}
$\mathrm{Alt}$, $\mathrm{Jord}$, and $\mathrm{Malc}$
\textit{of alternative, Jordan,} and \textit{Malcev algebras}
are defined by the following pairs of identities:
\begin{align}
\label{eq Alt}
\mathrm{Alt}:&\quad
\left(x,x,y\right)=0,\quad
\left(x,y,y\right)=0;\\
\label{eq Jord}
\mathrm{Jord}:&\quad
\left[x,y\right]=0,\quad
\left(x^2,y,x\right)=0;\\
\label{eq Malc}
\mathrm{Malc}:&\quad
x\circ y=0,\quad
{\mathrm J}\left(x,y,z\right)x={\mathrm J}\left(x,y,xz\right).
\end{align}
By
$\mathcal V^{(2)}$
we denote a
\textit{subvariety of all metabelian (solvable of index at most~$2$) algebras of a given variety~$\mathcal V$},
i.~e.
$\mathcal V^{(2)}$
is distinguished in
$\mathcal V$
by the identity
\begin{equation}\label{eq Metab}
xy\cdot zt=0.
\end{equation}
By
$\mathrm{NAlt}$
we denote a subvariety of all nilalgebras in
$\mathrm{Alt}$
of index at most~$3$,
i.~e.
$\mathrm{NAlt}$
is distinguished in
$\mathrm{Alt}$
by the identity
\begin{equation}\label{eq Nil3}
x^3=0.
\end{equation}

Let us formulate the results of the paper.
First, in Section~2, we describe the inclusions in the lattices
$\ltt{\mathrm{Alt}^{(2)}}$
and
$\ltt{\mathrm{NAlt}^{(2)}}$
and calculate the basic spectrums of these varieties.
Namely, we prove the following theorems.

\begin{theorem}\label{theorem NilAlt2}
$\bspp{\mathrm{NAlt}^{(2)}}=\bigl\{(0,1)\bigr\}$
and all inclusions
$\mathrm{NAlt}^{(2)}_n\subset\mathrm{NAlt}^{(2)}_{n+1}$
are strict.
\end{theorem}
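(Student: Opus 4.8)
The statement has two essentially independent halves: the computation of the spectrum $\bspp{\mathrm{NAlt}^{(2)}}$ and the strictness of the ordinary chain. The plan is to base both on a structural description of the free $\mathrm{NAlt}^{(2)}$-algebra $F=F(X)$ on a countable set $X=\{x_1,x_2,\dots\}$, and in particular on the multiplicative relations on its square $F^2$. Since $\mathrm{NAlt}^{(2)}$ is metabelian, $F^2\cdot F^2=0$, so for $w\in F^2$ every associator having one argument in $F^2$ collapses: writing $L_x,R_x$ for left and right multiplication by a generator $x$, one gets $\ass{w}{x}{y}=(wx)y=R_yR_x(w)$ and $\ass{x}{y}{w}=-x(yw)=-L_xL_y(w)$. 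Substituting these into the total antisymmetry of the associator (valid in any alternative algebra) yields the Grassmann-type relations
\[
R_xR_y=-R_yR_x,\qquad L_xL_y=-L_yL_x,\qquad R_yR_x=-L_xL_y\quad\text{on }F^2,
\]
together with the mixed rule expressing $R_yL_x$ through $L_xR_y$ and $R_yR_x$. From these, together with the linearization of $x^3=0$, I would extract a monomial basis of each multilinear component $\pna{\mathrm{NAlt}^{(2)}}$: a core $x_ix_j$ acted on by a sign-skew (exterior) product of the remaining left and right multiplications.

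For the strictness $\mathrm{NAlt}^{(2)}_n\subset\mathrm{NAlt}^{(2)}_{n+1}$ I would exhibit, for each $n$, a multilinear non-identity that is alternating in $n+1$ of its variables, namely $g_n=\sum_{\sigma\in S_{n+1}}(\operatorname{sgn}\sigma)\,R_{x_{\sigma(n+1)}}\cdots R_{x_{\sigma(1)}}(yz)$. Because the right multiplications already anticommute on $F^2$, $g_n$ is a nonzero multiple of $R_{x_{n+1}}\cdots R_{x_1}(yz)$ in $F$, so $g_n\notin T(\mathrm{NAlt}^{(2)})$; evaluating on the $n+1$ generators of $F_{n+1}$ gives a product of $n+1$ independent anticommuting operators and keeps it nonzero, so $g_n$ is a non-identity of $\mathrm{NAlt}^{(2)}_{n+1}$. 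On the other hand, in the free algebra $F_n$ of rank $n$ each substituted $R_{z}$ ($z\in F_n$) acts on $F_n^2$ only through the degree-one part of $z$, since $R_z$ annihilates $F_n^2$ when $z\in F_n^2$ by metabelianness; hence all substituted multiplications lie in the $n$-dimensional span of $R_{y_1},\dots,R_{y_n}$, and the alternation of $n+1$ anticommuting operators drawn from an $n$-dimensional space vanishes. Thus $g_n\in T(\mathrm{NAlt}^{(2)}_n)$, the chain is strict for all $n$, $\br{\mathrm{NAlt}^{(2)}}=\aleph_0$, and $\mathrm{NAlt}^{(2)}_n\neq\mathrm{NAlt}^{(2)}$ for every $n$.

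To prove that $(0,1)$ is a basic superrank I must show that $\Gob{F_{0,1}}$, the Grassmann envelope of the free $\mathrm{NAlt}^{(2)}$-superalgebra $F_{0,1}$ on a single odd generator $a$, generates all of $\mathrm{NAlt}^{(2)}$. The mechanism is that the exterior signs in the relations above are reproduced exactly by the anticommuting Grassmann generators $e_i$: setting $u_i=e_i\otimes a\in\Gob{F_{0,1}}$, the assignment $x_i\mapsto u_i$ defines a homomorphism $\phi$ from $F$, and I would show $\phi$ is injective. The essential observation is that when $a$ is odd the superized alternative identities turn the \emph{anti}commutativity of the $R$'s into \emph{commutativity} of the corresponding multiplications inside $F_{0,1}$; consequently $\phi(w)=\pm e_1\cdots e_n\otimes m_w$, where the skewness is carried entirely by the Grassmann factor $e_1\cdots e_n$ while the super-monomial $m_w$ records the remaining symmetric data. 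Matching this against the basis of $\pna{\mathrm{NAlt}^{(2)}}$ from the first paragraph shows that no relation among the $u_i$ beyond the defining identities of $\mathrm{NAlt}^{(2)}$ can hold, so $\phi$ is injective and $\Gob{F_{0,1}}$ contains a copy of $F$, whence $\operatorname{var}\Gob{F_{0,1}}=\mathrm{NAlt}^{(2)}$. This sign bookkeeping — verifying that every superidentity in one odd variable satisfied by $F_{0,1}$ is already forced, so that $\phi$ has no kernel — is the main obstacle.

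Finally I would show that $(0,1)$ is the unique basic superrank. The only elements of $\ltt{\mathrm{NAlt}^{(2)}}$ not lying above $\mathrm{NAlt}^{(2)}_{0,1}$ are the purely even $\mathrm{NAlt}^{(2)}_{r,0}$, for which $\Gob{F_{r,0}}=\mathrm G_0\otimes F_r$; since $\mathrm G_0$ is a commutative associative algebra with unit, multilinearity gives $\operatorname{var}(\mathrm G_0\otimes F_r)=\operatorname{var}(F_r)=\mathrm{NAlt}^{(2)}_r$, a proper subvariety by the second paragraph. Hence $\mathrm{NAlt}^{(2)}_{r,0}\neq\widetilde{\mathrm{NAlt}^{(2)}}$ for every $r$, while each $\mathrm{NAlt}^{(2)}_{r,s}$ with $s\ge 1$ contains $\mathrm{NAlt}^{(2)}_{0,1}$ and so already equals $\widetilde{\mathrm{NAlt}^{(2)}}$ without being minimal; as $\mathrm{NAlt}^{(2)}$ is non-trivial, $(0,0)$ is excluded and $(0,1)$ is itself minimal. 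Therefore $(0,1)$ is the only minimal element with the generation property, giving $\bspp{\mathrm{NAlt}^{(2)}}=\{(0,1)\}$, in accordance with the general bound of at most $r+s+1=2$ spectrum points.
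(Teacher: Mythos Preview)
Your argument for the strict inclusions $\mathfrak N_n\subset\mathfrak N_{n+1}$ does not work. The polynomial $g_n$ has $n+3$ variables $y,z,x_1,\dots,x_{n+1}$ and hence degree $n+3$; to show it is a non-identity of $\mathfrak N_{n+1}$ you must evaluate it on $\svob{\mathfrak N}{X_{n+1}}$, which forces $y$ and $z$ to be chosen among the same $n+1$ generators used for the $x_i$. But the very operator relations you derive (anticommutativity of the $R$'s on $F^2$ together with the Artin/metabelian consequence $(xy)T_xR_y=0$) imply $\bigl(\svob{\mathfrak N}{X_{n+1}}\bigr)^{n+3}=0$, so every such evaluation of $g_n$ vanishes. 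Thus $g_n$ is an identity of $\mathfrak N_{n+1}$ as well as of $\mathfrak N_n$, and it only separates $\mathfrak N_n$ from the whole variety $\mathfrak N$, not from its immediate successor. The paper's proof instead pins down the nilpotency index exactly: $\svob{\mathfrak N}{X_n}$ is nilpotent of index $n+2$, witnessed by the nonvanishing of $x_1^2R_{x_2}\cdots R_{x_n}$; this last nonvanishing, and indeed the non-nilpotency of the free algebra that you tacitly assume when asserting $g_n\neq 0$ in $F$, is not formal but is established via an explicit auxiliary superalgebra.

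That auxiliary superalgebra is also how the paper handles the $(0,1)$ claim, and this is where your acknowledged ``main obstacle'' lies. Rather than proving injectivity of the map $\phi:F\to\Gob{F_{0,1}}$ abstractly, the paper constructs a concrete $\mathfrak N$-superalgebra $\mathcal A$ on one odd generator (a split null extension of dimension at most five, with the odd generator acting via a primitive cube root of unity), writes down a finite spanning set of $\pna{\mathfrak N}$ (two families of ``basis words''), and checks by direct substitution that no nontrivial linear combination of these becomes a superidentity of $\mathcal A$. This simultaneously proves linear independence of the basis words, the equality $\mathfrak N_{0,1}=\widetilde{\mathfrak N}$, and the nonvanishing needed for the nilpotency index. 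Your injectivity heuristic --- that the Grassmann factor absorbs all the skewness while the super-monomial $m_w$ records the shape --- is correct in spirit, but completing it amounts to computing $\dim\pna{\mathfrak N}$ and matching it to the degree-$n$ component of $F_{0,1}$, which is precisely what the concrete $\mathcal A$ and the basis-word calculation accomplish. Your uniqueness argument in the last paragraph is fine.
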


\begin{theorem}\label{theorem Alt2}
For
$\mathcal V=\mathrm{Alt}^{(2)}$
we have
\[
\begin{matrix}
            &          & \mathcal V_{1,0}   & \subset  & \mathcal V_{2,0}   & \subset\cdots\subset & \mathcal V_{r,0}   & \subset  \cdots   \\
            &          &  \cap      &          & \cap       &                      & \cap       &        \\
 \mathcal V_{0,1}   & \subset  & \mathcal V_{1,1}   & =  & \mathcal V_{2,1}   & =\cdots= & \mathcal V_{r,1}   & =\cdots   \\
 \shortmid\shortmid &          &  \shortmid\shortmid      &          & \shortmid\shortmid       &                      & \shortmid\shortmid       &         \\
 \mathcal V_{0,2}   & \subset & \mathcal V_{1,2}   &=  & \mathcal V_{2,2}   & =\cdots= & \mathcal V_{r,2}   & =\cdots  \\
  \cap      &          &  \shortmid\shortmid      &          & \shortmid\shortmid       &                      & \shortmid\shortmid       &         \\
 \mathcal V_{0,3}   & = & \mathcal V_{1,3}   &=  & \mathcal V_{2,3}   & =\cdots= & \mathcal V_{r,3}   & =\cdots  \\
   \shortmid\shortmid       &          &  \shortmid\shortmid      &          & \shortmid\shortmid       &                      & \shortmid\shortmid       &         \\
 \mathcal V_{0,4}   & = & \mathcal V_{1,4}   &=  & \mathcal V_{2,4}   & =\cdots= & \mathcal V_{r,4}   & =\cdots  \\
  \shortmid\shortmid      &          &  \shortmid\shortmid      &          & \shortmid\shortmid       &                      & \shortmid\shortmid       &         \\
\hdotsfor{8}
\end{matrix}
\]
\end{theorem}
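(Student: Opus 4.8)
The plan is to reduce everything to multilinear superpolynomials and then read off all the inclusions from a single structural description of the free $\mathrm{Alt}^{(2)}$-superalgebra. Since $\chr=0$, every superidentity is equivalent to the family of its full linearizations, so for each pair $(p,q)$ it suffices to control the space of multilinear superpolynomials in $p$ even and $q$ odd variables that vanish on the free $\mathrm{Alt}^{(2)}$-superalgebra. The first step is to obtain a normal form for this free object. The metabelian law $xy\cdot zt=0$ forces every nonzero monomial to have at most one compound factor at each multiplication, which already restricts monomials drastically; combining this with the complete skew-symmetry of the associator $(x,y,z)$ valid in every alternative algebra, I would collapse all monomials to a short canonical form and describe each homogeneous multilinear component explicitly as a module, keeping track of the $\mathbb{Z}_2$-parities via the Koszul sign rule.

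From this description the strictness of the top row $\mathcal{V}_{r,0}\subset\mathcal{V}_{r+1,0}$ follows by exhibiting, for each $r$, a multilinear superpolynomial in $r+1$ even arguments that is totally alternating in them and nonzero in the free algebra on $r+1$ generators, yet vanishes under every substitution drawn from an $r$-element generating set because two alternating arguments must then coincide. These are the long associator-type skew elements; the construction is exactly the mechanism behind $\br{\mathrm{Alt}}=\aleph_0$, now realized inside the metabelian quotient, and it shows that the column $s=0$ never reaches $\Tilde{\mathcal{V}}$. The same alternating elements, now seen to survive the passage to one odd variable, give the strict vertical steps $\mathcal{V}_{r,0}\subset\mathcal{V}_{r,1}$.

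The decisive new phenomenon is that odd variables realize this alternation automatically through the sign $(-1)^{ij}$ of the Koszul rule, so that the obstructions distinguishing $\mathcal{V}_{r,0}$ from $\mathcal{V}_{r+1,0}$ can be transferred onto the odd slots. Concretely I would show that any multilinear superidentity holding on the free superalgebra on $(r,s)$ generators with $r\ge1$, $s\ge1$ is already a consequence of superidentities with fewer even variables, because its even-alternating part is absorbed by the odd generators; this yields the stabilization $\mathcal{V}_{1,s}=\mathcal{V}_{2,s}=\cdots$ for $s\ge 1$. Comparing the normal-form descriptions at $q=1$ and $q=2$ gives the identification of the two rows, $\mathcal{V}_{r,1}=\mathcal{V}_{r,2}$ for all $r$ (including $\mathcal{V}_{0,1}=\mathcal{V}_{0,2}$). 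Finally, since the associator is skew in exactly three slots, three odd generators suffice to reproduce the entire alternating structure with no even generators, so I would verify directly that the free $\mathrm{Alt}^{(2)}$-superalgebra on three odd generators satisfies no superidentity failing in $\Tilde{\mathcal{V}}$, giving $\mathcal{V}_{0,3}=\Tilde{\mathcal{V}}$ and the collapse of all rows with $s\ge3$; the strictly smaller count of alternating monomials available on one even or on two odd generators then produces the remaining strict steps $\mathcal{V}_{0,1}\subset\mathcal{V}_{1,1}$, $\mathcal{V}_{0,2}\subset\mathcal{V}_{1,2}$, and $\mathcal{V}_{0,2}\subset\mathcal{V}_{0,3}$, as well as the equality $\mathcal{V}_{1,1}=\Tilde{\mathcal{V}}$ at the top of the lattice.

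The main obstacle will be the bookkeeping in the first step: obtaining a basis of the free alternative metabelian superalgebra precise enough both to exhibit the distinguishing alternating identities for the strict chain in the top row and to prove the exact collapse thresholds, namely that one odd generator stabilizes the even direction while three (rather than two) odd generators already generate $\Tilde{\mathcal{V}}$. These thresholds are dimension counts that must come out exactly, and the nil subvariety $\mathrm{NAlt}^{(2)}$ of Theorem~\ref{theorem NilAlt2} should serve as the guiding model and as the source of the relevant alternating elements, since it isolates precisely the homogeneous components where the associator-driven skew-symmetry lives.
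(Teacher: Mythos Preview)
Your outline---linearize, obtain a normal form for $\pnv$, then read off the lattice---matches the paper's scaffolding, but the decisive ingredient is missing. To prove the equalities $\mathcal V_{1,1}=\Tilde{\mathcal V}$ and $\mathcal V_{0,3}=\Tilde{\mathcal V}$ the paper does not argue intrinsically inside free superalgebras; it constructs explicit small $\mathcal V$-superalgebras $\mathcal A$ (one odd generator), $\mathcal B$ (one even, one odd) and $\mathcal B'$ (three odd), and then shows by direct evaluation that no nontrivial linear combination of the spanning words of $\pnv$ vanishes on $\Gob{\mathcal B}$ or $\Gob{\mathcal B'}$. That single computation simultaneously certifies that the spanning set is a basis and that the corresponding finitely generated superalgebras already generate $\Tilde{\mathcal V}$. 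Your plan replaces this with ``verify directly'' and ``dimension counts that must come out exactly,'' but supplies no mechanism; without an auxiliary model (or an equivalent device) you have no way to rule out extra superidentities of the free $(1,1)$- or $(0,3)$-generated object, and that is exactly where the work lies.

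There is also a misdiagnosis of the threshold $s=3$. It is not that ``the associator is skew in exactly three slots.'' The paper shows that modulo $\pnvv$ the only new basis element in $\pnv$ is the full linearization $\varphi(x_1,x_2,x_3)=\sum_{\sigma\in\mathrm S_3}(x_{\sigma(1)}x_{\sigma(2)})x_{\sigma(3)}$ of $x^3$, which is \emph{symmetric} in three arguments; under superization with odd inputs it becomes alternating, hence vanishes whenever two odd arguments coincide. This is why two odd generators force the $x^3=0$ superidentity (so $\mathcal V_{0,2}\subseteq\Tilde{\mathfrak N}=\mathcal V_{0,1}$, via Theorem~\ref{theorem NilAlt2}) while three do not---one checks $\Tilde\varphi(y,x,z)\neq 0$ in $\mathcal B'$. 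Your associator heuristic does not produce this, and in particular the equality $\mathcal V_{0,1}=\mathcal V_{0,2}$ comes from identifying both with $\Tilde{\mathfrak N}$ rather than from any count of ``alternating monomials.''
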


\begin{corollary}\label{corollary Alt 2}
$\bspp{\mathrm{Alt}^{(2)}}=\bigl\{(1,1),(0,3)\bigr\}$.
\end{corollary}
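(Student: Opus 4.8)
The plan is to read the basic spectrum straight off the lattice $\ltt{\mathrm{Alt}^{(2)}}$ computed in Theorem~\ref{theorem Alt2}, using nothing beyond the definition of basic superrank. Throughout write $\V=\mathrm{Alt}^{(2)}$. By definition $\bsp{\V}$ is the set of those pairs $(r,s)$ that are minimal in $\lt{\V}$ among the pairs satisfying $\mathcal V_{r,s}=\Tilde{\V}$, so the task splits into (a) determining for which $(r,s)$ the variety $\mathcal V_{r,s}$ already coincides with the full supervariety $\Tilde{\V}$, and (b) selecting the minimal such pairs.

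First I would identify $\Tilde{\V}$ with $\mathcal V_{0,3}$. Since $\Tilde{\V}=\bigcup_{r,s}\mathcal V_{r,s}$, it suffices to check that $\mathcal V_{r,s}\subseteq\mathcal V_{0,3}$ for every $(r,s)$, and this follows by running through the relations in the diagram: all entries with $s\geq 3$ collapse to $\mathcal V_{0,3}$; for $s=2$ one has $\mathcal V_{0,2}\subsetneq\mathcal V_{0,3}$ and $\mathcal V_{r,2}=\mathcal V_{r,3}=\mathcal V_{0,3}$ for $r\geq 1$; the rows $s=1$ and $s=0$ then sit below via $\mathcal V_{r,0}\subseteq\mathcal V_{r,1}=\mathcal V_{r,2}$.

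Next I would describe the set $T=\{(r,s):\mathcal V_{r,s}=\Tilde{\V}\}$. The diagram yields three facts: $\mathcal V_{r,s}=\Tilde{\V}$ for all $r\geq 1$ and $s\geq 1$ (the rows $s=1,2$ stabilize horizontally at $\mathcal V_{1,1},\mathcal V_{1,2}$, which are tied vertically to $\mathcal V_{1,3}=\mathcal V_{0,3}$); $\mathcal V_{0,s}=\Tilde{\V}$ precisely when $s\geq 3$, because $\mathcal V_{0,1}=\mathcal V_{0,2}\subsetneq\mathcal V_{0,3}$; and no pair on the bottom row lies in $T$, since $\mathcal V_{r,0}\subsetneq\mathcal V_{r,1}=\Tilde{\V}$ for $r\geq 1$ while $\mathcal V_{0,0}=\bigl\{\{0\}\bigr\}$. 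Hence $T=\{(r,s):r\geq1,\ s\geq1\}\cup\{(0,s):s\geq3\}$, which is an up-set in $\lt{\V}$.

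Finally I would extract the minimal elements of $T$. The pair $(1,1)$ is minimal because the only pairs strictly below it, namely $(0,1),(1,0),(0,0)$, all lie outside $T$; similarly $(0,3)$ is minimal because $(0,0),(0,1),(0,2)\notin T$. Every other element of $T$ dominates one of these two — a pair with $r\geq1,s\geq1$ dominates $(1,1)$, and a pair $(0,s)$ with $s\geq3$ dominates $(0,3)$ — so these are the only minimal elements, and they are incomparable in $\lt{\V}$ since $1>0$ but $1<3$. This gives $\bspp{\mathrm{Alt}^{(2)}}=\bigl\{(1,1),(0,3)\bigr\}$. I expect no genuine obstacle: the entire content is supplied by Theorem~\ref{theorem Alt2}, and the only point requiring care is the minimality bookkeeping — in particular confirming that none of the smaller pairs $(1,0)$, $(0,1)$, $(0,2)$ already generates $\Tilde{\V}$.
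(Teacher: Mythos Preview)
Your proposal is correct and takes essentially the same approach as the paper: the corollary is stated there without proof because it is meant to be read directly off the lattice in Theorem~\ref{theorem Alt2}, exactly as you do. Your bookkeeping is accurate --- in particular the identifications $\mathcal V_{1,1}=\mathcal V_{0,3}=\Tilde{\mathcal V}$ and the strict inclusions $\mathcal V_{r,0}\subset\mathcal V_{r,1}$, $\mathcal V_{0,1}=\mathcal V_{0,2}\subset\mathcal V_{0,3}$ are precisely what the paper establishes in the course of proving the theorem.
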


In Sections~3 and~4,
we describe the inclusions in the lattices
$\ltt{\mathrm{Jord}^{(2)}}$,
$\ltt{\mathrm{Malc}^{(2)}}$
and calculate the basic spectrums
$\bspp{\mathrm{Jord}^{(2)}}$,
$\bspp{\mathrm{Malc}^{(2)}}$,
respectively.

\begin{theorem}\label{theorem Jord2}
For
$\mathcal V=\mathrm{Jord}^{(2)}$
we have
\[
\begin{matrix}
            &          & \mathcal V_{1,0}   & \subset  & \mathcal V_{2,0}   & \subset\cdots\subset & \mathcal V_{r,0}   & \subset  \cdots   \\
            &          &  \cap      &          & \cap       &                      & \cap       &        \\
 \mathcal V_{0,1}   & \subset  & \mathcal V_{1,1}   &\subset   & \mathcal V_{2,1}   & \subset \cdots\subset  & \mathcal V_{r,1}   & \subset \cdots   \\
  \cap      &          &  \cap      &          & \cap       &                      & \cap       &         \\
 \mathcal V_{0,2}   & = & \mathcal V_{1,2}   &=  & \mathcal V_{2,2}   & =\cdots= & \mathcal V_{r,2}   & =\cdots  \\
   \shortmid\shortmid       &          &  \shortmid\shortmid      &          & \shortmid\shortmid       &                      & \shortmid\shortmid       &         \\
 \mathcal V_{0,3}   & = & \mathcal V_{1,3}   &=  & \mathcal V_{2,3}   & =\cdots= & \mathcal V_{r,3}   & =\cdots  \\
  \shortmid\shortmid      &          &  \shortmid\shortmid      &          & \shortmid\shortmid       &                      & \shortmid\shortmid       &         \\
\hdotsfor{8}
\end{matrix}
\]
\end{theorem}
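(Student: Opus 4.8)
The plan is to base everything on an explicit description of the multilinear component of the relatively free metabelian Jordan superalgebra. The commutativity identity $[x,y]=0$ superizes to supercommutativity, the metabelian identity $xy\cdot zt=0$ forces every product of two elements of the derived ideal to vanish, and the Jordan identity $(x^2,y,x)=0$ superizes to a relation controlling associators. First I would observe that, because of the metabelian law, a nonzero multilinear monomial must correspond to a binary tree in which every internal node carries a leaf, and then use supercommutativity together with the superized Jordan identity to reduce these ``caterpillar'' monomials to a canonical basis in each multidegree, tracking the Koszul signs. This normal form, giving a concrete basis of the multilinear superpolynomials modulo the identities of $\mathrm{Jord}^{(2)}$, is the technical backbone of all three parts of the theorem.

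Next I would establish the collapse of the lattice for $s\ge 2$. Since the lattice is ordered by inclusion, it suffices to prove the single equality $\mathcal V_{0,2}=\widetilde{\mathcal V}$: every $\mathcal V_{r,s}$ with $s\ge 2$ is then squeezed between $\mathcal V_{0,2}$ and $\widetilde{\mathcal V}$ and hence equals $\widetilde{\mathcal V}$, which accounts for all the equalities and the identifications between consecutive rows $s\ge 2$. Concretely I must show that every multilinear non-identity of $\widetilde{\mathcal V}$, in any multidegree, remains a non-identity of the free superalgebra on the two odd generators $\xi_1,\xi_2$. The mechanism is that two odd generators already produce a nonzero even element $\xi_1\xi_2$, and, more importantly, nonzero higher monomials mixing $\xi_1,\xi_2$; using the normal form I would select, for each multidegree, a canonical monomial and a substitution of its variables by elements built from $\xi_1,\xi_2$ under which it evaluates to a nonzero element. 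Exhibiting such witnessing substitutions, and proving they survive all the Jordan and metabelian relations once the signs are accounted for, is the crux of the argument.

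For the strict inclusions I would produce explicit separating superidentities directly from the normal form. The conceptual driver is the sharp contrast between one and two odd generators: with at most one odd generator $\xi$ one has $\xi^2=0$, so odd generators yield no nonzero even element and the even slots of a superpolynomial can be filled only from genuinely even data, whereas for $s=2$ the element $\xi_1\xi_2$ is available. This asymmetry is what keeps the horizontal chains in the rows $s=0$ and $s=1$ strictly increasing in the number $r$ of even generators, lets me separate $\mathcal V_{r,0}$ from $\mathcal V_{r,1}$ by a non-identity that genuinely involves an odd variable, and lets me separate $\mathcal V_{r,1}$ from $\mathcal V_{r,2}$ by a non-identity whose non-vanishing requires the even element $\xi_1\xi_2$ and hence two distinct odd generators. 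In each case I would verify, using the normal form, that the chosen superpolynomial is a non-identity of the larger supervariety but an identity of the smaller one.

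The step I expect to be the main obstacle is the realizability claim inside the collapse: showing that two odd generators suffice to witness every multilinear non-identity, of unbounded degree and with unboundedly many even and odd variables. The danger is that forcing many variables onto the few elements available in the two-generator algebra triggers cancellation through the associator and metabelian relations. Overcoming this requires the normal form of the first step to be sharp enough to isolate, in every multidegree, a single canonical monomial that provably evaluates to a nonzero element under an explicit substitution, with all Koszul signs controlled; pinning down that surviving monomial is where the real computation lies.
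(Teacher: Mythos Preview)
Your overall architecture---normal form, then collapse at $s\ge 2$, then separating superidentities---matches the paper's, but the execution strategy diverges in an important way. The paper does not try to witness non-identities inside the free two-generated superalgebra. Instead, for the collapse it builds a concrete $4$-dimensional $\mathcal V$-superalgebra $\mathcal A$ (a split null extension with $a\cdot x=v$, $v\cdot y=a$) generated by two odd elements, and checks that under a single carefully chosen substitution each basis monomial of $\mathcal P_n(\mathcal V)$ evaluates to a nonzero element of $\mathcal A$; this gives $\mathcal V_{0,2}=\widetilde{\mathcal V}$ with almost no computation. Likewise, for the strict inclusions the paper builds an explicit non-nilpotent $\mathcal V$-superalgebra $\mathcal B^{(n)}$ on $n$ even and one odd generators: non-nilpotency immediately separates $\mathcal V_{n,0}$ (where the free algebra is nilpotent of index $2n+2$) from $\mathcal V_{n,1}$, and the polynomial $f_n=(ab)(R_{x_1}\circ R_{x_2})\cdots(R_{x_{2n-1}}\circ R_{x_{2n}})$ is shown, by an inductive combinatorial argument, to be a superidentity of the free $(n-1,1)$-superalgebra while being nonzero on $\mathcal B^{(n)}$. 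The auxiliary-algebra device buys short, verifiable computations; your purely free-superalgebra route is in principle viable but trades that for substantially heavier bookkeeping.

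Where your plan is thinnest is the row $s=1$. The intuition ``$\xi^2=0$, so with one odd generator there is no nontrivial even element'' is correct as far as it goes, but it does not by itself produce a separating superidentity between $\mathcal V_{n-1,1}$ and $\mathcal V_{n,1}$: one still has $n-1$ genuine even generators available, and the issue is to find an identity that needs all $n$ of them together with the odd one. The paper's $f_n$ and the proof of Lemma~3.4 (an induction showing that in the $(n-1,1)$ free superalgebra the two surviving monomials of $\widetilde f_n$ cancel via the relation $R_xR_yR_z=\pm R_zR_yR_x$) is exactly the missing piece you would need to discover; your proposal does not yet indicate what the candidate polynomial is or why it vanishes on the smaller free superalgebra.
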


\begin{corollary}\label{corollary Jord2}
$\bspp{\mathrm{Jord}^{(2)}}=\bigl\{(0,2)\bigr\}$.
\end{corollary}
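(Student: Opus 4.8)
The plan is to read the corollary off the lattice computed in Theorem~\ref{theorem Jord2}; the statement carries no difficulty of its own, so everything reduces to locating the minimal pairs. Put $\V=\mathrm{Jord}^{(2)}$. First I would identify $\Tilde{\V}$. Any superidentity that fails in the free $\V$-superalgebra on countably many even and odd generators already fails on the finite set of variables it involves, hence on some free $\V$-superalgebra on $r$ even and $s$ odd generators; thus $\Tilde{\V}$ is the join of the $\mathcal V_{r,s}$ in $\lt{\V}$. By Theorem~\ref{theorem Jord2} one has $\mathcal V_{r,s}=\mathcal V_{0,2}$ for all $r\geq 0$ and $s\geq 2$, while the general inclusions of $\lt{\V}$ together with the theorem give $\mathcal V_{r,0}\subseteq\mathcal V_{r,1}\subseteq\mathcal V_{r,2}=\mathcal V_{0,2}$. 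Every entry of the lattice is therefore contained in $\mathcal V_{0,2}$, so the join equals its largest element and $\Tilde{\V}=\mathcal V_{0,2}$.

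Next I would check that $(0,2)$ is a basic superrank, i.e.\ a minimal pair (in the product order on pairs) with $\mathcal V_{r,s}=\Tilde{\V}$. The pairs strictly below $(0,2)$ are exactly $(0,0)$ and $(0,1)$. By definition $\mathcal V_{0,0}=\bigl\{\left\{0\right\}\bigr\}\neq\Tilde{\V}$, and the strict vertical inclusion $\mathcal V_{0,1}\subset\mathcal V_{0,2}$ in the first column of Theorem~\ref{theorem Jord2} gives $\mathcal V_{0,1}\neq\Tilde{\V}$. Hence $\mathcal V_{0,2}=\Tilde{\V}$ while no smaller pair realizes $\Tilde{\V}$, so $(0,2)$ is minimal with this property.

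Finally I would rule out any other basic superrank. Suppose $\mathcal V_{r,s}=\Tilde{\V}$. If $s\leq 1$, then $\mathcal V_{r,s}\subseteq\mathcal V_{r,1}\subset\mathcal V_{r,2}=\Tilde{\V}$ by Theorem~\ref{theorem Jord2}, forcing $\mathcal V_{r,s}\neq\Tilde{\V}$, a contradiction; so $s\geq 2$ and $(0,2)\leq(r,s)$. Since $(0,2)$ already realizes $\Tilde{\V}$, minimality of $(r,s)$ forces $(r,s)=(0,2)$, and therefore $\bspp{\mathrm{Jord}^{(2)}}=\bigl\{(0,2)\bigr\}$. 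I expect no real obstacle here: the entire content lies in Theorem~\ref{theorem Jord2}, and the only point that genuinely matters for the bookkeeping is the strictness of the step $\mathcal V_{r,1}\subset\mathcal V_{r,2}$, which is precisely what forbids any pair with a single odd generator from generating $\Tilde{\V}$.
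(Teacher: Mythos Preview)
Your argument is correct and is exactly the intended reading of the corollary off the lattice of Theorem~\ref{theorem Jord2}: the paper itself gives no separate proof of the corollary, and the accompanying remark singles out precisely the two ingredients you use, namely $\mathcal V_{0,2}=\Tilde{\mathcal V}$ and the strict inclusions $\mathcal V_{r,1}\subset\Tilde{\mathcal V}$ for all $r$. Your extra step of identifying $\Tilde{\mathcal V}$ with the join of the $\mathcal V_{r,s}$ is harmless bookkeeping but not needed, since $\mathcal V_{0,2}=\Tilde{\mathcal V}$ is established directly in the course of proving the theorem.
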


\begin{theorem}\label{theorem Malc2}
For
$\mathcal V=\mathrm{Malc}^{(2)}$
we have
\[
\begin{matrix}
                 &           & \mathcal V_{1,0}   & \subset    & \mathcal V_{2,0}            & \subset\cdots\subset      & \mathcal V_{r,0}           & \subset  \cdots \\
                 &           & \cap               &            & \cap                        &                           & \cap                       &                 \\
\mathcal V_{0,1} & \subset   & \mathcal V_{1,1}   & =          & \mathcal V_{2,1}            & =\cdots=                  & \mathcal V_{r,1}           & =\cdots         \\
\cap             &           & \shortmid\shortmid &            & \shortmid\shortmid          &                           & \shortmid\shortmid         &                 \\
\mathcal V_{0,2} & \subset   & \mathcal V_{1,2}   & =          & \mathcal V_{2,2}            & =\cdots=                  & \mathcal V_{r,2}           & =\cdots         \\
\cap             &           & \shortmid\shortmid &            & \shortmid\shortmid          &                           & \shortmid\shortmid         &                 \\
\vdots           &           & \vdots             &            & \vdots                      &       \ddots              & \vdots                     &  \hfill         \\
\cap             &           & \shortmid\shortmid &            & \shortmid\shortmid          &                           & \shortmid\shortmid         &                 \\
\mathcal V_{0,s} & \subset   & \mathcal V_{1,s}   & =          &\mathcal V_{2,s}             & =\cdots=                  &\mathcal V_{r,s}            & =\cdots \\
\cap             &           & \shortmid\shortmid &            & \shortmid\shortmid          &                           & \shortmid\shortmid         &                 \\
\hdotsfor{8}
\end{matrix}
\]
\end{theorem}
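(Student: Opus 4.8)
The plan is to follow the scheme already used for $\mathrm{Jord}^{(2)}$: reduce the problem to a combinatorial description of the multilinear components of the relevant free (super)algebras and then read off every inclusion of $\ltt{\mathrm{Malc}^{(2)}}$ from evaluations in Grassmann envelopes. First I would fix a normal form for the free $\mathrm{Malc}^{(2)}$-algebra. Since the product is anticommutative and $A^2A^2=0$ by~\eqref{eq Metab}, any monomial whose right factor has degree~$\ge 2$ either is a product of two products, hence vanishes, or carries a single generator on the right that anticommutativity pushes outward; iterating, every element becomes a combination of left-normed monomials $z_1z_2\cdots z_n=(\cdots((z_1z_2)z_3)\cdots)z_n$. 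I would then expand the Malcev identity~\eqref{eq Malc} modulo~\eqref{eq Metab} to list the resulting relations among left-normed monomials and extract a basis of $\pna{\mathrm{Malc}^{(2)}}$. Repeating the computation for parity-graded letters under the Koszul rule gives a normal form for the free $\mathrm{Malc}^{(2)}$-superalgebra $F_{r,s}$ on $r$ even and $s$ odd generators: its even part is anticommutative and its odd part is commutative, and I would record exactly which left-normed super-monomials survive as a function of the parities of the letters.

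The second step is to translate the equality $\V_{r,s}=\Tilde\V$ into a statement about evaluations: a multilinear identity lies in $\V_{r,s}$ iff each of its superizations vanishes under all substitutions of the variables by generators of $\Gob{F_{r,s}}$ of matching parity. Here the decisive feature is an asymmetry between the two kinds of generators. A single odd super-generator $y$ yields the infinitely many elements $e_1\otimes y,e_2\otimes y,\dots$ of the envelope; these pairwise anticommute and square to zero, so one odd generator already realizes arbitrarily many independent odd-parity letters, and, the odd part being commutative, it also produces the even elements $e_ke_l\otimes y^2$, but only of Grassmann degree~$\ge 2$. An even super-generator $x$, by contrast, supplies the Grassmann-cheap element $1\otimes x$ of Grassmann degree~$0$. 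This difference of ``Grassmann budget'' is what governs the whole lattice.

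With the normal form available I would settle the equalities first. To see $\V_{1,1}=\Tilde\V$ it suffices to check that any basis super-monomial with a nonzero value is already nonzero in $\Gob{F_{1,1}}$: the single odd generator covers all odd-parity letters through its distinct copies $e_k\otimes y$, while the single cheap letter $1\otimes x$ fills the even slot that, by the metabelian normal form, no odd generator can supply cheaply, so no value is lost. One then verifies that neither a second even generator nor a further odd generator enlarges the set of realizable values, so the whole region $r\ge1,\ s\ge1$ collapses onto $\V_{1,1}$; this gives the stated equalities, the basic superrank $(1,1)$, and $\bspp{\mathrm{Malc}^{(2)}}=\{(1,1)\}$.

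It remains to produce the strict inclusions by means of explicit separating (super)identities. In the row $s=0$ one has $\V_{r,0}=\bigl(\mathrm{Malc}^{(2)}\bigr)_r$, and a multilinear polynomial alternating in its $r+1$ variables is an identity of every $r$-generated algebra, whereas the normal form supplies one that is nonzero on $r+1$ free generators; this yields $\V_{r,0}\subsetneq\V_{r+1,0}$, recovers $\br{\mathrm{Malc}^{(2)}}=\aleph_0$, and, since one odd generator adds an extra letter, also gives $\V_{r,0}\subsetneq\V_{r,1}$. In the column $r=0$ the point is that, lacking a cheap even letter, any purely-odd realization of the essential even slot must spend Grassmann degree~$\ge 2$, forcing a symmetry that $s$ odd generators can relieve only partially; encoding this as identities $h_s$ that hold in $\V_{0,s}$ but fail once an $(s+1)$-st odd generator or a single even generator is present yields at once the strict chain $\V_{0,s}\subsetneq\V_{0,s+1}$ and the inclusions $\V_{0,s}\subsetneq\V_{1,s}$. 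I expect the main obstacle to be exactly this last analysis: pinning down the relations that the super-Malcev identity imposes on purely-odd left-normed monomials in the metabelian setting, and from them writing down the $h_s$ together with the nonzero evaluations that certify each failure.
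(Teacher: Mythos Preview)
Your overall scheme matches the paper's: fix a basis of $\pna{\V}$ from the Sagle identity modulo metability, prove $\V_{1,1}=\Tilde{\V}$ by showing no nontrivial combination of basis words vanishes, then exhibit separating (super)identities for the two strict chains. The paper, however, never works directly with the free superalgebra $F_{r,s}$. Instead it constructs small explicit witnesses --- a five-dimensional $\V$-superalgebra $\A$ on one even and one odd generator (Lemma~\ref{lemma-A-Malc2super}) for the equality $\V_{1,1}=\Tilde\V$, and split null extensions $A_n$, $\Bar{\A}^{(n+1)}$ built from Grassmann algebras for the strict inclusions --- and verifies nonvanishing by direct substitution. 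This reduces each step to a finite linear-algebra check, whereas your plan would require a complete structural description of $F_{1,1}$.

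There is a genuine gap in your treatment of the strict chains. The assertion that ``a multilinear polynomial alternating in its $r+1$ variables is an identity of every $r$-generated algebra'' is false for metabelian algebras in general: after reducing by multilinearity and metability you still face the substitution in which $r$ variables are distinct generators and the last is a product $w=(x_ix_j)\tau$, and the resulting value $w\chi_r$, with $\chi_r=\sum_{\sigma}(-1)^{|\sigma|}R_{x_{\sigma(1)}}\cdots R_{x_{\sigma(r)}}$, has no a priori reason to vanish. What kills it in $\mathrm{Malc}^{(2)}$ is a \emph{symmetry} forced by the Sagle identity modulo metability,
\[
(xy)\,\rho\,R_x\,\eta\,R_y=(xy)\,\rho\,R_y\,\eta\,R_x,
\]
so that $\chi_r$ acts on $w$ symmetrically in $x_i,x_j$ while being alternating by construction (this is exactly Lemma~\ref{lemma identVn}, and it is not a formality). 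The odd column works by the same mechanism: the superized relations $(xy)\rho R_x\eta R_y=(xy)\rho R_y\eta R_x$ and $x^2\rho R_x\eta R_y=x^2\rho R_y\eta R_x$ for odd $x,y$ (Proposition~\ref{proposition RelMalcSvobSuper}) are precisely the ``relations that the super-Malcev identity imposes on purely-odd monomials'' you flag as the main obstacle, and they force the \emph{symmetrized} polynomial $g_n=(ab)\sum_{\sigma}R_{x_{\sigma(1)}}\cdots R_{x_{\sigma(n)}}$ to be a superidentity of $\V_{0,n}$ (Lemma~\ref{lemma SupIdentV0n}). So the separating identities on the odd side are symmetrizations, and their vanishing is driven by this Malcev-specific symmetry rather than by the Grassmann-degree bookkeeping your $h_s$ heuristic relies on.
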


\begin{corollary}\label{corollary Malc2}
$\bspp{\mathrm{Malc}^{(2)}}=\bigl\{(1,1)\bigr\}$.
\end{corollary}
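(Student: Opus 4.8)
The plan is to read the corollary directly off the lattice description supplied by Theorem~\ref{theorem Malc2}, so that the only real content is checking that the displayed diagram pins down a single minimal cell equal to $\Tilde{\mathcal V}$. First I would record what the symbols in that diagram mean: in the region $r\ge 1$, $s\ge 1$ every horizontal $=$ and every vertical $\shortmid\shortmid$ forces $\mathcal V_{r,s}=\mathcal V_{1,1}$, whereas the $\subset$'s along the top row ($s=0$) and the $\subset$'s and $\cap$'s along the left column ($r=0$) and the top-to-second row record \emph{strict} inclusions. Thus the collection $\{(r,s):\mathcal V_{r,s}=\mathcal V_{1,1}\}$ contains the whole quadrant $r\ge 1,\,s\ge 1$.

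Next I would identify $\Tilde{\mathcal V}$ with $\mathcal V_{1,1}$. Since $\Tilde{\mathcal V}=\bigcup_{r,s}\mathcal V_{r,s}$ and the lattice is monotone in each index, any $\mathcal V_{r,0}$ lies in $\mathcal V_{r,1}=\mathcal V_{1,1}$ and any $\mathcal V_{0,s}$ lies in $\mathcal V_{1,s}=\mathcal V_{1,1}$; combined with $\mathcal V_{r,s}=\mathcal V_{1,1}$ for $r,s\ge 1$, this collapses the union to $\mathcal V_{1,1}$. Hence $\mathcal V_{1,1}=\Tilde{\mathcal V}$, so $(1,1)$ is a pair with the defining property of a basic superrank.

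Then I would verify that $(1,1)$ is the \emph{unique} minimal such pair. The pairs strictly below $(1,1)$ in the componentwise order are $(0,0)$, $(1,0)$, $(0,1)$: the first gives the zero variety, and the other two give proper subvarieties because the strict vertical inclusions $\mathcal V_{1,0}\subset\mathcal V_{1,1}$ and $\mathcal V_{0,1}\subset\mathcal V_{1,1}$ show $\mathcal V_{1,0}\ne\Tilde{\mathcal V}$ and $\mathcal V_{0,1}\ne\Tilde{\mathcal V}$; hence $(1,1)$ is minimal. It remains to exclude any other minimal element, i.e.\ any pair incomparable to $(1,1)$, and these are exactly the $(r,0)$ with $r\ge 2$ and the $(0,s)$ with $s\ge 2$. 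But the strict top-row inclusions $\mathcal V_{r,0}\subset\mathcal V_{r,1}=\Tilde{\mathcal V}$ and the strict left-column inclusions $\mathcal V_{0,s}\subset\mathcal V_{1,s}=\Tilde{\mathcal V}$ show none of these equals $\Tilde{\mathcal V}$, so none is even in the set. Therefore $\bspp{\mathrm{Malc}^{(2)}}=\{(1,1)\}$.

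Granting Theorem~\ref{theorem Malc2}, there is no substantive obstacle: the argument is purely the combinatorics of extracting the minimal ``$=\Tilde{\mathcal V}$'' element from the Hasse-type diagram. The one point deserving care is to confirm that the boundary inclusions are genuinely strict, so that no $(r,0)$ or $(0,s)$ can sneak in as a competing minimal superrank; but this strictness is precisely what the $\subset$ and $\cap$ symbols in the statement of Theorem~\ref{theorem Malc2} already encode, so it is inherited rather than reproved here.
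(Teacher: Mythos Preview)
Your proposal is correct and is essentially the paper's own approach: the corollary is stated as an immediate consequence of Theorem~\ref{theorem Malc2}, and the paper gives no separate argument beyond the lattice diagram (indeed, Remark~4.2 notes that even less of the diagram would suffice, namely $\mathcal V_{1,1}=\Tilde{\mathcal V}$ together with the strict inclusions $\mathcal V_{n}\subset\mathcal V$ and $\mathcal V_{0,n}\subset\Tilde{\mathcal V}$). Your explicit verification that no $(r,0)$ or $(0,s)$ can compete with $(1,1)$ is just a careful unpacking of this.
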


\smallskip

Further, let
$\mathfrak M$
be the
\textit{variety of all metabelian algebras.}
In Section~5, we describe the inclusions in the lattice
$\lt{\mathfrak M}$
and provide an example of variety of unique arbitrary given finite basic superrank.

\begin{theorem}\label{theorem MetabInclusions}
The inclusion
$\mathfrak M_{r',s'}\subseteq\mathfrak M_{r,s}$
holds only if
$r'\leqslant r$
and
$s'\leqslant s$.
The equality
$\mathfrak M_{r',s'}=\mathfrak M_{r,s}$
takes place only if
$r'=r$
and
$s'=s$.
\end{theorem}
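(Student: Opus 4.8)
The plan is to reduce the statement to two non-inclusion assertions and then to separate the supervarieties by explicit graded identities. The equality claim follows at once from the inclusion claim: if $\mathfrak M_{r',s'}=\mathfrak M_{r,s}$ then both inclusions hold, forcing $r'\le r,\ s'\le s$ and $r\le r',\ s\le s'$. So it suffices to prove that $\mathfrak M_{r',s'}\subseteq\mathfrak M_{r,s}$ implies $r'\le r$ and $s'\le s$. Since we work over a field of characteristic $0$, a supervariety is determined by its multilinear graded identities, so $\mathfrak M_{r',s'}\subseteq\mathfrak M_{r,s}$ holds if and only if every graded identity of the free metabelian superalgebra $\mathcal F=\mathcal F(r,s)$ on $r$ even and $s$ odd generators also holds in $\mathcal F(r',s')$. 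I would therefore exhibit, for each $(r,s)$, one identity detecting the number of even generators and one detecting the number of odd generators. Throughout I use the monomial normal form of a free metabelian superalgebra: by the defining law $xy\cdot zt=0$, i.e.\ $A^2A^2=0$, its nonzero monomials are the bracketings in which every internal node has a leaf as one of its two children, and the only length-one elements of the even (resp.\ odd) component are the $r$ even (resp.\ $s$ odd) free generators.

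For the even generators, consider the multilinear, alternating expression
\[
P_r \;=\;\sum_{\sigma\in S_{r+1}}\mathrm{sgn}(\sigma)\,
\bigl(\cdots\bigl((cd)u_{\sigma(0)}\bigr)u_{\sigma(1)}\cdots u_{\sigma(r)}\bigr),
\]
in the even variables $c,d,u_0,\dots,u_r$, whose underlying monomial is the left-normed product seeded by the factor $cd$. The purpose of the seed is that every alternated variable $u_i$ occurs as a child of a node whose other factor contains $cd$ and hence lies in $A^2$; consequently, substituting any $u_i$ by a monomial of length $\ge 2$ turns that node into an element of $A^2A^2=0$. Thus, evaluating $P_r$ on $\mathcal F(r,s)$, only substitutions sending every $u_i$ to a single even generator survive, and there are merely $r$ of these; by the pigeonhole principle two of the $u_i$ receive equal values and the alternation vanishes. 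Hence $P_r$ is a graded identity of $\mathfrak M_{r,s}$ for every $s$. On the other hand, in $\mathcal F(r',s')$ with $r'\ge r+1$ one may substitute $u_0,\dots,u_r$ by distinct even generators (reusing two of them for $c,d$); the resulting $(r+1)!$ left-normed monomials are pairwise distinct basis elements, so $P_r$ evaluates to a nonzero element and is not an identity there.

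An entirely symmetric construction handles the odd generators. Let
\[
Q_s \;=\;\sum_{\tau\in S_{s+1}}\mathrm{sgn}(\tau)\,
\bigl(\cdots\bigl((c'd')v_{\tau(0)}\bigr)v_{\tau(1)}\cdots v_{\tau(s)}\bigr),
\]
where now $v_0,\dots,v_s$ are odd variables and the seed $c'd'$ is a product of two odd generators. The same mechanism applies, since both the vanishing $A^2A^2=0$ and the vanishing of an alternation on coinciding arguments are insensitive to parity: only substitutions sending every $v_i$ to a single odd generator survive, and there are only $s$ of them, so $Q_s$ is a graded identity of $\mathfrak M_{r,s}$ for every $r$, while it fails in $\mathcal F(r',s')$ as soon as $s'\ge s+1$. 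Finally, assuming $\mathfrak M_{r',s'}\subseteq\mathfrak M_{r,s}$, both $P_r$ and $Q_s$ must hold in $\mathcal F(r',s')$; the first forces $r'\le r$ and the second forces $s'\le s$, which is the assertion.

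The main obstacle is the verification that $P_r$ and $Q_s$ genuinely hold on $\mathcal F(r,s)$. The delicate point is that the even component of a free metabelian superalgebra is not spanned by the even generators alone — it also contains products of pairs of odd generators — so a naive alternation over even slots would be controlled by $r+\binom{s}{2}$ rather than by $r$. Seeding the monomial by a product, together with $A^2A^2=0$, is precisely what annihilates every substitution of an alternated variable by such an even element of $A^2$, reducing the surviving substitutions to single generators and restoring the pigeonhole count. Making this rigorous requires the explicit monomial basis of the free metabelian superalgebra and a careful check that each alternated variable sits opposite a factor lying in $A^2$; these structural facts, rather than the signs of the superization, are what carry the proof.
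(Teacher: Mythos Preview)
Your proof is correct and takes a genuinely simpler route than the paper's. Both arguments reduce the theorem to exhibiting separating graded identities, but the constructions differ substantially. The paper works with ordinary polynomials and their superizations: it builds, via Young tableaux, polynomials $f_k$ of the form $(uv)\sum_{\rho,\sigma}(\pm)R_{x_{\cdots}}\cdots R_{x_{\cdots}}$ over rectangular $r\times k$ (resp.\ $k\times s$) tables with $k=rs+1$, invokes combinatorial Lemmas~5.1--5.2 to show the superization vanishes on $\mathfrak M_{r-1,s}$ (resp.\ $\mathfrak M_{r,s-1}$), and then constructs explicit non-nilpotent split null extensions $A^{(r)}$, $\mathcal A^{(s)}$ on which $f_k$ (resp.\ $\tilde f_k$) does not vanish. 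The size of the paper's separating polynomial thus depends on both $r$ and $s$.

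Your approach sidesteps all of this by working directly with graded polynomials of fixed parity. Because the free $\mathfrak M$-superalgebra on $r$ even and $s$ odd generators is just the free metabelian algebra on $r+s$ letters with the obvious grading (the superization of $(xy)(zt)=0$ introduces no signs), your seeded alternating polynomials $P_r$ and $Q_s$ are identities of $\mathcal F(r,s)$ by a one-line pigeonhole argument: the seed forces each alternated slot to be a single generator, and there are too few of the right parity. A single $P_r$ separates $\mathfrak M_{r,s}$ from $\mathfrak M_{r+1,s'}$ for \emph{all} $s,s'$, and dually for $Q_s$; moreover the free superalgebra itself witnesses non-vanishing, so no auxiliary algebras are needed. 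What the paper's heavier machinery buys is a framework (the endomorphisms $\varphi_{\tau d}$, $\psi_{\tau d}$) that it reuses in Section~6 for the right alternative and right symmetric examples; your argument is more economical for this particular theorem but less portable.
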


\begin{corollary}\label{corollary MetabInfin}
The  basic superrank of $\mathfrak M$ is infinite.
\end{corollary}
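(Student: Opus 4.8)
The plan is to deduce this as a formal consequence of Theorem~\ref{theorem MetabInclusions} by a short contradiction argument, so I expect no genuine obstacle here---the real content sits in the theorem itself. By definition, the basic superrank of $\M$ is infinite precisely when $\M_{r,s}\neq\Tilde{\M}$ for every pair $(r,s)$, where $\Tilde{\M}$ is the supervariety of all metabelian superalgebras. Accordingly, I would assume toward a contradiction that $\M$ possesses some finite basic superrank, i.e.\ that $\M_{r,s}=\Tilde{\M}$ for some $(r,s)$, and derive an impossibility.

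The key step is to compare $\M_{r,s}$ with $\M_{r+1,s}$. On the one hand, $\M_{r+1,s}$ is generated by a free $\M$-superalgebra and hence consists entirely of $\M$-superalgebras, so $\M_{r+1,s}\subseteq\Tilde{\M}=\M_{r,s}$ under the assumption. On the other hand, the monotonicity built into the lattice $\lt{\M}$---adding a generator enlarges the generating free superalgebra, and hence the supervariety it generates---always yields $\M_{r,s}\subseteq\M_{r+1,s}$. Combining the two inclusions forces the equality $\M_{r,s}=\M_{r+1,s}$.

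Finally I would invoke the equality clause of Theorem~\ref{theorem MetabInclusions}, according to which $\M_{r',s'}=\M_{r,s}$ can hold only when $r'=r$ and $s'=s$. Applied with $(r',s')=(r+1,s)$ this gives $r+1=r$, a contradiction. Hence no finite pair $(r,s)$ satisfies $\M_{r,s}=\Tilde{\M}$, and the basic superrank of $\M$ is infinite. The only points requiring care are the two routine facts used above---that $\Tilde{\M}$ is the directed union of the $\M_{r,s}$, and that the lattice inclusions are monotone in each coordinate---both of which are immediate from the definitions of $\Tilde{\M}$ and $\lt{\M}$, so the argument is essentially book-keeping once the theorem is in hand.
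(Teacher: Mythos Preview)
Your proposal is correct and is exactly the approach the paper takes: the paper simply states that Corollary~\ref{corollary MetabInfin} is an immediate consequence of Theorem~\ref{theorem MetabInclusions}, and your argument spells out this deduction. One could shorten it slightly by invoking the inclusion clause of the theorem directly (from $\M_{r+1,s}\subseteq\Tilde{\M}=\M_{r,s}$ conclude $r+1\leqslant r$), but this is a matter of taste.
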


\begin{corollary}\label{corollary MetabArbit}
For an arbitrary pare
$(r,s)\neq(0,0)$
of nonnegative integers,
the variety of algebras generated by the Grassmann envelope of the free
$\mathfrak M_{r,s}\text{-superalgebra}$
has the unique basic superrank~$(r,s)$.
\end{corollary}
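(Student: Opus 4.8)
Write $A$ for the free $\M_{r,s}$-superalgebra of the statement, so that $\mathrm{svar}(A)=\M_{r,s}$ is the supervariety it generates, and put $\mathcal W=\vr\Gob{A}$. The plan is to prove first that $\Tilde{\mathcal W}=\M_{r,s}$, and then, invoking Theorem~\ref{theorem MetabInclusions}, to determine exactly which pairs $(r',s')$ satisfy $\mathcal W_{r',s'}=\Tilde{\mathcal W}$; this will show that the only minimal such pair is $(r,s)$.

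First I would establish $\Tilde{\mathcal W}=\M_{r,s}$. For a superalgebra $B$ one has $B\in\Tilde{\mathcal W}$ iff $\Gob{B}\in\mathcal W=\vr\Gob{A}$, i.e. iff $\Gob{B}$ satisfies every multilinear identity of $\Gob{A}$. By the superization correspondence recalled in the introduction, $\Gob{B}\models f\iff B\models\Tilde{f}$ and $\Gob{A}\models f\iff A\models\Tilde{f}$; since $\chr=0$, the superidentities $\Tilde{f}$ arising from multilinear $f$ exhaust all superidentities. Hence $B\in\Tilde{\mathcal W}$ iff $B$ satisfies every superidentity of $A$, that is iff $B\in\mathrm{svar}(A)=\M_{r,s}$. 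Thus $\Tilde{\mathcal W}=\M_{r,s}$.

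Next I would place $\mathcal W_{r',s'}$ in the grid $\lt{\M}$. The free $\mathcal W$-superalgebra on $r'$ even and $s'$ odd generators is the relatively free superalgebra $B_{r',s'}$ of $\Tilde{\mathcal W}=\M_{r,s}$ on those generators, so $\mathcal W_{r',s'}=\mathrm{svar}(B_{r',s'})$. Two inclusions hold for every $(r',s')$: since $B_{r',s'}\in\M_{r,s}$ we get $\mathcal W_{r',s'}\subseteq\M_{r,s}$, and since $B_{r',s'}$ is a homomorphic image of the free $\M$-superalgebra on $r'$ even and $s'$ odd generators it inherits all of the latter's superidentities, giving $\mathcal W_{r',s'}\subseteq\M_{r',s'}$. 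For the reverse bound assume $r'\geqslant r$ and $s'\geqslant s$: the subalgebra of $B_{r',s'}$ generated by $r$ of the even and $s$ of the odd free generators coincides with the free $\M$-superalgebra on $r$ even and $s$ odd generators (the relations of $\M_{r,s}$ impose nothing extra on it, as that algebra already generates $\M_{r,s}$), which generates $\M_{r,s}$; since a subalgebra inherits the superidentities of the whole, $\M_{r,s}\subseteq\mathrm{svar}(B_{r',s'})=\mathcal W_{r',s'}$. Combined with $\mathcal W_{r',s'}\subseteq\M_{r,s}$ this yields $\mathcal W_{r',s'}=\M_{r,s}=\Tilde{\mathcal W}$ whenever $r'\geqslant r$ and $s'\geqslant s$.

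Finally I would conclude. Conversely, if $\mathcal W_{r',s'}=\Tilde{\mathcal W}=\M_{r,s}$, then $\mathcal W_{r',s'}\subseteq\M_{r',s'}$ forces $\M_{r,s}\subseteq\M_{r',s'}$, whence $r\leqslant r'$ and $s\leqslant s'$ by Theorem~\ref{theorem MetabInclusions}. Therefore the pairs with $\mathcal W_{r',s'}=\Tilde{\mathcal W}$ are precisely those with $r'\geqslant r$ and $s'\geqslant s$, whose unique minimal element is $(r,s)$; hence $(r,s)$ is the only basic superrank and $\bsp{\mathcal W}=\bigl\{(r,s)\bigr\}$. The single delicate step is the identity $\Tilde{\mathcal W}=\M_{r,s}$, which rests entirely on the multilinear superization correspondence in characteristic $0$; the remaining bookkeeping — that enlarging the generating set keeps $B_{r',s'}$ inside $\M_{r,s}$ while restricting it keeps it inside $\M_{r',s'}$ — is soft, and Theorem~\ref{theorem MetabInclusions} then supplies the order relations that pin down the spectrum.
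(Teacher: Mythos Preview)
Your proof is correct and follows essentially the same approach as the paper: establish $\Tilde{\mathcal W}=\mathfrak M_{r,s}$, observe that $\mathcal W_{r',s'}\subseteq\mathfrak M_{r,s}\cap\mathfrak M_{r',s'}$, and then invoke Theorem~\ref{theorem MetabInclusions} to rule out any $(r',s')$ with $r'<r$ or $s'<s$. You are somewhat more explicit than the paper in justifying $\Tilde{\mathcal W}=\mathfrak M_{r,s}$ via the superization correspondence and in verifying that $(r,s)$ itself lies in the spectrum, but the logical structure is the same.
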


In Section~6, we provide some examples of varieties of  nearly associative algebras of infinite basic superrank.
For
$\varepsilon=\pm1$,
by
$\Ve$
denote a subvariety of
$\mathfrak M$
distinguished by the identities
\begin{align}
&\left(x,y,z\right)=\varepsilon\left(x,z,y\right),\label{eq EpsilonSymm}\\
&\bigl<\left<x,y\right>_{\varepsilon},z\bigr>_{\varepsilon}=0,\label{eq EpsilonNil2}
\end{align}
where
$\left<x,y\right>_{\varepsilon}=xy-\varepsilon yx$.
Thus,
$\mathcal V^{\left<+1\right>}$
is the \textit{variety of metabelian right symmetric algebras that are Lie-nilpotent of index at most $2$}
and
$\mathcal V^{\left<-1\right>}$
is the \textit{variety of metabelian right alternative algebras that are Jordan-nilpotent of index at most $2$.}

\begin{theorem}\label{theorem VeInfin}
The basic superrank of
$\Ve$
is infinite.
\end{theorem}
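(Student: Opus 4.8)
The plan is to reduce the statement to an identity‑theoretic criterion and then produce, for every pair $(r,s)$, a separating multilinear polynomial. Writing $F_{r,s}$ for the free $\Ve$-superalgebra on $r$ even and $s$ odd generators, the correspondence between superidentities and ordinary identities of Grassmann envelopes recalled in Section~1 gives $\Ve_{r,s}=\Tilde{\Ve}$ if and only if $\Gob{F_{r,s}}$ generates the whole variety $\Ve$, i.e. $\mathrm{Id}(\Gob{F_{r,s}})=\mathrm{Id}(\Ve)$. Hence it suffices to exhibit, for each $(r,s)$, a multilinear $f$ with $f\in\mathrm{Id}(\Gob{F_{r,s}})$ but $f\notin\mathrm{Id}(\Ve)$; this forces $\Ve_{r,s}\subsetneq\Tilde{\Ve}$ for all $(r,s)$, which is exactly the infinitude of the basic superrank.

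The backbone is a normal form for the relatively free algebra $A$ of $\Ve$. Since $\Ve\subseteq\M$, by \eqref{eq Metab} the ideal $A^2$ is abelian and is a bimodule over $A/A^2$. I would first show that \eqref{eq EpsilonSymm} forces the right multiplications by generators to satisfy $R_xR_y=\varepsilon R_yR_x$ on $A^2$ (they commute for $\varepsilon=+1$, and anticommute with $R_x^2=0$ for $\varepsilon=-1$, using $A^2A^2=0$ to kill the terms $m\cdot xy$), while \eqref{eq EpsilonNil2} makes each $\left<x,y\right>_{\varepsilon}$ central up to the sign $\varepsilon$. Combined with the relation expressing $L_xL_y$ through $L_x$ and the $R_z$, these reduce every monomial to a core times a word of right multiplications and yield a basis of $A^2$ of the shape $c\,x_{i_1}\cdots x_{i_k}$ with $c$ a core of degree $2$ or $3$. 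Passing to $F_{r,s}$, the right‑multiplication operators on $F_{r,s}^2$ then generate $S(V_0)\otimes\Lambda(V_1)$ for $\varepsilon=+1$ and $\Lambda(V_0)\otimes S(V_1)$ for $\varepsilon=-1$, where $\dim V_0=r$, $\dim V_1=s$; in each case one tensor factor is a finite‑dimensional exterior algebra whose top degree equals the number of generators of that parity.

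For $\varepsilon=-1$ this immediately produces the witness. I would take the standard polynomial in the right‑multiplication slots,
\[
 f_N=\sum_{\sigma\in S_N}\mathrm{sgn}(\sigma)\,\bigl(\cdots((x_0x_0')x_{\sigma(1)})x_{\sigma(2)}\cdots x_{\sigma(N)}\bigr).
\]
As right multiplications anticommute and square to zero, in the free $\V^{\left<-1\right>}$-algebra one has $f_N=N!\,(x_0x_0')x_1\cdots x_N\neq0$, so $f_N\notin\mathrm{Id}(\V^{\left<-1\right>})$. On the other hand its superization is the super‑standard polynomial, and by the structural lemma the even right multiplications span an exterior algebra on $r$ generators while the odd ones commute; a nonzero super‑standard monomial can therefore use at most $r$ distinct even and at most one odd multiplier, so $f_N$ vanishes on $F_{r,s}$ once $N\geqslant r+2$. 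Thus $f_{r+2}\in\mathrm{Id}(\Gob{F_{r,s}})\setminus\mathrm{Id}(\V^{\left<-1\right>})$ for every $(r,s)$, settling this case.

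The case $\varepsilon=+1$ is where the difficulty lies: right multiplications now commute, every full alternation of degree $\geqslant 3$ is already an identity of $\V^{\left<+1\right>}$, and the surviving symmetric monomials are not killed by a length bound, since even right multiplications and even Grassmann elements need not be nilpotent. My intended route is the $\varepsilon\mapsto-\varepsilon$ duality suggested by the structural lemma: the right‑multiplication operator algebras for $(\V^{\left<+1\right>},F_{r,s})$ and $(\V^{\left<-1\right>},F_{s,r})$ coincide after interchanging even and odd generators, and the superizing process for $\varepsilon$ and $-\varepsilon$ differs only by parity‑dependent signs. I would promote this to an isomorphism of the lattices $\ltt{\V^{\left<+1\right>}}$ and $\ltt{\V^{\left<-1\right>}}$ carrying $\V^{\left<+1\right>}_{r,s}$ to $\V^{\left<-1\right>}_{s,r}$, so that the conclusion transfers from the $\varepsilon=-1$ case. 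The two delicate points, which absorb most of the work, are (i) establishing the normal form rigorously, so that the witness above is genuinely a non‑identity, and (ii) justifying that the sign‑twist is a well‑defined equivalence of supervarieties; should (ii) resist a clean formulation, one must instead construct the separating polynomials for $\varepsilon=+1$ directly, as tail‑symmetric expressions built on a commutator core whose every parity specialization collapses on $F_{r,s}$ through the top‑degree bound of the finite exterior factor $\Lambda(V_1)$.
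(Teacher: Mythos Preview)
Your separating polynomial for $\varepsilon=-1$ does not work. You claim that the superized standard polynomial $\tilde f_N$ vanishes on $F_{r,s}$ once $N\geqslant r+2$ because ``a nonzero super-standard monomial can \dots\ use at most one odd multiplier''. This is false. A multilinear alternating polynomial becomes, after superization, \emph{super}-alternating: it vanishes when two \emph{even} arguments coincide, but it is \emph{symmetric} in odd arguments. Concretely, substituting a single odd generator $y$ for every $x_i$ gives
\[
\tilde f_N(u,v,y,\dots,y)=\sum_{\sigma\in\mathrm S_N}\mathrm{sgn}(\sigma)\cdot(-1)^{|\sigma|}(uv)R_y^N=N!\,(uv)R_y^N,
\]
and your structural lemma (odd $R$'s commute, so they generate a polynomial algebra $S(V_1)$, not an exterior one) gives no reason for $(uv)R_y^N$ to vanish in $F_{r,s}$. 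Thus your bound $N\geqslant r+2$, which depends only on $r$, cannot force $\tilde f_N=0$ for all $s$. A single alternation controls only one parity; to kill a free $(r,s)$-superalgebra you need a polynomial that simultaneously forbids repeated even \emph{and} repeated odd substitutions.

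That is exactly what the paper does, and it does it uniformly for both signs of $\varepsilon$. The paper uses the rectangular Young operator $\psi_{\tau d}$ of Lemma~\ref{lemma Endpsi}: for a $k\times n$ table with $k=rs+r+1$ and $n=s+1$, the polynomial
\[
f_{k,n}=(uv)\sum_{\rho\in\mathrm R_{\tau d}}\sum_{\sigma\in\mathrm C_{\rho\tau d}}(-1)^{|\sigma|}L_{x_{\sigma\rho(1)}}L_{z_1}L_{x_{\sigma\rho(2)}}L_{z_2}\cdots L_{x_{\sigma\rho(kn)}}
\]
is row-symmetric (so its superization dies on repeated odd entries in a row) and then column-alternated (so it dies on repeated even entries in a column); the pigeonhole count of Lemma~\ref{lemma Endpsi} then forces $\tilde f_{k,n}=0$ on all of $\mathfrak M_{r,s}\supseteq\Ve_{r,s}$. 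Non-vanishing in $\Tilde{\Ve}$ is witnessed not by a normal-form computation in the free algebra but by an explicit $\Ve$-superalgebra $\Ae$ on infinitely many odd generators (Lemma~\ref{lemma Ae-Ve}), in which an iterated \emph{left} multiplication survives; the interleaved spacers $z_i$ are there precisely to match the multiplication pattern of $\Ae$. Your duality idea for $\varepsilon=+1$ is therefore unnecessary (the same $\psi$-polynomial works), and your fallback ``tail-symmetric expressions \dots\ through the top-degree bound of $\Lambda(V_1)$'' runs into the same problem as above with the parities swapped: a symmetric tail controls odd repetitions but not even ones.
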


Finally, in Section~7, we suggest some open problems dealing with the introduced notions of basic superrank and basic spectrum
for varieties of algebras.

\subsection*{Common notations}%

Throughout the paper, we use the following notations. By
$\left[r\right]$
we denote the \textit{integer part of number~$r$};\,
$R_x$ and $L_x$ are \textit{operators of right} and
\textit{left multiplication}, respectively, \textit{by an
element~$x$};\, $T_x$ is a common notation for $R_x$ and $L_x$;\,
$T^*_x=\left\{
\begin{aligned}
L_x,\;\text{ if }\; T_x&=R_x,\\
R_x,\;\text{ if }\; T_x&=L_x;
\end{aligned}
\right.$\,
$X=\{x_1,x_2,\ldots\}$ is a \textit{countable set};\,
$X_n=\{x_1,x_2,\ldots,x_n\}$,
$n\in \mathbb N$;\,
$F$~is a
\textit{field of characteristic~$\chr F=0$};\,
$\svob{\mathcal V}{Y}$ is a \textit{free algebra of variety $\mathcal V$ on a set $Y$ of
free generators over $F$};\,
$\svobs{\mathcal V}{Z}$ is a \textit{free $\V\text{-superalgebra}$ on a set $Z$ of
free even and odd generators over $F$};\,
$\pnv$ is a \textit{subspace of
$\svob{\mathcal V}{X_n}$ of all multilinear polynomials of
degree~$n\geqslant 2$};\,
$(f)^{\mathrm T}$ is a \textit{$\mathrm{T}\text{-ideal}$ of
algebra $\svob{\mathcal V}{X}$ generated by the given polynomial
$f$};\,
${\mathrm S}_n$ is the \textit{symmetric group on the set $1,2,\dots,n$};\,
${\mathrm A}_n$ is the \textit{alternating subgroup of} ${\mathrm S}_n$;\,
${\mathrm C}_n$ is the \textit{subgroup of}
${\mathrm S}_n$
\textit{generated by the cycle~$\left(1\,2\ldots n\right)$};\,
$\left|\sigma\right|$ is a \textit{parity of permutation $\sigma\in{\mathrm S}_n$}, i.~e.
$\left|\sigma\right|= \left\{
\begin{aligned}
&0,\;\text{ if }\,\sigma\,\text{ is even,}\\
&1,\;\text{ if }\,\sigma\,\text{ is odd}.\\
\end{aligned}\right.$

In order to avoid complicated formulas while writing down the
elements of the space $\pnv$ we omit the indices of variables at
the operator symbols~$R,L$ and assume them to be arranged in the
ascending order.
For example, the notation
$\left(x_2x_4\right)LR^2$ means  the monomial
$\left(x_2x_4\right)L_{x_1}R_{x_3}R_{x_5}$.

\section{Alternative algebras}%
\label{Sec:AlternativeAlgebras}

Throughout this section, we set $\mathcal V=\mathrm{Alt}^{(2)}$
and $\mathfrak N=\mathrm{NAlt}^{(2)}$.

\subsection{Free $\mathcal V\text{-algebras}$}%
\label{SubSec:FreeAlt2algebras}

Recall that by the Artin Theorem~\cite[Chap.
2.3]{Zhevlakov-Slin'ko-Shestakov-Shirshov} every two-generated
alternative algebra is an associative one. It is also well known
that every alternative algebra satisfies the central Moufang
identity
\[
x\cdot yz\cdot x=xy\cdot zx.
\]
Thus by virtue of metability~\eqref{eq Metab} in the free algebra
$\svob{\mathcal V}{X}$, we have
\[
x\cdot yz\cdot x=0.
\]
Moreover, combining alternativity~\eqref{eq Alt} with~\eqref{eq
Metab}, we get
\[
x(x\cdot yz)=0,\quad (yz\cdot x)x=0,\quad (x,zt,y)=(zt\cdot
y)x=x(y\cdot zt).
\]
Therefore the relations
\[
T_xT^*_x=T_xT_x=0,\quad \left[L_x,R_y\right]=R_yR_x=L_yL_x
\]
hold for the operators of multiplication acting on
${\left(\svob{\mathcal V}{X}\right)}^2$. Using these relations,
one can prove the following

\begin{lemma}[\cite{Iltyakov82,Pchelintsev81}]\label{lemma-LinSpanAltPrelim}
The free algebra $\svob{\mathcal V}{X}$ is a linear span of the
monomials of the form
\[
\left(x_{i_1} x_{i_2}\right)T_{x_{i_3}}R_{x_{i_4}}\dots
R_{x_{i_n}},
\]
which are skew-symmetric with respect to their variables
$x_{i_3},x_{i_4},\dots,x_{i_n}$.
\end{lemma}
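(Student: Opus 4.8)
The plan is to combine metability with the operator relations displayed just before the lemma: first bring every multilinear monomial into an ``operator form'' $(x_{i_1}x_{i_2})T_{x_{i_3}}\cdots T_{x_{i_n}}$, then push all left multiplications to the front and convert them into right multiplications, and finally read off the skew-symmetry from the linearized relations.

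First I would use metability. Since $\bigl(\svob{\mathcal V}{X}\bigr)^2\cdot\bigl(\svob{\mathcal V}{X}\bigr)^2=0$ by \eqref{eq Metab}, in any factorization $w=w_1w_2$ of a multilinear monomial of degree $n\geqslant 2$ at least one factor must be a single generator; otherwise $w_1,w_2\in\bigl(\svob{\mathcal V}{X}\bigr)^2$ and $w=0$. Hence, by induction on $n$, every such monomial is obtained from a degree-two product $x_{i_1}x_{i_2}$ by successively multiplying on the left or on the right by single generators, i.e.\ it equals $(x_{i_1}x_{i_2})T_{x_{i_3}}\cdots T_{x_{i_n}}$, with each $T\in\{L,R\}$ acting on $\bigl(\svob{\mathcal V}{X}\bigr)^2$.

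Next I would normalize the operator string. From the relations $T_xT_x=T_xT_x^{*}=0$ and $[L_x,R_y]=R_yR_x=L_yL_x$ on $\bigl(\svob{\mathcal V}{X}\bigr)^2$ I get $L_xR_y=R_yL_x+R_yR_x$, which moves an $L$ one step to the right past an $R$ at the cost of a term with one fewer left multiplication, and $L_xL_y=R_xR_y$, which replaces two adjacent $L$'s by two $R$'s. Arguing by induction on the total number $m$ of left multiplications, I merge two of them into right multiplications when $m\geqslant 2$ (the correction terms having fewer $L$'s) and, when $m\leqslant 1$, move the unique remaining $L$ to the leading position via $R_xL_y=L_yR_x-R_xR_y$. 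This rewrites every operator monomial as a combination of monomials $(x_{i_1}x_{i_2})T_{x_{i_3}}R_{x_{i_4}}\cdots R_{x_{i_n}}$ in which only the leading operator may be a left multiplication. I expect this bookkeeping --- verifying that the rewriting terminates and never reintroduces interior $L$'s --- to be the main technical obstacle, although each individual step is forced by the relations.

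Finally I would establish the skew-symmetry. Linearizing $R_xR_x=0$ and $L_xR_x=0$ yields $R_xR_y+R_yR_x=0$ and $L_xR_y+L_yR_x=0$ on $\bigl(\svob{\mathcal V}{X}\bigr)^2$. The first makes the trailing right multiplications $R_{x_{i_4}},\dots,R_{x_{i_n}}$ pairwise anticommuting, so every transposition among $x_{i_4},\dots,x_{i_n}$ reverses the sign of the monomial; for the transposition of $x_{i_3}$ with $x_{i_4}$ I would invoke $R_xR_y+R_yR_x=0$ when the leading operator is $R$ and the companion relation $L_xR_y+L_yR_x=0$ when it is $L$. Since the adjacent transpositions $(i_3\,i_4),(i_4\,i_5),\dots,(i_{n-1}\,i_n)$ generate the full symmetric group on $\{x_{i_3},\dots,x_{i_n}\}$, each normal-form monomial is skew-symmetric in precisely the claimed variables, which completes the proof.
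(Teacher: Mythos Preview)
Your argument is correct and follows precisely the route the paper indicates: the lemma is stated there as a known result with citations, preceded only by the sentence ``Using these relations, one can prove the following,'' so the paper gives no detailed proof of its own. Your proposal supplies exactly the details the paper omits---metability forces the operator form, the relations $[L_x,R_y]=R_yR_x=L_yL_x$ and $T_xT_x=T_xT_x^{*}=0$ normalize the string to $TR\cdots R$, and the linearized vanishing relations yield the skew-symmetry---so there is nothing to compare.
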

Note that in view of non-nilpotency of $\svob{\mathcal V}{X}$~(see
\cite{Dorofeev60,Shestakov91}), Lemma~\ref{lemma-LinSpanAltPrelim}
implies immediately the infiniteness of the basic rank of
$\mathcal V$.

Further, by  metability of $\svob{\mathcal V}{X}$, the Artin
Theorem yields the following

\begin{proposition}\label{proposition IdentSvobAlt}
The algebra $\svob{\mathcal V}{X}$ satisfies the identities
\begin{align}
x^3T_y&=0,\label{eq quasinil}\\
(xy)T_x R_y&=0.\label{eq quadrnilp}
\end{align}
\end{proposition}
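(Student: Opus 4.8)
The plan is to prove the second identity \eqref{eq quadrnilp} first and then to derive the first one \eqref{eq quasinil} from it; both will be read off from the operator relations $T_xT^*_x=T_xT_x=0$ and $R_yR_x=L_yL_x$ (equivalently $R_xR_y=L_xL_y$) that hold for the multiplication operators acting on $\left(\svob{\mathcal V}{X}\right)^2$, combined with the left alternative law in the form $(xy)L_x=x\cdot xy=x^2y$. The guiding idea is that each monomial in question can be rewritten so that two equal multiplication operators act on an element of the square $\left(\svob{\mathcal V}{X}\right)^2$, whence it vanishes.

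For \eqref{eq quadrnilp} I treat the two choices of $T_x$ separately. When $T_x=L_x$, the left alternative law gives $(xy)L_x=x^2y=x^2R_y$, so that $(xy)L_xR_y=x^2R_yR_y=0$, because $x^2\in\left(\svob{\mathcal V}{X}\right)^2$ and $R_yR_y=0$ there. When $T_x=R_x$, I first replace $R_xR_y$ by $L_xL_y$ and then run the same computation: $(xy)R_xR_y=(xy)L_xL_y=(x^2y)L_y=x^2R_yL_y=0$, now using $R_yL_y=0$ on the square. This settles \eqref{eq quadrnilp}.

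For \eqref{eq quasinil} the case $T_y=R_y$ is the crux. A naive attempt to collapse $x^3R_y=x^2R_xR_y$ using the operator relations alone runs in circles, since those relations are symmetric enough to be consistent with a nonzero value; the missing ingredient is the skew-symmetry of Lemma~\ref{lemma-LinSpanAltPrelim}, which I bring in concretely by \emph{polarizing} the already proven identity $(xy)R_xR_y=0$ in the variable $y$. The resulting cross term is $(xy)R_xR_w+(xw)R_xR_y=0$ for all $x,y,w$; setting $y=x$ annihilates the second summand, which then carries $R_xR_x=0$, while the first summand becomes exactly $x^3R_w$, so $x^3R_w=0$. The case $T_y=L_y$ follows formally: writing $x^3=x\cdot x^2=x^2L_x$ and using $L_xL_y=R_xR_y$ gives $x^3L_y=x^2L_xL_y=x^2R_xR_y=x^3R_y=0$. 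I expect the polarization step, together with the recognition that the repeated operator $R_xR_x$ is what forces the vanishing, to be the only delicate point; everything else is a direct application of the stated relations.
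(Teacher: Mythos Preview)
Your argument is correct, but the paper's proof is far shorter and takes a different route. The paper simply invokes Artin's theorem: every two-generated alternative algebra is associative, and since each of the identities \eqref{eq quasinil}, \eqref{eq quadrnilp} involves only the two variables $x,y$, one may compute in the associative metabelian subalgebra generated by $x,y$. There one has directly $x^3R_y=x^2(xy)=0$, $x^3L_y=(yx)x^2=0$, $(xy)R_xR_y=(xy)(xy)=0$, and $(xy)L_xR_y=x^2y^2=0$, all by metabelianity. Your approach instead feeds the operator relations (which the paper derived just above from Artin and the Moufang identity) back into the computations, together with a polarization step to reach $x^3R_y=0$; this works, but the polarization is unnecessary once one recognizes that the whole computation lives in an associative metabelian algebra. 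What your approach buys is an explicit operator-level derivation that does not require re-invoking Artin's theorem; what the paper's approach buys is a one-line proof.
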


Combining Lemmas~\ref{lemma-LinSpanAltPrelim} with
identities~\eqref{eq quasinil},~\eqref{eq quadrnilp}, it is not
hard to prove the following
\begin{lemma}\label{lemma SvobAltnNilp}
${\left(\svob{\mathcal V}{X_n}\right)}^{n+2}=0$.
\end{lemma}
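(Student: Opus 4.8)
The plan is to prove the sharper statement that every homogeneous component of $\svob{\mathcal V}{X_n}$ of degree $m\geqslant n+2$ is zero. Since any product of $n+2$ elements of $\svob{\mathcal V}{X_n}$ has degree at least $n+2$, the subspace $\bigl(\svob{\mathcal V}{X_n}\bigr)^{n+2}$ is contained in $\bigoplus_{m\geqslant n+2}\bigl(\svob{\mathcal V}{X_n}\bigr)_m$, so the vanishing of all these components immediately yields the lemma.

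The first step is to invoke Lemma~\ref{lemma-LinSpanAltPrelim}: the degree-$m$ component of $\svob{\mathcal V}{X_n}$ is spanned by the monomials $\left(x_{i_1}x_{i_2}\right)T_{x_{i_3}}R_{x_{i_4}}\cdots R_{x_{i_m}}$ that are skew-symmetric in the $m-2$ operator variables $x_{i_3},\dots,x_{i_m}$. Because $\chr F=0$, such a monomial vanishes as soon as two of these operator variables coincide; hence a nonzero monomial must carry $m-2$ pairwise distinct variables in its operator slots. As every variable is drawn from $X_n$, this forces $m-2\leqslant n$. Consequently each monomial of degree $m>n+2$ is already zero by a pure counting argument, and only the top degree $m=n+2$ requires a genuine computation.

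For $m=n+2$ the operator slots must contain each of $x_1,\dots,x_n$ exactly once, so both head variables $x_{i_1},x_{i_2}$ necessarily reappear among the operator variables. If $x_{i_1}=x_{i_2}=x_a$, I would use the skew-symmetry to move the operator occurrence of $x_a$ into the $T$-slot, turning the monomial into $\pm\left(x_a^2\right)T_{x_a}R_{x_{j_1}}\cdots R_{x_{j_{n-1}}}$; since $\left(x_a^2\right)T_{x_a}=x_a^3$ by power-associativity of alternative algebras and at least one right multiplication survives when $n\geqslant2$, the monomial dies by \eqref{eq quasinil}. If instead $x_{i_1}\neq x_{i_2}$, I would bring $x_{i_1}$ into the $T$-slot and $x_{i_2}$ into the adjacent $R$-slot, reducing the monomial to $\pm\left(x_{i_1}x_{i_2}\right)T_{x_{i_1}}R_{x_{i_2}}R_{x_{j_1}}\cdots$, whose initial factor vanishes by \eqref{eq quadrnilp}.

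The main obstacle is exactly this boundary case $m=n+2$: below it the combinatorial count of distinct skew variables already forces vanishing, but at the maximal degree the monomial is nontrivially of top weight, and one must exploit the forced repetition of a head variable together with the skew-symmetry in the operator slots to expose one of the two normal forms $x^3T_y$ or $\left(xy\right)T_xR_y$. The delicate point is to guarantee that after the rearrangement a right-multiplication operator still remains (in the equal-head case) so that \eqref{eq quasinil} applies, or that the degree-four pattern $\left(x_{i_1}x_{i_2}\right)T_{x_{i_1}}R_{x_{i_2}}$ is genuinely produced so that \eqref{eq quadrnilp} applies; once this bookkeeping is verified, every spanning monomial of degree $m\geqslant n+2$ is zero and the conclusion $\bigl(\svob{\mathcal V}{X_n}\bigr)^{n+2}=0$ follows.
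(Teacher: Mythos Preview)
Your argument is correct and follows exactly the route the paper indicates (but does not spell out): invoke Lemma~\ref{lemma-LinSpanAltPrelim} to reduce to the skew-symmetric spanning monomials, use the pigeonhole count on the $m-2$ operator variables to kill degrees $m>n+2$, and at the top degree $m=n+2$ rearrange by skew-symmetry so that either identity~\eqref{eq quasinil} or identity~\eqref{eq quadrnilp} applies. Your observation that the equal-head case needs $n\geqslant 2$ is accurate and is not a defect of your proof: for $n=1$ the element $x_1^3$ is nonzero in $\svob{\mathcal V}{X_1}$, so the lemma as literally stated fails there; the paper only ever uses the result for $n\geqslant 2$ (see the end of \S\ref{SubSec:AdditiveBasisOfPnv}).
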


\subsection{Free superalgebras on odd generators}%
\label{SubSec:FreeSuperalgebrasOnOdd}

Let us set
\[
\varphi(x_1,x_2,x_3)=\sum_{\sigma\in\mathrm{S}_3}\left(x_{\sigma(1)}x_{\sigma(2)}\right)x_{\sigma(3)}.
\]

\begin{lemma}\label{lemma MetabOneOdd}
The free metabelian superalgebra on one odd generator satisfies
the superidentity\, $\Tilde{\varphi}(x_1,x_2,x_3)=0$.
\end{lemma}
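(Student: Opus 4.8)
The plan is to compute the superization $\Tilde{\varphi}$ explicitly for a single odd generator and show that, after applying the superidentities forced by metability and the defining relations of the operator calculus on $(\svob{\V}{X})^2$, it collapses to zero. Since we are working in the free metabelian superalgebra on one odd generator, say $z$, every variable $x_1,x_2,x_3$ in $\varphi$ is to be replaced by the same odd variable $z$, and the Koszul rule must be applied to each transposition that occurs when we symmetrize. First I would recall that $\varphi$ is the full symmetrization $\sum_{\sigma\in\mathrm S_3}(x_{\sigma(1)}x_{\sigma(2)})x_{\sigma(3)}$ of the left-normed monomial, so superizing it means attaching the sign $(-1)^{|\sigma|}$ coming from the parity bookkeeping when all three variables are odd (parity $1$), each adjacent swap contributing $(-1)^{1\cdot 1}=-1$.

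The key computation is therefore the following. Setting all variables equal to the odd $z$, the superpolynomial $\Tilde\varphi$ becomes $\sum_{\sigma\in\mathrm S_3}(-1)^{|\sigma|}(zz)z$, where the factor $(-1)^{|\sigma|}$ is the sign that the Koszul rule assigns to the permutation $\sigma$ relative to the identity arrangement. Summing $(-1)^{|\sigma|}$ over $\mathrm S_3$ gives the difference between the number of even and odd permutations, which is $3-3=0$. Hence $\Tilde\varphi(z,z,z)=\left(\sum_{\sigma\in\mathrm S_3}(-1)^{|\sigma|}\right)(zz)z=0$. The essential point is that the full symmetrizer of degree $3$, once superized on odd variables, turns into the full \emph{antisymmetrizer}, and an antisymmetrizer of three equal arguments vanishes identically—this is exactly the Grassmann-algebra phenomenon encoded by the Koszul rule.

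To make this airtight rather than heuristic, I would verify that no surviving cross-terms arise from the multilinear structure before the identification. Concretely, one writes $\Tilde\varphi(z_1,z_2,z_3)=\sum_{\sigma\in\mathrm S_3}(-1)^{|\sigma|}(z_{\sigma(1)}z_{\sigma(2)})z_{\sigma(3)}$ for distinct odd generators $z_1,z_2,z_3$ of the free superalgebra on three odd generators, and then specializes to the one-generator case by the substitution $z_i\mapsto z$. Under this substitution each monomial $(z_{\sigma(1)}z_{\sigma(2)})z_{\sigma(3)}$ becomes the single monomial $(zz)z$, and the signed sum of coefficients is $\sum_{\sigma}(-1)^{|\sigma|}=0$, giving the claim. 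I expect the main obstacle to be purely notational: one must be careful that the sign produced by the superizing process genuinely matches $(-1)^{|\sigma|}$ (the parity of the permutation) and not some other cocycle, so I would double-check the bookkeeping against the Koszul rule as stated in the excerpt—introduce $(-1)^{ij}$ each time a parity-$i$ variable passes a parity-$j$ variable—confirming that for all-odd variables the accumulated sign of a permutation is precisely its signature. Once that is confirmed, the vanishing is immediate.
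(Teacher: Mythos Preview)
Your argument has a genuine gap: verifying a superidentity $\Tilde\varphi(x_1,x_2,x_3)=0$ in a superalgebra $\mathcal A$ means showing it vanishes for \emph{all} homogeneous substitutions $x_i\in\mathcal A$, not merely for $x_1=x_2=x_3=z$. The free metabelian superalgebra on one odd generator $z$ contains many homogeneous elements besides $z$ itself (for instance the even element $z^2$, or odd elements such as $(z^2)z$), and your computation says nothing about $\Tilde\varphi(z,z,z^2)$ or $\Tilde\varphi(z,(z^2)z,z^2)$, etc. Multilinearity lets you reduce to a vector-space basis of $\mathcal A$, but the single generator $z$ does not span $\mathcal A$ as a vector space, so the specialization $z_i\mapsto z$ is not enough.

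The missing step is precisely what the paper supplies: metability $(\mathcal A^2)(\mathcal A^2)=0$ forces every term of $\Tilde\varphi(x_1,x_2,x_3)$ to vanish whenever two or more of the $x_i$ lie in $\mathcal A^2$, so one may assume at least two of the arguments equal the generator $z$. At that point your sign observation does the work: since those two arguments are the same \emph{odd} element, the transposition exchanging them contributes a Koszul sign $-1$ while leaving the monomial unchanged, so the six terms of $\Tilde\varphi$ cancel in pairs. Your all-odd calculation is the special case of this, but without the metability reduction the argument is incomplete.
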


\begin{proof}
Let $\mathcal A$ be the free metabelian superalgebra on one odd
generator. Consider the value $\Tilde{\varphi}(x_1,x_2,x_3)$ for
arbitrary homogeneous elements $x_1,x_2,x_3\in\mathcal A$. By
metability of~$\mathcal A$, we may assume that at least two of the
elements $x_1,x_2,x_3$ are generators of~$\mathcal A$. But in this
case, taking into account that $\mathcal A$ has only one odd
generator, we obtain that the linear combination
$\Tilde{\varphi}(x_1,x_2,x_3)$ contains with every its monomial
$\alpha w$ $(\alpha=\pm1)$ the monomial $-\alpha w$. Hence,
$\Tilde{\varphi}(x_1,x_2,x_3)=0$ in $\mathcal A$.
\end{proof}

\begin{lemma}\label{lemma IntersecX3Pnv}
The intersection $ {\mathcal I} ={(x^3)}^{\mathrm T}\cap
\Bigl(\sum_{n=3}^{\infty}\pnv\Bigr) $ is spanned over $F$ by the
element\, $\varphi(x_1,x_2,x_3)$.
\end{lemma}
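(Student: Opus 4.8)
The plan is to prove the two inclusions separately. We need to show that the intersection $\mathcal I = (x^3)^{\mathrm T}\cap\bigl(\sum_{n=3}^{\infty}\pnv\bigr)$ equals $F\varphi(x_1,x_2,x_3)$, where the $\mathrm T$-ideal is taken in $\svob{\V}{X}$ with $\V=\mathrm{Alt}^{(2)}$.

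First I would verify the easy inclusion $F\varphi\subseteq\mathcal I$. The element $\varphi(x_1,x_2,x_3)=\sum_{\sigma\in\mathrm S_3}(x_{\sigma(1)}x_{\sigma(2)})x_{\sigma(3)}$ is multilinear of degree $3$, so it plainly lies in $\pnva{3}$. To see it lies in the $\mathrm T$-ideal $(x^3)^{\mathrm T}$, I would exhibit $\varphi$ as the full linearization of $x^3=x\cdot x\cdot x$: substituting $x\mapsto x_1+x_2+x_3$ into $x^3$ (interpreted as $xx\cdot x$) and collecting the multilinear component recovers exactly $\sum_{\sigma}(x_{\sigma(1)}x_{\sigma(2)})x_{\sigma(3)}$, up to the fact that in the free algebra $\svob{\V}{X}$ one must track how the bracketing $xx\cdot x$ versus $x\cdot xx$ interacts with the metabelian and alternative relations; by Proposition~\ref{proposition IdentSvobAlt} and the operator relations $T_xT^*_x=T_xT_x=0$ from Subsection~\ref{SubSec:FreeAlt2algebras} these bracketings coincide modulo the relations, so the linearization of $x^3$ is indeed a scalar multiple of $\varphi$. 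Hence $\varphi\in\mathcal I$.

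The substantive direction is $\mathcal I\subseteq F\varphi$. The plan is to use the structure theory from Lemma~\ref{lemma-LinSpanAltPrelim}. An arbitrary element of $\mathcal I$ is a multilinear polynomial $f\in\sum_{n\ge 3}\pnva{n}$ that happens to lie in $(x^3)^{\mathrm T}$. I would first argue that $\mathcal I$ is closed under the action of the symmetric group permuting the variables, and that it suffices to treat each homogeneous degree $n$ separately, so we may take $f\in\pnva{n}$ for a fixed $n$. For $n=3$, the space $\pnva{3}$ of the free metabelian alternative algebra is small and spanned, by Lemma~\ref{lemma-LinSpanAltPrelim}, by monomials $(x_{i_1}x_{i_2})T_{x_{i_3}}$ that are skew-symmetric in a single trailing variable (so effectively by $(x_ix_j)x_k$ and $x_k(x_ix_j)$-type monomials modulo the relations); I would write down a basis of $\pnva{3}$ explicitly and compute which linear combinations arise as consequences of $x^3=0$ under all multilinear substitutions, showing the space of such consequences is one-dimensional and spanned by $\varphi$.

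The main obstacle, and the bulk of the work, will be ruling out elements of $\mathcal I$ of degree $n>3$: I must show that no genuinely higher-degree multilinear identity-consequence of $x^3=0$ survives in the free algebra beyond the single cubic relation. The strategy is to use Lemma~\ref{lemma SvobAltnNilp} and the nilpotency it provides together with identity~\eqref{eq quadrnilp}, $(xy)T_xR_y=0$, to show that every element of $(x^3)^{\mathrm T}$ of multilinear degree $n\ge 4$ is already forced to vanish in $\svob{\V}{X}$ — equivalently, that the $\mathrm T$-ideal generated by $x^3$, when intersected with the multilinear part, is concentrated entirely in degree $3$. Concretely I would take the general multilinear consequence of $x^3$ (a substitution of monomials into $x^3$, multiplied and embedded via the $\mathrm T$-ideal operations), reduce it to the canonical skew-symmetric form of Lemma~\ref{lemma-LinSpanAltPrelim}, and check using~\eqref{eq quasinil} $x^3T_y=0$ that all the resulting monomials of degree $\ge 4$ collapse to zero. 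The delicate point will be bookkeeping the skew-symmetrization together with the operator identities so as to confirm that the cubic relation $\varphi$ is the only new relation and that it does not propagate to higher degrees; once that is established, $\mathcal I=F\varphi$ follows.
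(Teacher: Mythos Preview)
Your plan is headed in the right direction but substantially overcomplicates what is a three-line argument. You correctly identify identity~\eqref{eq quasinil} as the tool for the ``multiplication'' part of the $\mathrm T$-ideal: since $x^3T_y=0$, the element $x^3$ and all its linearizations lie in the annihilator of $\svob{\V}{X}$, so any element of $(x^3)^{\mathrm T}$ obtained by multiplying a substitution instance of $\varphi$ by something nontrivial is already zero. Hence multilinear elements of $(x^3)^{\mathrm T}$ arise only from substitutions into $\varphi$. The single observation you are missing for that part is simply that $\varphi(w,x_2,x_3)=0$ whenever $w\in\bigl(\svob{\V}{X}\bigr)^2$: two of the six summands vanish by metability~\eqref{eq Metab}, and the remaining four cancel in pairs by the linearized operator relations $R_xR_y+R_yR_x=0$ and $L_xR_y+L_yR_x=0$ acting on the square (equivalently, by Lemma~\ref{lemma-LinSpanAltPrelim}). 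These two facts immediately give $\mathcal I=F\varphi$; this is exactly the paper's proof.

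Your invocation of Lemma~\ref{lemma SvobAltnNilp} is a wrong turn: that lemma bounds the nilpotency index of the free $\V$-algebra on \emph{finitely many} generators, but a multilinear polynomial of degree $n$ involves $n$ distinct variables, so the bound ${\bigl(\svob{\V}{X_n}\bigr)}^{n+2}=0$ never bites and gives no information here. Identity~\eqref{eq quadrnilp} is likewise unnecessary. Drop both, and there is no ``delicate bookkeeping'' left to do.
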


\begin{proof}
It is clear that $\varphi(x_1,x_2,x_3)\in \mathcal I$ as a
linearization of $x^3$. Furthermore,  by  \eqref{eq quasinil}, the
element $x^3$ and all its linearizations lie in the annihilator of
$\svob{\mathcal V}{X}$. On the other hand,
Lemma~\ref{lemma-LinSpanAltPrelim} and identity~\eqref{eq Metab}
yield $\varphi(w,x_2,x_3)=0$ for every element~$w\in\svob{\mathcal
V}{X}^2$. Therefore every element of $\mathcal I$ is proportional
to $\varphi(x_1,x_2,x_3)$.
\end{proof}

\begin{lemma}\label{lemma FreeVTwoOdd-Vv}
The free $\mathcal V\text{-superalgebra}$ on two odd generators is
an $\mathfrak N\text{-superalgebra}$.
\end{lemma}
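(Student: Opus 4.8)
The plan is to reduce the statement to a single superidentity and then check it by the sign-cancellation idea of Lemma~\ref{lemma MetabOneOdd}, reinforced by the super-versions of the operator relations available on the square of the algebra. Since $\mathfrak N$ is distinguished inside $\mathcal V$ by the one extra identity $x^3=0$ and $\chr F=0$, a $\mathcal V$-superalgebra is an $\mathfrak N$-superalgebra exactly when it satisfies the superidentity $\Tilde\varphi=0$, where $\varphi$ is the total linearization of $x^3$ fixed before Lemma~\ref{lemma MetabOneOdd}. Denoting by $\mathcal A$ the free $\mathcal V$-superalgebra on two odd generators $g_1,g_2$, it thus suffices to prove $\Tilde\varphi(a,b,c)=0$ for all homogeneous $a,b,c\in\mathcal A$.

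First I would clear the cases controlled by metability. If at least two of $a,b,c$ lie in $\mathcal A^2$, then in each monomial $(x_{\sigma(1)}x_{\sigma(2)})x_{\sigma(3)}$ of $\varphi$ either the outer factor lies in $\mathcal A^2$, so that the whole monomial is a product of two elements of $\mathcal A^2$ and vanishes by~\eqref{eq Metab}, or both inner factors lie in $\mathcal A^2$, so that the inner product already vanishes; hence $\Tilde\varphi(a,b,c)=0$. This leaves the case in which at least two of the arguments are generators.

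If two of the arguments are equal to the same generator, I would argue as in Lemma~\ref{lemma MetabOneOdd}: the six monomials of $\Tilde\varphi$ coincide in pairs as elements of $\mathcal A$, while the Koszul rule gives the two members of each pair opposite signs, because interchanging two equal odd elements contributes the factor $-1$; so the expression cancels term by term. Since there are only two odd generators, the pigeonhole principle makes this cover the whole subcase in which all three arguments are generators, as some generator is then repeated. The single remaining configuration is $\Tilde\varphi(g_1,g_2,w)$ with $g_1\neq g_2$ and $w\in\mathcal A^2$.

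In this configuration the two monomials having $w$ as outer factor vanish by metability, and the four surviving terms, written through the multiplication operators acting on $w$, are $wL_{g_1}R_{g_2}$, $wR_{g_1}R_{g_2}$, $wL_{g_2}R_{g_1}$, $wR_{g_2}R_{g_1}$, each carrying its Koszul sign. The decisive step is to superize, for the odd generators $g_1,g_2$, the operator relations $T_xT_x^*=T_xT_x=0$ valid on $\left(\svob{\mathcal V}{X}\right)^2$: the relation $R_xR_x=0$ yields $R_{g_1}R_{g_2}=R_{g_2}R_{g_1}$ and the relation $L_xR_x=0$ yields $L_{g_1}R_{g_2}=L_{g_2}R_{g_1}$ on $\mathcal A^2$. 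Comparing these two super-relations with the Koszul signs of the four surviving terms, the two $RR$-terms cancel against each other and the two $LR$-terms cancel against each other, so $\Tilde\varphi(g_1,g_2,w)=0$ and the proof closes. The hard part will be precisely this last bookkeeping: one has to compute the Koszul signs of the four terms and, independently, perform the superization of the operator identities with matching signs, so that the two sign computations align and produce exact cancellation rather than a nonzero residue.
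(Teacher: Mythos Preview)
Your proof is correct and follows essentially the same route as the paper: reduce to the superidentity $\Tilde\varphi=0$, use metability and the pigeonhole/sign-cancellation argument of Lemma~\ref{lemma MetabOneOdd} on generators. The only difference is that the paper invokes Lemma~\ref{lemma IntersecX3Pnv} to dispense immediately with the case $w\in\mathcal A^2$ (since $\varphi(w,x_2,x_3)=0$ is already an identity in $\mathcal V$, its superization holds in any $\mathcal V$-superalgebra), whereas you reprove this particular instance by hand via the superized operator relations $R_{g_1}R_{g_2}=R_{g_2}R_{g_1}$ and $L_{g_1}R_{g_2}=L_{g_2}R_{g_1}$; your sign computation there is correct, but citing Lemma~\ref{lemma IntersecX3Pnv} would shorten the argument.
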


\begin{proof}
Let $\mathcal A$ be the free $\mathcal V\text{-superalgebra}$ on
two odd generators. By  Lemma~\ref{lemma IntersecX3Pnv}, it
suffices to check $\Tilde{\varphi}(x_1,x_2,x_3)=0$ assuming that
$x_1,x_2,x_3$ are generators of $\mathcal A$. But in this case, at
least two of the elements $x_1,x_2,x_3$ coincide. Hence, with the
similar arguments as in Lemma~\ref{lemma MetabOneOdd}, one can
prove that $\Tilde{\varphi}(x_1,x_2,x_3)=0$.
\end{proof}

\subsection{Auxiliary $\mathfrak N\text{-superalgebra}$ on one odd generator}%
\label{SubSec:AuxiliaryVv-supOneOdd}

Let $U=F\cdot x$ be a superalgebra generated by an odd element~$x$
such that $x^2=0$. Consider a $\mathbb{Z}_2\text{-graded}$ space
$M=M_0+M_1$ over $F$ such that
\[
M_i=F[\varepsilon]\cdot a_i,\quad i=0,1,
\]
where $F[\varepsilon]$ is an algebraic extension of~$F$ with a
primitive 3-th root of~$1$, i.~e. such an element~$\varepsilon$
that $\varepsilon^2+\varepsilon+1=0$. It is clear that if
$\varepsilon\in F$, then $M$ is a $2\text{-dimensional}$ space
over $F$, otherwise, a $4\text{-dimensional}$ one. We define on
$M$ a structure of an $U\text{-superbimodule}$ such that the
action of the element~$x$ is given by the equalities
\[
a_i\cdot x = a_{1-i},\quad x\cdot a_i =
(i+\varepsilon)a_{1-i},\quad i=0,1.
\]
Consider the superalgebra $\mathcal A= U\dotplus M$ with the
$\mathbb{Z}_2\text{-grading}$
\[
\mathcal A=\mathcal A_0+\mathcal A_1,\quad \mathcal A_0=M_0,\quad
\mathcal A_1=U+M_1
\]
and the multiplication
\[
\left(u_1+m_1\right)\left(u_2+m_2\right)= u_1m_2+m_1u_2, \quad
u_1,u_2\in U,\quad m_1,m_2\in M.
\]
This superalgebra is called the \textit{split null extension of
$U$ by $M$}
(see~\cite{Trushina-Shestakov12,Zhevlakov-Slin'ko-Shestakov-Shirshov}).
It is known~\cite{Shestakov91,Trushina-Shestakov12} that $\mathcal
A$ is an alternative superalgebra.

\begin{lemma}\label{lemma A V'-SuperOneOdd}
$\mathcal A$ is an $\mathfrak N\text{-superalgebra}$ generated by
one odd element.
\end{lemma}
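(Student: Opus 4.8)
The plan is to verify that the superalgebra $\mathcal A = U \dotplus M$ is an $\mathfrak N\text{-superalgebra}$ generated by one odd element. Since $\mathfrak N = \mathrm{NAlt}^{(2)}$ is carved out of $\mathrm{Alt}$ by metability $xy\cdot zt = 0$ and the nilidentity $x^3 = 0$, and we already know from the cited results that $\mathcal A$ is an alternative superalgebra, it remains to check three things: that $\mathcal A$ is generated by a single odd element, that its Grassmann envelope is metabelian, and that its Grassmann envelope satisfies the superization $\Tilde{\varphi}(x_1,x_2,x_3)=0$ of the linearized nilidentity. The generation claim is immediate: the odd element $x \in U$ acts on $M$ by $a_0 \cdot x = a_1$, $x\cdot a_0 = \varepsilon a_1$, $a_1\cdot x = a_0$, $x\cdot a_1 = (1+\varepsilon)a_0$, so starting from $x$ one reaches $a_1$ (say, as a value of some product involving $a_0$) and hence all of $M$; more carefully, I would fix one module generator and show the rest of the basis of $M$ is obtained by repeatedly multiplying by $x$.

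First I would check metability. By the split-null construction, any product of two elements of $\mathcal A$ lands in $M$ (since $U\cdot U = 0$ as $x^2=0$, and $M\cdot M = 0$ by definition of the multiplication). Then a product of two such elements of $M$ is again zero because $M\cdot M=0$. Hence $\mathcal A$ itself already satisfies $xy\cdot zt = 0$ on the nose, and therefore so does its Grassmann envelope $\Gob{\mathcal A}$; equivalently $\mathcal A$ satisfies the superized metability identity trivially. This step is essentially bookkeeping with the multiplication table.

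The substantive step is the superidentity $\Tilde\varphi(x_1,x_2,x_3)=0$. By Lemma~\ref{lemma IntersecX3Pnv}, the multilinear part of the $\mathrm T$-ideal $(x^3)^{\mathrm T}$ inside $\bigoplus_{n\ge 3}\pnv$ is one-dimensional, spanned by $\varphi$; so checking this one superidentity (together with metability, which we have) certifies that $\Gob{\mathcal A}$ lies in $\mathfrak N$, i.e. that $\mathcal A$ is an $\mathfrak N\text{-superalgebra}$. To verify $\Tilde\varphi = 0$ I would evaluate $\Tilde\varphi(x_1,x_2,x_3)$ on homogeneous elements of $\mathcal A$. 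Since $\mathcal A$ is metabelian, any monomial in $\Tilde\varphi$ with two factors drawn from $M$ vanishes, so the only surviving evaluations have at least two of the arguments equal to the odd generator $x$ (the unique odd algebra-generator modulo $M$); the remaining argument is then an element of $M$. This reduces the check to a finite computation: expand $\Tilde\varphi(x,x,a_i)$, $\Tilde\varphi(x,a_i,x)$, $\Tilde\varphi(a_i,x,x)$ using the Koszul sign rule and the action formulas $a_i\cdot x = a_{1-i}$, $x\cdot a_i = (i+\varepsilon)a_{1-i}$.

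The main obstacle I expect is precisely this finite sign computation: the superization $\Tilde\varphi$ is a signed symmetrization over $\mathrm S_3$, and because two of the inputs are odd the Koszul rule attaches signs $(-1)^{ij}$ each time an odd variable is transposed past another odd or even variable. The delicate point is that the coefficients $(i+\varepsilon)$ and $1$ governing left versus right action by $x$ must combine with these parity signs so that the six signed terms cancel in pairs (or sum to zero). This is exactly where the choice of the primitive cube root of unity $\varepsilon$ with $\varepsilon^2+\varepsilon+1=0$ should enter: I would expect the asymmetry between $a_i\cdot x$ and $x\cdot a_i$ to produce, after collecting, a factor proportional to $1 + \varepsilon + \varepsilon^2$, which vanishes by the defining relation of $\varepsilon$. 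Carrying out this cancellation correctly for each of the two index values $i=0,1$ is the crux; once it is confirmed, the lemma follows by combining it with the metability of $\mathcal A$ and Lemma~\ref{lemma IntersecX3Pnv}.
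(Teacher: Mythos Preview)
Your generation claim has a genuine gap. The element $x\in U$ alone cannot generate $\mathcal A$: by construction $x^2=0$, so the subalgebra generated by $x$ is just $F\cdot x$ and never meets $M$. Your parenthetical ``as a value of some product involving $a_0$'' is circular, and your fallback ``fix one module generator and multiply by $x$'' would use two generators, not one. The paper's point is that the single \emph{odd} element $y=a_1+x\in\mathcal A_1$ generates $\mathcal A$: one computes $y^2=(2+\varepsilon)a_0$, and from $a_0$ repeated right and left multiplication by $y$ recovers $a_1$, $\varepsilon a_1$, $\varepsilon a_0$, hence all of $M$ (with a small extra step when $\varepsilon\notin F$). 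This is where the primitive cube root actually matters, since $2+\varepsilon\neq 0$.

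For the nilidentity your plan is workable but overcomplicated, and your expectation about where $\varepsilon^2+\varepsilon+1=0$ enters is misplaced. Once you know $\mathcal A$ is metabelian and generated by \emph{one} odd element, Lemma~\ref{lemma MetabOneOdd} applies directly: in any metabelian superalgebra on a single odd generator, evaluating $\tilde\varphi(x_1,x_2,x_3)$ forces (by metability) at least two of the arguments to equal that generator, and then the Koszul sign for swapping two equal odd slots pairs every monomial with its negative. No arithmetic with $\varepsilon$ is needed for this step; the relation $\varepsilon^2+\varepsilon+1=0$ is used in the cited verification that $\mathcal A$ is an alternative superalgebra, not in checking $\tilde\varphi=0$. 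So the fix is: replace $x$ by $y=a_1+x$ in the generation argument, then invoke Lemma~\ref{lemma MetabOneOdd} (together with Lemma~\ref{lemma IntersecX3Pnv}) rather than a direct computation.
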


\begin{proof}
Let us show that $\mathcal A$ can be generated by the element
$y=a_1+x$. First we have
$$
y^2={\left(a_1+x\right)}^2=a_1\cdot x +x\cdot
a_1=(2+\varepsilon)a_0.
$$
Hence for $\varepsilon\in F$, we get
$$
a_0=\alpha y^2,\quad\; a_1=\alpha y^2\cdot
y,\quad\text{where}\quad \alpha=\frac{1-\varepsilon}{3}.
$$
Otherwise, we calculate
\begin{align*}
y^2\cdot y&=(2+\varepsilon)a_0\cdot x=(2+\varepsilon)a_1,\\
y\cdot
y^2&=x\cdot(2+\varepsilon)a_0=(2\varepsilon+\varepsilon^2)a_1=(\varepsilon-1)a_1.
\end{align*}
Considering the difference of the obtained equalities, we have
$$
3a_1=y^2\cdot y-y\cdot y^2.
$$
Therefore to express any element of $M$ with $y$ over $F$ it
suffices to use the relations
$$
a_0=a_1\cdot y,\qquad \varepsilon a_1=y\cdot a_0,\qquad
\varepsilon a_0=\varepsilon a_1\cdot y.
$$

To conclude the proof note that $\mathcal A$ is metabelian by
definition. Hence by Lemma~\ref{lemma MetabOneOdd}, it satisfies
identity~\eqref{eq Nil3}.
\end{proof}

\subsection{Additive basis of $\pnvv$}%
\label{SubSec:AdditiveBasisOfPnvv}

\begin{definitionsec}
{\upshape The \textit{basis words of $\pnvv$} are the polynomials
of the following types:}
\begin{align*}
1)&\enskip\left(x_1x_2\right)TR^{n-3},\\
2)&\enskip\left(x_1\circ x_i\right)TR^{n-3},\quad i=2,\dots,n.
\end{align*}
\end{definitionsec}

\begin{lemma}\label{lemma LinearSpanNilAlt}
The space $\pnvv$ is spanned by its basis words.
\end{lemma}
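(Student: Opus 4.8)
The plan is to start from the presentation of the free $\V$-algebra already in hand and specialize it to the nilalgebra $\mathfrak N=\mathrm{NAlt}^{(2)}$. By Lemma~\ref{lemma-LinSpanAltPrelim}, every multilinear polynomial of degree $n$ in $\pnvv$ is a linear combination of monomials of the form $(x_{i_1}x_{i_2})T_{x_{i_3}}R_{x_{i_4}}\cdots R_{x_{i_n}}$ that are skew-symmetric in the last $n-2$ operator variables $x_{i_3},\dots,x_{i_n}$. Since we are working in $\mathfrak N$, the defining identity \eqref{eq Nil3} $x^3=0$ is available in addition to the metabelian relation \eqref{eq Metab}; equivalently, by Lemma~\ref{lemma IntersecX3Pnv}, the linearization $\varphi(x_1,x_2,x_3)=\sum_{\sigma\in\mathrm S_3}(x_{\sigma(1)}x_{\sigma(2)})x_{\sigma(3)}=0$ holds in the free $\mathfrak N$-algebra. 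The goal is to use this extra relation to push every monomial of the generic form into one where the head $(x_1x_2)$ is either an \emph{ordered} pair $x_1x_2$ (type~1) or a \emph{symmetrized} pair $x_1\circ x_i$ (type~2), thereby cutting the spanning set down to the two stated families.

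First I would fix normal-form conventions for the indices: by skew-symmetry in the trailing $R$-variables, I may assume $i_4<i_5<\dots<i_n$ and absorb signs accordingly, which is exactly the ascending-order convention the paper adopts for the operator symbols. This reduces the classification to controlling the head bracket $(x_{i_1}x_{i_2})T_{x_{i_3}}$, i.e. the choice of which three variables occupy the first three slots and in which arrangement, with the remaining variables forced into ascending $R$-operators. Next I would invoke the operator relations derived just before Lemma~\ref{lemma-LinSpanAltPrelim}, namely $T_xT^*_x=T_xT_x=0$ and $[L_x,R_y]=R_yR_x=L_yL_x$ on $(\svob{\V}{X})^2$; these let me rewrite any $L$ sitting among the trailing operators as an $R$, so that at most the single operator $T_{x_{i_3}}$ in the third slot can be an $L$, matching the shape $TR^{n-3}$ in both basis types. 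The relations $T_xT_x=0$ simultaneously kill any monomial in which a repeated index would collapse, which is the mechanism guaranteeing multilinearity is preserved.

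The heart of the argument is the use of $\varphi=0$ to symmetrize or order the head. The relation $\varphi(x_1,x_2,x_3)=0$ equates the six arrangements $(x_{\sigma(1)}x_{\sigma(2)})x_{\sigma(3)}$; combined with metability (so that only the leading product is nonzero and the operators act on $(\svob{\V}{X})^2$), this gives a linear dependence among the possible heads $(x_ax_b)R_{x_c}$ and $(x_ax_b)L_{x_c}$ for a fixed three-element index set. I expect this to express, for each triple, the "generic" head in terms of the ordered head $x_1x_2$ (type~1, where $x_1$ is the least index playing a distinguished role) and the Jordan-symmetrized heads $x_1\circ x_i$ (type~2). The main obstacle will be the bookkeeping of signs and the verification that the two families are genuinely sufficient and not merely necessary: one must check that after applying $\varphi=0$ together with $T_xT^*_x=0$, no further independent head shapes survive, and in particular that the distinguished index $x_1$ can always be routed into the head rather than into the trailing $R$-string. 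I would handle this by a descent on the number of operators appearing as $L$ (reducing to at most one via the commutator relations) followed by a case analysis on the position of the smallest index, using $\varphi=0$ to trade a disfavored arrangement for a combination of type~1 and type~2 words; the alternating/skew structure in the tail then makes the accounting of signs routine once the head is normalized.
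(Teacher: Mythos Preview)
Your overall plan---reduce via Lemma~\ref{lemma-LinSpanAltPrelim} and then normalize the head using $\varphi=0$---has the right shape, but the specific tools you name will not close the argument. The relation $\varphi=0$ yields only the cyclic identity $\sum_{\sigma\in\mathrm C_3}(x_{\sigma(1)}\circ x_{\sigma(2)})x_{\sigma(3)}=0$ (and its left-handed analogue), and this is exactly what the paper uses to show that the span $I_n$ of all symmetrized-head elements $(x_i\circ x_j)TR^{n-3}$ coincides with the span of the type~2 words. But $\varphi=0$ says nothing about the antisymmetric part of the head, and the operator relations $T_xT^*_x=0$, $[L_x,R_y]=R_yR_x$ you cite act on $\bigl(\svob{\V}{X}\bigr)^2$ only, so they cannot exchange a \emph{head} variable with a tail variable. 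Concretely, you have no mechanism to pass from, say, $(x_3x_4)T_{x_1}R_{x_2}\cdots$ to something with $x_1$ inside the product: only the symmetrized piece $(x_3\circ x_4)T_{x_1}$ is handled by the cyclic relation, while the commutator piece $[x_3,x_4]T_{x_1}R_{x_2}\cdots$ remains unreduced by your rules.

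The paper fills this gap with a second, independent identity: the linearized alternative law $x(T_y\circ T_z)=(y\circ z)T^*_x$, applied with $x$ a generator rather than an element of the square. For $T=R$ this reads $(x_ix_j)R_{x_k}+(x_ix_k)R_{x_j}=(x_j\circ x_k)L_{x_i}$, so after appending the trailing $R$-string one has $(x_ix_j)R_{x_k}R^{n-3}\equiv -(x_ix_k)R_{x_j}R^{n-3}\pmod{I_n}$; the case $T=L$ is analogous. Combined with $(x_ix_j)\equiv -(x_jx_i)\pmod{I_n}$ and the tail skew-symmetry from Lemma~\ref{lemma-LinSpanAltPrelim}, this makes the general monomial skew-symmetric in \emph{all} $n$ variables modulo $I_n$, hence congruent to $\pm(x_1x_2)TR^{n-3}$. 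So the proof really rests on two separate mechanisms---nilpotency to control $I_n$, and alternativity to control the quotient modulo $I_n$---and your proposal supplies only the first.
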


\begin{proof}
Let $u$ be an arbitrary monomial of $\pnvv$. By
Lemma~\ref{lemma-LinSpanAltPrelim} we may assume that $u$ has the
form
\[
u=\left(x_ix_j\right)TR^{n-3},\quad
i,j\in\left\{1,\dots,n\right\}.
\]
Consider the linear span $I_n$ of the polynomials of $\pnvv$ of
the form
\[
\left(x_i\circ x_j\right)TR^{n-3}.
\]
Using the linearization
\[
\sum_{\sigma\in\mathrm{C}_3}\left(x_{\sigma(1)}\circ
x_{\sigma(2)}\right)x_{\sigma(3)}=0
\]
of identity~\eqref{eq Nil3} it is not hard to show that $I_n$ is
the linear span of basis words of type~2). On the other hand, the
linearizations
\[
x\left(T_y\circ T_z\right)=\left(y\circ z\right) T^*_x
\]
of identities~\eqref{eq Alt} imply that the monomial $u$ is
skew-symmetric with respect to all its variables modulo~$I_n$.
\end{proof}

\begin{lemma}\label{lemma BasisWordsNilAlt}
Any nontrivial linear combination of  basis words of $\pnvv$ is
not an identity of ${\mathrm G}\left(\mathcal A\right)$.
\end{lemma}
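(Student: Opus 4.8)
The plan is to show that the basis words of $\pnvv$, viewed as multilinear functions on the Grassmann envelope $\Gob{\A}$, are linearly independent; equivalently, that a nontrivial linear combination of them cannot vanish identically on $\Gob{\A}$. Since the words are multilinear it suffices to substitute $\mathbb{Z}_2$-homogeneous elements of $\Gob{\A}$ for the variables and exhibit a substitution on which the combination is nonzero. The computations are governed by the structure of $\A=U\dotplus M$: because $M\cdot M=0$ and $x^2=0$, a word evaluates to something nonzero only if its \emph{seed} --- the leading product $x_1x_2$ for type~1, or the Jordan product $x_1\circ x_i$ for type~2 --- first lands in $M$, after which every remaining factor must carry the odd element $x$ in order to propagate. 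Tracking an evaluation then amounts to recording three data: a scalar, a Grassmann monomial (with a sign from the anticommutativity of the $e_j$), and the module index toggling $a_0\leftrightarrow a_1$ at each multiplication.

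Two features of $\A$ do the separating work. First, \emph{parity} distinguishes symmetric words from skew ones: substituting $x_k\mapsto e_k\otimes y$ with the single odd generator $y=a_1+x$ kills every type-2 word, since $x_1\circ x_i$ becomes a multiple of $[y,y]=0$, while keeping the type-1 words alive (their seed is $-(2+\varepsilon)(e_1e_2)\otimes a_0$, nonzero as $2+\varepsilon\neq0$); conversely, inserting one even element $1\otimes a_0$ at a chosen position, or using $x$ versus $a_1$ at the two seed positions, revives exactly the symmetric products we want to see. Second, the cube root $\varepsilon$ distinguishes \emph{left from right multiplication}: for a seed $c\,G\otimes a_s$, right multiplication by an odd generator yields $c(-1)^{s}(Ge_p)\otimes a_{1-s}$ while left multiplication yields $c(s+\varepsilon)(Ge_p)\otimes a_{1-s}$, so the $T=L$ word equals the $T=R$ word times $\varepsilon$ if the seed lies in $M_0$ and times $-(1+\varepsilon)=\varepsilon^2$ if it lies in $M_1$. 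As $\varepsilon\neq\varepsilon^2$, comparing an $M_0$-seed evaluation with an $M_1$-seed evaluation will separate the coefficients of the two leading-operator choices.

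Concretely I would argue in the order dictated by which words a substitution keeps alive. Writing a general combination with coefficients $\alpha_R,\alpha_L$ (type~1) and $\beta_{i,R},\beta_{i,L}$ (type~2), the substitution $x_k\mapsto e_k\otimes y$ annihilates all type-2 words and forces $\alpha_R+\varepsilon\alpha_L=0$. Next, for each $m\geqslant3$ I would isolate the word $x_1\circ x_m$ twice: once by $x_1\mapsto e_1\otimes x$, $x_m\mapsto e_m\otimes a_1$ and all other $x_k\mapsto e_k\otimes x$ --- this kills type~1 (as $x^2=0$) and every other Jordan product (as $[x,x]=0$), leaving a seed $-\varepsilon(e_1e_m)\otimes a_0\in M_0$ and the relation $\beta_{m,R}+\varepsilon\beta_{m,L}=0$; and once by $x_m\mapsto 1\otimes a_0$ with the rest $e_k\otimes y$, isolating the same word with seed in $M_1$ and giving $\beta_{m,R}+\varepsilon^2\beta_{m,L}=0$. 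Since $\varepsilon\neq\varepsilon^2$ these yield $\beta_{m,R}=\beta_{m,L}=0$ for all $m\geqslant3$. The remaining coefficients $\alpha_R,\alpha_L,\beta_{2,R},\beta_{2,L}$ --- whose words share the seed variables $\{x_1,x_2\}$ and so cannot be isolated from one another --- are then pinned down by three further substitutions: an $M_0$-seed one ($x_1\mapsto e_1\otimes a_1$, rest $e_k\otimes x$), which after the previous step gives $\beta_{2,R}+\varepsilon\beta_{2,L}=0$, and the two $M_1$-seed ones placing $1\otimes a_0$ at $x_1$ and at $x_2$, whose type-1:type-2 seed-weights are $1:(1+\varepsilon)$ and $\varepsilon:(1+\varepsilon)$. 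Combining these with $\varepsilon\neq1$ and $1+\varepsilon\neq0$ forces $\alpha_L=\beta_{2,L}=0$ and hence $\alpha_R=\beta_{2,R}=0$.

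The routine part is the propagation of signs and scalars through the chain of $n-2$ multiplications, which I would record once as the two formulas above and then reuse. The genuinely delicate point --- and the reason the superalgebra $\A$ was built with a primitive cube root of unity in its module action --- is the separation of the leading operators $T=R$ and $T=L$: every single substitution sends all surviving basis words to scalar multiples of \emph{one} element of $\Gob{\A}$, so a lone substitution yields only one linear relation, and one must play the $M_0$-seed (ratio $\varepsilon$) and $M_1$-seed (ratio $\varepsilon^2$) evaluations against each other. Verifying that the resulting linear system is nonsingular is precisely the assertion that $1,\varepsilon,1+\varepsilon,1+2\varepsilon,2+\varepsilon$ are the "right" nonzero and pairwise-distinct quantities, which is guaranteed by $\varepsilon^2+\varepsilon+1=0$ and $\varepsilon\neq1$.
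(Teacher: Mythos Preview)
Your argument is correct and follows essentially the same strategy as the paper: you separate the $T=R$ and $T=L$ coefficients by evaluating on seeds landing in $M_0$ versus $M_1$, which produce the distinct $L/R$ ratios $\varepsilon$ and $-(1+\varepsilon)=\varepsilon^2$, and you isolate the type-2 index $i$ by planting the single module element at position $i$. The only packaging difference is that the paper substitutes directly into the superidentity $\tilde f_n=0$ on $\mathcal A$ (using $x_i=a_0$ or $a_1$, others $=x$) rather than into $f_n$ on $\mathrm G(\mathcal A)$, which spares it the Grassmann bookkeeping and lets it handle the four residual coefficients $\alpha,\alpha',\beta_2,\beta_2'$ in one pass; your route through $\mathrm G(\mathcal A)$ is equivalent but slightly longer.
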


\begin{proof}
Consider an arbitrary linear combination $f_n=g_n+h_n$ of basis
words of  $\pnvv$, where
\begin{align*}
g_n&= \alpha \left(x_1x_2\right)R^{n-2} +\alpha'
\left(x_1x_2\right)LR^{n-3},\\
h_n&= \sum_{i=2}^n\bigl( \beta_i \left(x_1\circ x_i\right)R^{n-2}
+\beta_i' \left(x_1\circ x_i\right)LR^{n-3}\bigr),
\end{align*}
for some scalars\, $\alpha,\alpha',\beta_j,\beta_j'\in F$. Suppose
that $\mathcal A$ satisfies the superidentity $\Tilde{f}_n=0$.
Then let us show that all the coefficients in $f_n$ are zero.

Fix $i>2$ and make in the identity $\Tilde{f}_n=0$ the
substitution $x_i=a_0,\ x_j=x$ for all $j\neq i$. Then we will get
\[
\beta_i(1+\varepsilon)a_0R^{n-3}-\beta_i'(1+\varepsilon)^2a_0R^{n-3}=0,
\]
which gives $\beta_{i}=(1+\varepsilon)\beta_{i}'$ for all $i>2$.
If we substitute $x_i=a_1$ instead of $a_0$, we will similarly get
$\beta_{i}=-\varepsilon\beta_{i}'$, which implies that
$\beta_{i}=\beta_{i}'=0$ for all $i>2$. Therefore, $h_n$ has a
form
\[
h_n=\beta (x_1\circ x_2)R^{n-2}+\beta'(x_1\circ x_2)LR^{n-3},\
\beta,\beta'\in F.
\]
Substituting now $x_i=a_0,\, i=1,2$ and $x_j=x$ for all $j\neq i$
we will get the two equalities
\begin{gather*}
\alpha a_0R^{n-3}-\alpha'(1+\varepsilon)a_0R^{n-3}+H_0=0,\\
\varepsilon(\alpha
a_0R^{n-3}-\alpha'(1+\varepsilon)a_0R^{n-3})+H_0=0,
\end{gather*}
where $H_0=\tilde h_n(a_0,x,\ldots,x)=\tilde h_n(x,a_0,\ldots,x)$.
This implies that $\alpha =(1+\varepsilon)\alpha'$ and $H_0=0$.
Similarly, making the substitutions $x_i=a_1,\, i=1,2$ and $x_j=x$
for all $j\neq i$ we get $\alpha=-\varepsilon \alpha'$ and
$H_1=0$, where $H_1=\tilde h_n(a_1,x,\ldots,x)=-\tilde
h_n(x,a_1,\ldots,x)$. Thus, $\alpha=\alpha'=0$. Finally, we have
\begin{gather*}
H_0=\beta(1+\varepsilon)a_0R^{n-3}-\beta'(1+\varepsilon)^2a_0R^{n-3}=0,\\
H_1=-\beta\varepsilon a_1R^{n-3}-\beta'\varepsilon^2a_1R^{n-3}=0,
\end{gather*}
which implies that $\beta=\beta'=0$. Therefore all the scalars in
$f_n$ are zero.
\end{proof}

\smallskip

By Lemmas~\ref{lemma A V'-SuperOneOdd}--\ref{lemma
BasisWordsNilAlt}, we obtain $\mathfrak N_{0,1}=\Tilde{\mathfrak
N}$.

\subsection{Index of nilpotency of $\svob{\mathfrak N}{X_n}$}%
\label{SubSec:NilpotencyIndexOfSvobNilpAltn}

\begin{lemma}\label{lemma IndexNilpSvobNilAltn}
The index of nilpotency of $\svob{\mathfrak N}{X_n}$ is equal
to~$n+2$.
\end{lemma}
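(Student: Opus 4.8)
The plan is to prove the two inequalities $\left(\svob{\mathfrak N}{X_n}\right)^{n+2}=0$ and $\left(\svob{\mathfrak N}{X_n}\right)^{n+1}\neq 0$ separately. The upper bound is immediate: since $\mathfrak N=\mathrm{NAlt}^{(2)}$ is the subvariety of $\mathcal V=\mathrm{Alt}^{(2)}$ cut out inside $\mathcal V$ by the extra identity~\eqref{eq Nil3}, the algebra $\svob{\mathfrak N}{X_n}$ is a homomorphic image of $\svob{\mathcal V}{X_n}$, so Lemma~\ref{lemma SvobAltnNilp} gives $\left(\svob{\mathfrak N}{X_n}\right)^{n+2}=0$ at once. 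Thus the whole content of the statement is the lower bound: one must produce a single nonzero element of degree $n+1$.

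For the lower bound I would take the explicit word $w=\left(x_1x_1\right)R_{x_2}R_{x_3}\cdots R_{x_n}=x_1^2x_2\cdots x_n$ of degree $n+1$. It uses $x_1$ twice and each of $x_2,\dots,x_n$ once, so its tail variables are pairwise distinct and it is an admissible monomial in the sense of Lemma~\ref{lemma-LinSpanAltPrelim}. To see that $w\neq 0$ in $\svob{\mathfrak N}{X_n}$ it suffices to evaluate it to a nonzero value on some algebra of $\mathfrak N$, and the natural choice is the Grassmann envelope $\Gob{\mathcal A}\in\mathfrak N$ of the $\mathfrak N$-superalgebra $\mathcal A$ of Lemma~\ref{lemma A V'-SuperOneOdd}.

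The substitution is the crux, and the only real obstacle. The purely odd substitution $x_i\mapsto e_i\otimes y$ (with $y=a_1+x$) that detects the multilinear words in Lemma~\ref{lemma BasisWordsNilAlt} annihilates $w$, because the doubled variable produces the Grassmann square $e_1^2=0$; a purely even substitution fails as well, since $a_0^2=0$. The idea is therefore to feed the repeated variable a mixed even-plus-odd element, setting $x_1\mapsto e_1\otimes y+c\otimes a_0$ with $c\in\mathrm G_0$ a product of fresh generators (hence central in $\mathrm G$), and $x_i\mapsto e_i\otimes y$ for $i\geqslant 2$. Writing $g_1=e_1\otimes y$ and $h=c\otimes a_0$, one has $g_1^2=h^2=0$, while the surviving cross terms, computed from $y\,a_0=\varepsilon a_1$ and $a_0\,y=a_1$, give $x_1^2\mapsto g_1h+hg_1=(1+\varepsilon)(e_1c)\otimes a_1\neq 0$, where $1+\varepsilon=-\varepsilon^2\neq 0$.

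It then remains to apply $R_{x_2},\dots,R_{x_n}$: since the evaluation is left-normed, metability~\eqref{eq Metab} never triggers, and each right multiplication by $e_i\otimes y$ merely appends the new generator $e_i$ to the Grassmann part and toggles $a_0\leftrightarrow a_1$ in the algebra part, up to sign. As $e_1,\dots,e_n$ together with the generators occurring in $c$ are pairwise distinct, the final Grassmann factor $e_1e_2\cdots e_nc$ is nonzero, so $w$ takes a nonzero value in $\Gob{\mathcal A}$. Hence $\left(\svob{\mathfrak N}{X_n}\right)^{n+1}\neq 0$, and together with the upper bound this shows the index of nilpotency equals exactly $n+2$. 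The one delicate step is recognising that the repetition forces the mixed even–odd substitution (so that the vanishing Grassmann squares are replaced by the surviving cross terms governed by $1+\varepsilon\neq 0$); after that the verification is a routine alternating computation.
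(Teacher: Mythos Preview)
Your proof is correct. The upper bound via Lemma~\ref{lemma SvobAltnNilp} is exactly as in the paper, and your choice of the witness $w=x_1^2R_{x_2}\cdots R_{x_n}$ is also the paper's. The computation of the substitution checks out: with $y=a_1+x$ one has $ya_0=\varepsilon a_1$, $a_0y=a_1$, so $x_1^2\mapsto(1+\varepsilon)e_1c\otimes a_1$ with $1+\varepsilon=-\varepsilon^2\neq 0$; each further $R_{e_i\otimes y}$ appends $e_i$ on the Grassmann side and toggles $a_0\leftrightarrow a_1$ on the algebra side, so the value is $(1+\varepsilon)\,ce_1e_2\cdots e_n\otimes a_{*}\neq 0$.

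The execution, however, differs from the paper's. Rather than evaluating $w$ directly in $\Gob{\mathcal A}$, the paper linearizes $w$ in $x_1$ to obtain the multilinear polynomial $(x_1\circ x_{n+1})R_{x_2}\cdots R_{x_n}$, observes that this is a basis word of type~2) in $\mathcal P_{n+1}(\mathfrak N)$, and then invokes Lemma~\ref{lemma BasisWordsNilAlt} (together with Lemma~\ref{lemma A V'-SuperOneOdd}) to conclude it is nonzero; since $\chr F=0$, $w$ is then nonzero as well. This is shorter because the linear-independence machinery has already been set up, and it sidesteps the need for the mixed even--odd substitution that you discovered was forced by the repeated variable. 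Your route is more self-contained: it needs only that $\Gob{\mathcal A}\in\mathfrak N$ and a single explicit evaluation, not the full linear independence of the basis words. Both arguments ultimately rest on the same superalgebra $\mathcal A$.
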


\begin{proof}
By Lemma~\ref{lemma SvobAltnNilp}, the index of nilpotency of
$\svob{\mathfrak N}{X_n}$ is not more than~$n+2$. Let us provide a
nonzero element of $\svob{\mathfrak N}{X_n}$ of degree~$n+1$.
Consider the monomial
\[
w=x^2_1R_{x_2}\dots R_{x_n}.
\]
Note that the linearization of $w$ can be written as a basis word
of type~2) of ${\mathcal P}_{n+1}\left(\mathfrak N\right)$. Hence
by Lemmas~\ref{lemma A V'-SuperOneOdd} and~\ref{lemma
BasisWordsNilAlt}, $w\neq 0$ in $\svob{\mathfrak N}{X_n}$.
\end{proof}

Lemma~\ref{lemma IndexNilpSvobNilAltn} implies the strict
inclusions $\mathfrak N_n\subset\mathfrak N_{n+1}$
for~$n\in\mathbb N$.

Theorem~\ref{theorem NilAlt2} is proved.

\subsection{Auxiliary $\mathcal V\text{-superalgebras}$}%
\label{SubSec:AuxiliaryV-superalgebras}

Let $\mathcal A$ be the superalgebra defined in
Section~\ref{SubSec:AuxiliaryVv-supOneOdd}. Consider two
superalgebras $\mathcal B=\mathcal B_0+\mathcal B_1$ and $\mathcal
B'=\mathcal B'_0+\mathcal B'_1$ defined by the following
conditions:
\begin{enumerate}
\item
\[
\begin{aligned}
\mathcal B_0&=\mathcal A_0+ F\cdot e,            & \mathcal B_1&=\mathcal A_1+F\cdot ex+F\cdot xe+F\cdot exe,\\
\mathcal B'_0&=\mathcal A_0+F\cdot yx+F\cdot xz,& \mathcal
B'_1&=\mathcal A_1+F\cdot y+F\cdot z+F\cdot yxz;
\end{aligned}
\]
\item $\mathcal A$ is a subalgebra of $\mathcal B$ and $\mathcal
B'$; \item all nonzero products of basis elements of $\mathcal B$
and $\mathcal B'$, in the case when at least one of the factors
doesn't lie in $\mathcal A$, are the following:
\begin{gather*}
e\cdot x = ex,\quad x\cdot e= xe,\quad ex\cdot e= e\cdot xe= exe;\\
y\cdot x = yx,\quad x\cdot z= xz,\quad yx\cdot z= y\cdot xz= yxz.
\end{gather*}
\end{enumerate}

\begin{lemma}\label{lemma B V-Super}
$\mathcal B$ is a $\mathcal V\text{-superalgebra}$ generated by
one even and one odd elements; $\mathcal B'$ is a $\mathcal
V\text{-superalgebra}$ generated by three odd elements.
\end{lemma}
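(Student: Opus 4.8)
The plan is to establish the two assertions in turn: first the (routine) identification of generators, and then the (substantial) verification that $\mathcal B$ and $\mathcal B'$ satisfy the defining superidentities of $\mathcal V=\mathrm{Alt}^{(2)}$. For the generation, I would exhibit explicit generators. Writing $w=a_1+x$, Lemma~\ref{lemma A V'-SuperOneOdd} already gives that $w$ generates all of $\mathcal A$, so in particular $x$ lies in the subalgebra generated by $w$. Since condition~3 lists no nonzero product of $e$ with $a_1$, we get $e\cdot w=e\cdot x=ex$ and $w\cdot e=x\cdot e=xe$, whence $exe=(e\cdot w)\cdot e$; thus $\mathcal B$ is generated by the even element $e$ and the odd element $w$. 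Similarly, for $\mathcal B'$ one checks $y\cdot w=y\cdot x=yx$, $w\cdot z=x\cdot z=xz$ and $yxz=(y\cdot w)\cdot z$, so $\mathcal B'$ is generated by the three odd elements $w,y,z$.

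Metability is immediate. The monomial $xy\cdot zt$ carries its variables in the natural order, so its superization is $xy\cdot zt$ itself, and being a metabelian superalgebra means exactly $\mathcal B^2\mathcal B^2=0$. From condition~3, $\mathcal B^2$ is spanned by $\mathcal A^2$ together with $ex,xe,exe$ (and $(\mathcal B')^2$ by $\mathcal A^2$ together with $yx,xz,yxz$). As $\mathcal A^2\mathcal A^2=0$, and every nonzero product of a new spanning element requires a factor outside the square (namely $e$ in $ex\cdot e=e\cdot xe$, respectively $z$ in $yx\cdot z=y\cdot xz$), no product of two elements of the square survives; hence $\mathcal B^2\mathcal B^2=(\mathcal B')^2(\mathcal B')^2=0$.

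The core of the argument is super-alternativity. Since $\chr F=0$, it suffices to verify the multilinear superidentities
\[
(a,b,c)+(-1)^{|a||b|}(b,a,c)=0,\qquad (a,b,c)+(-1)^{|b||c|}(a,c,b)=0
\]
on homogeneous basis elements $a,b,c$ (with $|a|,|b|,|c|\in\{0,1\}$ their $\mathbb Z_2$-degrees). For triples drawn entirely from $\mathcal A$ these hold because $\mathcal A$ is an alternative superalgebra. When a new basis element occurs among $a,b,c$, I would show that the associator vanishes outright: the only degree-three product involving a new element is $exe$ in $\mathcal B$ (respectively $yxz$ in $\mathcal B'$), produced in exactly the two ways $ex\cdot e=e\cdot xe$ (respectively $yx\cdot z=y\cdot xz$), so the single borderline associator $(e,x,e)$ (respectively $(y,x,z)$) is zero precisely by the relations imposed in condition~3, while in every other mixed triple both summands of the associator are absent from the multiplication table. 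As the two associators in each super-alternative relation involve the same arguments, they vanish simultaneously whenever a new element is present, so both identities hold throughout.

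The main obstacle is exactly this last step: one must run systematically through the multiplication table to confirm that no mixed associator — a new basis element together with some of $a_0,a_1,x$ — is accidentally nonzero, keeping careful track of the Koszul signs. The relations $ex\cdot e=e\cdot xe$ and $yx\cdot z=y\cdot xz$ built into the definition of $\mathcal B$ and $\mathcal B'$ are exactly what is needed to annihilate the associators $(e,x,e)$ and $(y,x,z)$ that would otherwise obstruct alternativity; all remaining mixed associators vanish for degree or metability reasons, and the $\mathcal A$-triples are covered by the known super-alternativity of $\mathcal A$.
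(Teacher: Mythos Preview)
Your proposal is correct and follows essentially the paper's strategy: reduce to the known super-alternativity of $\mathcal A$ on triples lying entirely in $\mathcal A$, and observe that any degree-three product involving a new basis element is either zero or equals $exe$ (respectively $yxz$), which is associative by the imposed relation $ex\cdot e=e\cdot xe$ (respectively $yx\cdot z=y\cdot xz$) and lies in the annihilator, handling metability. Your treatment of the generators is in fact more careful than the paper's: the paper asserts that $\mathcal B$ is generated by $e$ and $x$ (and $\mathcal B'$ by $x,y,z$), but these elements alone cannot reach the $M$-part $a_0,a_1$ of $\mathcal A$; your choice of $e$ together with $w=a_1+x$ (and of $w,y,z$ for $\mathcal B'$), invoking Lemma~\ref{lemma A V'-SuperOneOdd} to recover all of $\mathcal A$ from $w$, is the correct argument.
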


\begin{proof}
It is clear that $\mathcal B$ can be generated by the elements
$e,x$ and $\mathcal B'$ can be generated by the elements $x,y,z$.
Thus by virtue of Lemma~\ref{lemma A V'-SuperOneOdd}, it remains
to notice that if some product $\rho\neq0$ of three basis elements
of $\mathcal B$ or $\mathcal B'$ contains a factor not lying
in~$\A$, then $\rho$ is associative and lies in the annihilator of
the corresponding algebra $\mathcal B$ or $\mathcal B'$.
\end{proof}

\subsection{Additive basis of $\pnv$}%
\label{SubSec:AdditiveBasisOfPnv}

\begin{definitionsec}
{\upshape The \textit{basis words of $\pnv$} are all basis words
of $\pnvv$ and the polynomial~$\varphi(x_1,x_2,x_3)$.}
\end{definitionsec}

It follows easily from Lemmas~\ref{lemma IntersecX3Pnv}
and~\ref{lemma LinearSpanNilAlt} that the space $\pnv$ is  spanned
by its basis words.

\begin{lemma}\label{lemma BasisWordsAlt}
Any nontrivial linear combination of  basis words of $\pnv$ is not
an identity of either ${\mathrm G}\left(\mathcal B\right)$ or
${\mathrm G}\left(\mathcal B'\right)$.
\end{lemma}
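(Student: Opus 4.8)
We need to show that any nontrivial linear combination of basis words of $\pnv$ fails to be an identity of $\Gob{\B}$ or $\Gob{\B'}$. The basis words of $\pnv$ are, by the preceding definition, the basis words of $\pnvv$ (types 1 and 2) together with $\varphi(x_1,x_2,x_3)$. Since $\varphi$ lives in degree $3$ while the genuinely interesting basis words occur in each degree $n$, the plan is to treat the two kinds of contributions separately and reduce the bulk of the work to Lemma~\ref{lemma BasisWordsNilAlt}.

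Let me think about how to organize this.

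**The plan.** Let me write $f_n = \lambda\,\varphi + g$ where $g$ is a linear combination of basis words of $\pnvv$, with the convention that $\lambda = 0$ unless $n = 3$. First I would dispose of the $\varphi$ term. The key point is that $\B'$ is generated by three odd generators $x,y,z$, and $\varphi(x,y,z)$ is a symmetrized sum of products of three generators; by the construction of $\B'$ the monomials $yx\cdot z$, $y\cdot xz$, etc. are nonzero and linearly independent, so evaluating $\tilde\varphi$ on the odd generators produces a nonzero element. Meanwhile, every basis word of $\pnvv$ contains a factor from $\B'^2$ (it has the shape $(x_ix_j)\cdots$ acted on by further multiplications, i.e. it is a product involving a square bracket $(x_ix_j)$ multiplied further), and by metability such words vanish, or more precisely they lie in the annihilator and evaluate to zero on the relevant substitution. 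So a substitution that kills all the $\pnvv$-basis words but detects $\varphi$ will force $\lambda=0$.

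**The main body.** Once $\lambda = 0$, the statement reduces to: any nontrivial combination of basis words of $\pnvv$ is not an identity of $\Gob{\B}$ or $\Gob{\B'}$. I would now exploit that $\A$ sits inside both $\B$ and $\B'$ as a subalgebra, and that Lemma~\ref{lemma BasisWordsNilAlt} already establishes exactly this nonvanishing statement over $\Gob{\A}$. Since the superidentities of $\Gob{\B}$ and $\Gob{\B'}$ are contained in those of $\Gob{\A}$ (a subalgebra satisfies at least as many identities), any combination of $\pnvv$-basis words that is nonzero on $\Gob{\A}$ is automatically nonzero on $\Gob{\B}$ and on $\Gob{\B'}$. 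Thus the $\pnvv$-part is handled verbatim by the earlier lemma via the inclusion $\A\hookrightarrow\B,\B'$.

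**Where the difficulty lies.** The delicate point is the interaction between the two pieces: one must choose substitutions so that the detection of $\lambda$ does not interfere with, and is not masked by, the $\pnvv$-basis-word coefficients. The cleanest route is to argue by degree: if $n\neq 3$ there is no $\varphi$-term and the result is immediate from Lemma~\ref{lemma BasisWordsNilAlt} plus the subalgebra inclusion; if $n = 3$, I would first substitute the three odd generators of $\B'$ to isolate and kill $\lambda$ (using that all $\pnvv$-basis words, being divisible by a bracket $(x_ix_j)$ followed by further multiplication, vanish under the metabelian/annihilator relations while $\tilde\varphi$ survives), and only then invoke the reduced statement. The main obstacle is verifying that under the chosen odd substitution the type~1) and type~2) basis words really do evaluate to zero on $\Gob{\B'}$ while $\tilde\varphi$ does not; this is a finite check using the explicit multiplication table of $\B'$ and the Koszul sign rule, together with the observation that products in $\B'$ of length three involving a generator outside $\A$ are associative and annihilated by any fourth factor.
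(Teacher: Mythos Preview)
Your overall scaffolding is right: split $f_n=\lambda\varphi+g$ with $g$ a combination of $\pnvv$-basis words, observe that for $n\neq 3$ there is no $\varphi$-term, and use the inclusion $\A\hookrightarrow\B,\B'$ together with Lemma~\ref{lemma BasisWordsNilAlt}. The gap is in your treatment of $n=3$. You claim that at the substitution $(y,x,z)$ in $\B'$ the $\pnvv$-basis words all vanish ``by metability/annihilator relations'' while $\tilde\varphi$ survives. This is false: for instance the type~1) basis word $(x_1x_2)R_{x_3}$ evaluates to $(yx)\cdot z=yxz\neq 0$, and the type~2) word $(x_1\circ x_2)R_{x_3}$ (with the super-$\circ$ for two odd arguments becoming a commutator) gives $(yx-xy)\cdot z=yxz\neq 0$. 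These are degree-$3$ monomials; metability only kills products of two elements of $\B'^2$, not arbitrary triple products of generators. So your chosen substitution does not separate $\lambda$ from the coefficients of $g$, and in fact no substitution of generators of $\B'$ does: whenever some $(x_{\sigma(1)}x_{\sigma(2)})x_{\sigma(3)}$ is nonzero (which is needed for $\tilde\varphi\neq 0$), some basis word picks up the same value.

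The paper avoids this obstacle by reversing the order of elimination. The point you are missing is that $\A$ is an $\mathfrak N$-superalgebra (Lemma~\ref{lemma A V'-SuperOneOdd}), so $\tilde\varphi$ vanishes \emph{identically} on $\A$. Hence if $\tilde f_n=0$ holds on $\B$ (or $\B'$), restricting to the subalgebra $\A$ gives $\tilde g=0$ on $\A$, and Lemma~\ref{lemma BasisWordsNilAlt} forces $g=0$. Only then do you use the single evaluation $\tilde\varphi(e,x,e)=2\,exe\neq 0$ in $\B$ (respectively $\tilde\varphi(y,x,z)=yxz\neq 0$ in $\B'$) to conclude $\lambda=0$. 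So the interaction you were worried about is handled not by a clever substitution but by the structural fact that $\varphi$ is already an identity of~$\mathfrak N$.
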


\begin{proof}
By definition of ${\mathrm G}\left(\mathcal B\right)$ and
${\mathrm G}\left(\mathcal B'\right)$, taking into account
Lemma~\ref{lemma BasisWordsNilAlt}, it suffices to check that
$\Tilde{\varphi}(x_1,x_2,x_3)$ takes a nonzero value on some
elements of $\mathcal B$ and $\mathcal B'$. Indeed,
\[
\Tilde{\varphi}(e,x,e)=2\left(e\cdot x\right)\cdot e=2\,exe\neq
0;\qquad \Tilde{\varphi}(y,x,z)=\left(y\cdot x\right)\cdot
z=yxz\neq 0.
\]
\end{proof}

\smallskip

By Lemmas~\ref{lemma B V-Super} and \ref{lemma BasisWordsAlt}, we
have $ \mathcal V_{1,1}=\mathcal V_{0,3}=\Tilde{\mathcal V}. $
Combining Lemmas~\ref{lemma FreeVTwoOdd-Vv} and~\ref{lemma
BasisWordsAlt} with Theorem~\ref{theorem NilAlt2}, we obtain $
\mathcal V_{0,2}\subseteq\Tilde{\mathfrak N}=\mathcal
V_{0,1}\subset\Tilde{\mathcal V}. $

\smallskip

\subsection{Strictness of  inclusions $\V_n\subset\V_{n+1}$}

Finally, to complete the proof of Theorem~\ref{theorem Alt2}, it
suffices to verify the strictness of all inclusions in the first
row of the lattice $\lt{\V}$. In fact, it follows from
Lemmas~\ref{lemma IntersecX3Pnv} and~\ref{lemma
IndexNilpSvobNilAltn} that the index of nilpotency of
$\svob{\mathcal V}{X_n}$ for~$n\geqslant2$ is equal to~$n+2$.
Therefore, $\mathcal V_n\neq \mathcal V_{n+1}$ for all
$n\geqslant2$. It remains to notice that the variety $\V_1$ is
commutative and $\V_2$ is not.

Theorem~\ref{theorem Alt2} is proved.

\section{Jordan algebras}%
\label{Sec:JordanAlgebras}

Throughout this section, we set $\mathcal V=\mathrm{Jord}^{(2)}$.

\subsection{Additive basis of the free $\mathcal V\text{-algebra}$}%
\label{SubSec:AdditiveBasisOfTheFreeJordAlgebra}

It is
known~\cite{Drensky-Rashkova89,Jacobson68,Pchelintsev81,Sverchkov83}
that an additive basis of the free algebra $\svob{\mathcal V}{X}$
can be formed by the following monomials
\[
\left(x_{k}x_{i_1}\right)R_{x_{j_1}}R_{x_{i_2}}R_{x_{j_2}}\dots
R_{x_{i_t}}R'_{x_{j_t}},
\]
where
\[
k\geqslant i_1<i_2<\dots<i_t,\quad j_1<j_2<\dots<j_t,
\]
and the symbol $'$ means that the operator $R_{x_{j_t}}$ is absent
when the degree of monomial is even. In what follows, we use this
basis with no comments.

\subsection{Nilpotency of the free algebra $\svob{\mathcal V}{X_n}$}%
\label{SubSec:NilpotencyOfTheFreeAlgebra}

\begin{proposition}\label{proposition IdentSvobJord}
The algebra $\svob{\mathcal V}{X}$ satisfies the identities
\begin{align}
\label{eq JordR}
x^2R_yR_x&=0,\\
\label{eq JordCos}
(zt)R_xR_yR_x&=0,\\
\label{eq JordKvNilp} (zx)R_yR_xR_tR_z&=0.
\end{align}
\end{proposition}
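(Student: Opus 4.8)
The plan is to prove the three identities \eqref{eq JordR}--\eqref{eq JordKvNilp} directly from the additive basis of $\svob{\mathcal V}{X}$ displayed in Section~\ref{SubSec:AdditiveBasisOfTheFreeJordAlgebra}, combined with metability \eqref{eq Metab} and commutativity (so that only right multiplications $R$ survive on $(\svob{\mathcal V}{X})^2$). The guiding principle is that every basis monomial has the shape $(x_kx_{i_1})R_{x_{j_1}}R_{x_{i_2}}\dots$ with the two strict chains of indices $k\geqslant i_1<i_2<\dots<i_t$ and $j_1<j_2<\dots<j_t$; thus a relation holds in the free algebra precisely when, after multilinearization, it expands to zero in terms of these basis words. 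My strategy for each identity is to expand the operator word into this normal form and watch the constraints collapse.

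First I would treat \eqref{eq JordR}, $x^2R_yR_x=0$. Here the element $x^2=x\circ x$ (up to scalar, using commutativity) begins the monomial, and the two operators are $R_y,R_x$ with a \emph{repeated} variable $x$. The point is that the basis requires the $R$-indices split into an interleaved pair of strictly increasing sequences, and a repeated index $x$ occurring both in the starting square $x^2$ and in the operator tail forces two of the basis constraints to coincide, killing the word. Concretely I would linearize $x\mapsto x,x'$ and check the symmetric combination vanishes against the basis; the defining Jordan identity $(x^2,y,x)=0$ from \eqref{eq Jord}, read modulo \eqref{eq Metab}, is exactly the operator relation being asserted, so this reduces to recognizing \eqref{eq JordR} as the metabelian shadow of the linearized Jordan identity $x^2R_yR_x-\text{(lower terms)}=0$.

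Next, \eqref{eq JordCos}, $(zt)R_xR_yR_x=0$, and \eqref{eq JordKvNilp}, $(zx)R_yR_xR_tR_z=0$, are obtained by building on \eqref{eq JordR}. For \eqref{eq JordCos} the operator word $R_xR_yR_x$ again has the repeated index $x$ in outer positions; I would use the previously established \eqref{eq JordR} together with the linearized Jordan identity to commute the inner $R_y$ outward and merge the two $R_x$'s, so that the word reduces to a multiple of something of the form $(\,\cdot\,)R_{x^2}$-type that \eqref{eq JordR} annihilates, or directly violates the strict-increase condition on the $R$-indices. For \eqref{eq JordKvNilp} the operator tail $R_yR_xR_tR_z$ has length four with $z$ appearing both inside the starting product $zx$ and as the final operator; I expect to feed the first three operators into \eqref{eq JordCos} (or its linearizations) and use that the extra repetition of $z$ forces the index pattern to break one of the two monotone chains. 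The main obstacle will be bookkeeping the linearizations: each identity is a consequence of the single Jordan relation $(x^2,y,x)=0$ only after polarizing and reducing modulo \eqref{eq Metab}, and I must verify that every resulting term lands in the span of basis monomials whose index constraints are violated (hence are zero), rather than producing a genuinely new relation. I would therefore organize the proof as an induction on operator-word length, proving \eqref{eq JordR} first, then \eqref{eq JordCos}, then \eqref{eq JordKvNilp}, at each stage rewriting repeated-index operator words via the lower identity until the monotonicity constraints of the additive basis force the coefficient to vanish.
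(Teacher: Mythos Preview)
Your basis-driven strategy has a structural gap: knowing that the displayed monomials form an additive basis does not, by itself, give you a rewriting procedure that carries an arbitrary operator word to its basis expansion. To ``expand into basis words and watch the constraints collapse'' you would first need the straightening rules, and those rules are exactly consequences of the Jordan identity plus metability---relations of the same nature as \eqref{eq JordR}--\eqref{eq JordKvNilp} themselves. So the plan is not circular in a strict logical sense (the basis is cited from the literature), but it is not operational as written: the expansion step cannot be carried out without effectively deriving these identities along the way. Your phrases ``commute $R_y$ outward'' and ``merge the two $R_x$'s'' do not correspond to any relation you have available.

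The paper's proof bypasses the basis entirely and is much shorter. For \eqref{eq JordR} you already have the right observation: $(x^2,y,x)=0$ reads $x^2R_yR_x = x^2\cdot(yx)$, and the right side vanishes by \eqref{eq Metab}. The step you are missing is the passage to \eqref{eq JordCos} and \eqref{eq JordKvNilp}. Partially linearize \eqref{eq JordR} in $x$ to obtain
\[
2(zx)R_yR_x + x^2R_yR_z = 0,
\]
and then \emph{substitute a product} $z:=zt$: the second term becomes $(x^2\cdot y)\cdot(zt)=0$ by metability, leaving $(zt\cdot x)R_yR_x=(zt)R_xR_yR_x=0$, which is \eqref{eq JordCos}. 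For \eqref{eq JordKvNilp}, multiply the same linearized relation on the right by $R_tR_z$: the term $x^2R_yR_zR_tR_z=(x^2\cdot y)R_zR_tR_z$ vanishes by \eqref{eq JordCos} (applied with $z':=x^2$, $t':=y$), and what remains is $2(zx)R_yR_xR_tR_z=0$. The mechanism throughout is linearization followed by substituting an element of the square and invoking metability---not reduction to a normal form.
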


\begin{proof}
First we stress that~\eqref{eq JordR} is a direct consequence of
metability and~\eqref{eq Jord}. Further, taking into account
commutativity, we write the partial linearization of~\eqref{eq
JordR} in the form
\begin{equation}\label{eq LinJordR}
2(zx)R_yR_x+x^2R_yR_z=0.
\end{equation}
Then by setting $z:=zt$ in~\eqref{eq LinJordR}, in view of
metability, we get~\eqref{eq JordCos}. Finally,
multiplying~\eqref{eq LinJordR} by $R_tR_z$ and applying~\eqref{eq
JordCos}, we obtain~\eqref{eq JordKvNilp}.
\end{proof}

\begin{lemma}\label{lemma IndexNilpSvobJordn}
The algebra $\svob{\V}{X_n}$ is nilpotent of index~$2n+2$.
\end{lemma}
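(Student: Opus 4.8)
The plan is to read the nilpotency index straight off the explicit additive basis of $\svob{\V}{X}$ recorded above. Since $\svob{\V}{X_n}$ is a graded algebra generated in degree one, its $m$-th power is the sum of its homogeneous components of degree at least $m$; hence both halves of the statement amount to locating the largest degree carried by a nonzero basis word in the $n$ variables $x_1,\dots,x_n$. Accordingly I would prove two things: that no admissible basis word has degree exceeding $2n+1$ (giving ${\left(\svob{\V}{X_n}\right)}^{2n+2}=0$), and that one such word of degree $2n+1$ is nonzero (giving ${\left(\svob{\V}{X_n}\right)}^{2n+1}\neq0$).

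For the first point I would take a basis word
\[
\left(x_{k}x_{i_1}\right)R_{x_{j_1}}R_{x_{i_2}}R_{x_{j_2}}\dots R_{x_{i_t}}R'_{x_{j_t}}
\]
and merely count degrees. It carries the factor $x_k$, the $t$ factors $x_{i_1},\dots,x_{i_t}$, and the $t$ or $t-1$ factors $x_{j_1},\dots,x_{j_t}$ according as the degree is odd or even, so its degree equals $2t+1$ or $2t$. The admissibility condition $k\ge i_1<i_2<\dots<i_t$ forces $i_1,\dots,i_t$ to be $t$ distinct indices drawn from $\{1,\dots,n\}$, whence $t\le n$ and the degree is at most $2n+1$ in either parity. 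Thus every homogeneous component of $\svob{\V}{X_n}$ of degree at least $2n+2$ vanishes, and ${\left(\svob{\V}{X_n}\right)}^{2n+2}=0$.

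For the second point I would exhibit the extremal word, taking $t=n$. Then $i_1<\dots<i_n$ and $j_1<\dots<j_n$ must each exhaust $\{1,\dots,n\}$, i.e. $i_s=j_s=s$, and choosing $k=1$ yields the admissible word
\[
x_1^2 R_{x_1}R_{x_2}R_{x_2}R_{x_3}R_{x_3}\dots R_{x_n}R_{x_n}
\]
of degree $2n+1$. As a member of the stated basis it is nonzero, so ${\left(\svob{\V}{X_n}\right)}^{2n+1}\neq0$, and combining the two points shows $\svob{\V}{X_n}$ is nilpotent of index exactly $2n+2$.

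There is no genuinely hard step once the additive basis is in hand: the argument is a degree count controlled by the single inequality $t\le n$, and the identities of Proposition~\ref{proposition IdentSvobJord} intervene only implicitly, through their role in reducing arbitrary monomials to admissible basis words. The only care needed is bookkeeping — checking that the extremal word with $t=n$ really meets all the ordering constraints (it does, and $k$ may in fact be chosen freely), and invoking the linear independence of the cited basis to be sure this maximal word is actually nonzero rather than merely nonzero-looking.
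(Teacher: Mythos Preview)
Your argument is correct. The lower bound is handled exactly as in the paper: you exhibit the same extremal basis word
\[
x_1^2 R_{x_1} R_{x_2}^2 R_{x_3}^2 \cdots R_{x_n}^2
\]
of degree $2n+1$ and invoke the linear independence of the stated basis to conclude it is nonzero. For the upper bound, however, the paper does not read the degree cap off the basis; instead it applies the identities of Proposition~\ref{proposition IdentSvobJord} directly (identity~\eqref{eq JordR} bounds repetitions of the leading variable, \eqref{eq JordCos} forbids $R_xR_yR_x$ patterns, and \eqref{eq JordKvNilp} handles the mixed case), and from these deduces ${\left(\svob{\V}{X_n}\right)}^{2n+2}=0$. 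Your route is the more economical one once the additive basis is taken for granted, since the constraint $i_1<i_2<\dots<i_t$ immediately forces $t\le n$ and hence degree $\le 2n+1$; the paper's route is more self-contained in that it does not lean on the basis theorem for this half, only for the nonvanishing half. Both are perfectly valid, and the identities you note as ``intervening only implicitly'' are precisely what the paper makes explicit.
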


\begin{proof}
Identities~\eqref{eq JordR}--\eqref{eq JordKvNilp} imply
immediately that ${\left(\svob{\mathcal V}{X_n}\right)}^{2n+2}=0$.
Therefore it remains to note that the element
\[
x^2_1R_{x_1}R^2_{x_2}R^2_{x_3}\dots R^2_{x_n}
\]
of the additive basis of $\svob{\V}{X}$ is a nonzero element of
${\left(\svob{\mathcal V}{X_n}\right)}^{2n+1}$.
\end{proof}

Lemma~\ref{lemma IndexNilpSvobJordn} yields that all inclusions
$\V_n\subset\V_{n+1}$ are strict and, consequently, the basic rank
of $\V$ is infinite.

\subsection{Estimate of basic superrank of $\V$}%
\label{SubSec:EqualityV0,2V}

First note that superizing~\eqref{eq JordCos}, one can prove the
following
\begin{proposition}\label{proposition RelJordSvobSuper}
Let $\svobs{\V}{Z}$ be a free $\V\text{-superalgebra}$ on an
arbitrary set $Z$ of even and odd generators. Then the operators
of multiplication acting on ${\bigl(\svobs{\mathcal
V}{Z}\bigr)}^2$ satisfy the relation
\begin{equation}\label{rel SupJordCos}
R_xR_yR_z={(-1)}^{|x||y|+|x||z|+|y||z|+1}R_zR_yR_x,
\end{equation}
where $x,y,z\in Z$ and $|x|$ denotes the parity of~$x$.
\end{proposition}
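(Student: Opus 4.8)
The plan is to obtain the superization of the relation $R_xR_yR_z = R_zR_yR_x$ that follows from \eqref{eq JordCos}, by carefully applying the Koszul rule. First I would establish the non-graded precursor. Taking \eqref{eq JordCos}, namely $(zt)R_xR_yR_x = 0$, and combining it with the partial linearization \eqref{eq LinJordR}, I would derive in $\svob{\V}{X}$ the symmetry relation $(zt)R_xR_yR_w = (zt)R_wR_yR_x$; equivalently, on $\bigl(\svob{\V}{X}\bigr)^2$ the operator identity
\[
R_xR_yR_w = R_wR_yR_x.
\]
Indeed, linearizing \eqref{eq JordCos} in the repeated variable $x$ gives $R_xR_yR_w + R_wR_yR_x = 0$ modulo terms that vanish by \eqref{eq JordCos} itself, but a cleaner route is to observe that the fully linearized consequence of the commutative-metabelian Jordan identity is skew in the outer two multiplication variables up to sign, which yields precisely the above interchange law for three right multiplications.

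Next I would superize this multilinear operator identity. The key point is that \eqref{eq JordCos} is multilinear, so its superization is governed entirely by the Koszul (Kaplansky) rule cited in the introduction: each time a variable of parity $i$ is moved past a variable of parity $j$ one inserts the sign $(-1)^{ij}$. To pass from the arrangement $x,y,z$ (acting left to right) to the reversed arrangement $z,y,x$, the variable $z$ must cross both $y$ and $x$, the variable $y$ must cross $x$, so the total accumulated sign is $(-1)^{|x||y|+|x||z|+|y||z|}$. Combined with the sign already present in the ungraded relation $R_xR_yR_z = R_zR_yR_x$ — which, written in the form appearing in \eqref{rel SupJordCos}, contributes the extra $+1$ in the exponent — this produces exactly
\[
R_xR_yR_z = (-1)^{|x||y|+|x||z|+|y||z|+1}R_zR_yR_x
\]
on $\bigl(\svobs{\V}{Z}\bigr)^2$.

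I expect the main obstacle to be bookkeeping the sign correctly, in particular pinning down the origin of the $+1$ in the exponent. The superization of a multilinear identity replaces each monomial $w$ by the corresponding superpolynomial $\Tilde w$ with signs determined by the permutation needed to bring the variables into a fixed reference order; so I would fix the reference order to be $x,y,z$ and compute the sign attached to the reversed monomial $R_zR_yR_x$ relative to it. The subtlety is that the reversal permutation $(1\,3)$ on three letters is odd, and this underlying parity of the permutation — not just the pairwise crossings — is what contributes the $+1$ to the exponent once the crossing signs $(-1)^{|x||y|+|x||z|+|y||z|}$ are factored out. I would verify the formula by checking the all-even specialization ($|x|=|y|=|z|=0$), which must recover the classical relation $R_xR_yR_z = R_zR_yR_x$ from \eqref{eq JordCos}: there the sign becomes $(-1)^{0+1}$, so I must be careful that the ungraded relation is stated with the appropriate overall sign. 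Once this single sign is confirmed on the even part, the remaining parity cases follow mechanically from the Koszul rule, completing the proof.
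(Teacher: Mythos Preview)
Your approach---linearize \eqref{eq JordCos} and then superize---is exactly what the paper indicates. The gap is a persistent sign confusion that you notice but never resolve.

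The linearization of \eqref{eq JordCos} in the repeated variable $x$ gives, on $\bigl(\svob{\V}{X}\bigr)^2$,
\[
R_xR_yR_w + R_wR_yR_x = 0,\qquad\text{i.e.}\qquad R_xR_yR_w = -\,R_wR_yR_x,
\]
with a \emph{minus} sign. You write this correctly once, but elsewhere you state and use the relation as $R_xR_yR_w = R_wR_yR_x$, and then look for a separate source for the $+1$ in the exponent (``the underlying parity of the permutation''). There is no such separate source: the Koszul rule only produces the parity-dependent factor $(-1)^{|x||y|+|x||z|+|y||z|}$, and the remaining $+1$ comes entirely from the minus sign already present in the ungraded linearized identity. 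Your own all-even sanity check confirms this: when $|x|=|y|=|z|=0$ the formula \eqref{rel SupJordCos} reads $R_xR_yR_z=-R_zR_yR_x$, which is the correct ungraded relation, not the sign-free version you keep reverting to.

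Once you fix the ungraded relation to $R_xR_yR_z=-R_zR_yR_x$ and drop the spurious ``permutation parity'' explanation, the superization is a one-line Koszul computation and the proof is complete.
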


\smallskip

Let $U=U_0+U_1$ be the superalgebra
\[
U_0=\{0\},\quad\; U_1=F\cdot x+F\cdot y
\]
with null multiplication and $M=M_0+M_1$ be the vector space
\[
M_0=F\cdot a,\quad\; M_1=F\cdot v.
\]
Consider a split null extension $\A= U\dotplus M$ with a
supercommutative multiplication such that all nonzero products of
the basis elements of $\A$, up to the order of factors, are the
following:
\[
a\cdot x=v,\quad\; v\cdot y=a.
\]
The supercommutativity rule means that even elements commute with all elements of superalgebra
but products of two odd elements are anticommutative.

\begin{lemma}\label{lemma-A-Jord2super}
$\A$ is a $\V\text{-superalgebra}$ generated by two odd elements.
\end{lemma}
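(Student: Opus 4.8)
The plan is to verify directly that the Grassmann envelope $\Gob{\A}$ satisfies the defining identities of $\mathcal{V}=\mathrm{Jord}^{(2)}$, namely commutativity $[x,y]=0$, the Jordan identity $(x^2,y,x)=0$, and metability $xy\cdot zt=0$; equivalently, one checks that $\A$ satisfies the corresponding superidentities. Since $\A=U\dotplus M$ is a split null extension with $U^2=0$ and $M$ an $U$-bimodule with $M\cdot M=0$, the superalgebra $\A$ is automatically metabelian: any product of four elements must pair two elements of $M$ or involve $U\cdot U$, both of which vanish. Thus the superization of \eqref{eq Metab} holds trivially, and this is the easiest of the three checks.

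For the remaining two identities I would exploit that $\A$ is defined to be \emph{supercommutative}, meaning even elements are central and two odd elements anticommute. First I would confirm that the given products $a\cdot x=v$, $v\cdot y=a$ are consistent with supercommutativity: since $a$ is even it commutes with $x$, so $x\cdot a=v$ as well, and since $v,y$ are both odd we get $y\cdot v=-a$. With supercommutativity in hand, the superization $\Tilde{[x,y]}=0$ is satisfied by construction, so the commutativity identity transfers correctly to the envelope. The one genuinely computational point is the superized Jordan identity $\Tilde{(x^2,y,x)}=0$: here I would use that every nonzero triple product in $\A$ already factors through $M$ (because $U^2=0$), and that $\A^2\subseteq M$ lies in the annihilator modulo the single multiplication chains $x\mapsto a\mapsto v$ and $v\mapsto$ nothing further. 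Concretely, the only way to build a degree-three associator is via $a\cdot x\cdot{}$ or $v\cdot y\cdot{}$ patterns, and I would tabulate these few cases, using Proposition~\ref{proposition RelJordSvobSuper} (the relation \eqref{rel SupJordCos}) to collapse the operator products $R_xR_yR_z$ up to sign. Because $M\cdot M=0$, any such triple product terminates after at most two multiplications, forcing the associator to vanish.

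Finally, to show $\A$ is generated by the two odd elements $x,y$, I would observe that $a=v\cdot y$ and $v=a\cdot x$, so starting from $x,y$ one recovers $v=x\cdot a$ once $a$ is available, but more directly $a\cdot x = v$ together with $v\cdot y = a$ gives a cyclic dependence; to break it I would note $x\cdot y$ and the products $x\cdot(x\cdot y)$ etc.\ must be computed explicitly. Since $x,y$ are odd with $x^2=y^2=0$ in the null part $U$, the subalgebra they generate must contain $v=a\cdot x$ only after $a$ is produced, so I would instead produce $a$ and $v$ as iterated products of the generators by tracking the short multiplication table: the generated subalgebra contains $x,y$, hence $v=x\cdot a$ forces me to first exhibit $a$, which I obtain as $a = v\cdot y=(x\cdot a)\cdot y$—circular—so the correct route is to note the problem data already lists $a\cdot x=v$ and $v\cdot y=a$ as the \emph{only} nonzero products, meaning $\{x,y\}$ generates $\mathrm{span}\{x,y,v,a\}=\A$ precisely when these products are reachable; I expect the main obstacle to be confirming this reachability cleanly, which amounts to checking that $x\cdot(\text{something})$ and $y\cdot(\text{something})$ eventually yield both $a$ and $v$ rather than collapsing to zero. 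The resolution is that $\A$ has no unit and the generators are odd, so I would verify generation by direct inspection of all products of $x$ and $y$ of length up to three, which is a short finite computation.
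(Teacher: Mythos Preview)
Your treatment of metability and supercommutativity is fine, and checking the Jordan superidentity via relation~\eqref{rel SupJordCos} is exactly what the paper does (and it is indeed trivial here, since the only nonzero operator words $R_{u_1}R_{u_2}R_{u_3}$ on $M$ are $R_xR_yR_x$ and $R_yR_xR_y$, which are self-symmetric under the required swap).

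The genuine gap is in the generation claim. You propose to use $x$ and $y$ as the two odd generators, notice the circularity, and then assert that ``direct inspection of all products of $x$ and $y$ of length up to three'' will settle it. It will not: $x,y\in U_1$ and $U$ has null multiplication, so $x\cdot y=x^2=y^2=0$, and therefore \emph{every} product built solely from $x$ and $y$ vanishes. The subalgebra generated by $\{x,y\}$ is just the two-dimensional space $F\cdot x+F\cdot y$, and you never reach $a$ or $v$.

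The paper resolves this by choosing the generators $v+x$ and $y$ (both odd). Then $(v+x)\cdot y=v\cdot y=a$, next $a\cdot(v+x)=a\cdot x=v$, and finally $x=(v+x)-v$, so all four basis elements are recovered. The key idea you are missing is that at least one generator must have a nonzero component in $M$; otherwise the split-null structure traps you in $U$.
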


\begin{proof}
By definition, $\A$ is metabelian, supercommutative, and can be
generated by the elements $v+x$ and $y$. It remains to check that
$\A$ is a Jordan superalgebra. Note that by construction of $\A$
it suffices to verify that relation~\eqref{rel SupJordCos} holds
in $\A$. But it follows trivially from the definition of
multiplication in $\A$.\footnote{One can also note that $\A$ is a
special Jordan superalgebra isomorphic to a subalgebra of the
matrix superalgebra $M^{(+)}_{2,2}$ (see~\cite{Shestakov91}) with
the generators $e_{13}-e_{24}+e_{31}-e_{42}$ and
$e_{13}+e_{14}+e_{24}+e_{31}+e_{32}+e_{42}$.}
\end{proof}

\begin{lemma}\label{lemma-UpperBoundJord}
The variety $\V$ is generated by ${\mathrm G}\left(\mathcal
A\right)$.
\end{lemma}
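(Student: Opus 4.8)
I need to prove that $\V = \mathrm{Jord}^{(2)}$ is generated by $\Gob{\A}$, where $\A$ is the two‑dimensional‑odd‑part superalgebra just constructed. Since $\A$ is generated by two odd elements (Lemma~\ref{lemma-A-Jord2super}), this is exactly the statement $\V_{0,2} = \Tilde{\V}$, the key upper bound on the basic superrank. The natural strategy is to show that every multilinear identity of $\Gob{\A}$ is already an identity of the whole variety $\V$, i.e.\ that $\Gob{\A}$ satisfies no nontrivial multilinear polynomial relation beyond those forced by $\V$. Because we are in characteristic~$0$, it suffices to work space‑by‑space with the multilinear components $\pnv$.

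**Plan.** First I would nail down an explicit additive basis of $\pnv$. Starting from the basis of the free $\V$‑algebra recorded in Section~\ref{SubSec:AdditiveBasisOfTheFreeJordAlgebra}, together with commutativity, metability, and the operator relations \eqref{eq JordR}--\eqref{eq JordKvNilp}, each multilinear monomial of $\pnv$ reduces to a standard form $(x_kx_{i_1})R_{x_{j_1}}\dots$ with the index constraints $k\geqslant i_1 < i_2 < \dots$ and $j_1 < j_2 < \dots$. The key superalgebra relation \eqref{rel SupJordCos} of Proposition~\ref{proposition RelJordSvobSuper} is the superized form of these, so it controls how the operators $R_{x_i}$ may be permuted in the Grassmann envelope. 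The plan is then to evaluate the superization $\Tilde{f}$ of an arbitrary element $f\in\pnv$ on $\A$ (equivalently, to evaluate $f$ itself on $\Gob{\A}$) and read off, coefficient by coefficient, that if $\Tilde{f}$ vanishes on $\A$ then $f=0$ in $\pnv$.

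**Evaluation step.** Concretely I would feed in the generators: set a chosen subset of the variables equal to the odd generators $x,y$ of $\A$ (and the remaining ones to whichever generator is available), and compute the resulting element of $\A$ using the single nontrivial products $a\cdot x=v$, $v\cdot y=a$. Because $\A$ has a one‑dimensional even part ($F\cdot a$) and a one‑dimensional odd module part ($F\cdot v$), each specialization lands in a one‑dimensional space, and the multilinearity of $f$ converts the vanishing of $\Tilde f$ into a linear system in the coefficients of the basis words of $\pnv$. By choosing specializations that separate the basis monomials — exploiting the alternation of the $R$‑operators forced by \eqref{rel SupJordCos} and the strict inequalities among the indices — I expect to force every coefficient to vanish. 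This is the same mechanism used in Lemma~\ref{lemma BasisWordsNilAlt}, only adapted to the Jordan product and to the action $a\cdot x=v$, $v\cdot y=a$.

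**Main obstacle.** The delicate point is bookkeeping the Koszul signs: each basis word of $\pnv$ acquires a sign $(-1)^{\cdots}$ under superization depending on how many odd variables it threads past one another, and these signs must be tracked so that distinct basis words give \emph{linearly independent} values on $\A$ rather than accidentally cancelling. In other words, the hard part is to verify that the matrix of the linear system (basis words against specializations) is nonsingular, which amounts to confirming that the alternation pattern of \eqref{rel SupJordCos} is exactly matched by the action of $x$ and $y$ on the module $M=F\cdot a+F\cdot v$. Once that nondegeneracy is established, every nontrivial multilinear polynomial takes a nonzero value on $\Gob{\A}$, so $\Gob{\A}$ and the free $\V$‑algebra have the same multilinear identities; since we are over a field of characteristic $0$, this forces $\V_{0,2}=\Tilde{\V}$, i.e.\ $\V=\vr\,\Gob{\A}$, completing the proof.
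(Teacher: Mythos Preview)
Your outline is essentially the paper's approach: take an arbitrary linear combination $f_n$ of the basis monomials of $\pnv$ described in Section~\ref{SubSec:AdditiveBasisOfTheFreeJordAlgebra}, assume $\Tilde f_n=0$ on $\A$, and kill the coefficients by well-chosen substitutions. Two points deserve sharpening.

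First, a concrete correction to your substitution scheme. You write that you will ``set a chosen subset of the variables equal to the odd generators $x,y$ of $\A$''. Taken literally this fails: $U$ has null multiplication, so any monomial $(x_{k}x_{i_1})R_{x_{j_1}}\cdots$ with all variables in $\{x,y\}$ already dies at the first product. The paper's trick is to use the \emph{even} module element $a$ for exactly one variable. For the basis word indexed by $I=\{k;\,i_1<\dots<i_t;\,j_1<\dots<j_{t'}\}$ one sets
\[
x_k=a,\qquad x_{i_1}=\dots=x_{i_t}=x,\qquad x_{j_1}=\dots=x_{j_{t'}}=y.
\]
Because the only nonzero products are $a\cdot x=v$ and $v\cdot y=a$, a monomial survives this substitution only if it starts at $a$ and then alternates $R_xR_yR_xR_y\cdots$; together with the ordering constraints $k>i_1<i_2<\cdots$ and $j_1<j_2<\cdots$ this forces the surviving monomial to have index set exactly $I$. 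Hence $\Tilde f_n$ evaluates to $\pm\alpha_I\cdot a$ or $\pm\alpha_I\cdot v$, and $\alpha_I=0$.

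Second, this means your ``main obstacle'' largely evaporates. There is no genuine linear system to invert and no delicate Koszul bookkeeping: the substitutions are chosen so that the matrix of values is diagonal, one coefficient at a time. The only sign that appears is the global $\pm1$ in front of $\alpha_I$, which is irrelevant. So the argument is considerably simpler than you anticipate, once the substitution is made with $a$ rather than only with the odd generators.
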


\begin{proof}
In view of Lemma~\ref{lemma-A-Jord2super} it suffices to prove
that ${\mathrm G}\left(\mathcal A\right)$ doesn't satisfy any
nontrivial identity in $\V$. Consider an arbitrary linear
combination of basis monomials of $\pnv$:
\[
f_n= \sum_{I}\alpha_{I}
\left(x_{k}x_{i_1}\right)R_{x_{j_1}}R_{x_{i_2}}R_{x_{j_2}}\dots
R_{x_{i_t}}R'_{x_{j_t}},
\]
where $t=\left[\frac{n}{2}\right]$ and $I$ runs all possible sets
$k,i_1,\dots,i_t,j_1,\dots,j_t$ of indices such that
\[
k>i_1<i_2<\dots<i_t,\quad j_1<j_2<\dots<j_t.
\]
Suppose that $\A$ satisfies the superidentity $\Tilde{f}_n=0$. Let
us show that all the scalars in $f_n$ are zero. We fix
$I=\left\{k,i_1,\dots,i_t,j_1,\dots,j_t\right\}$ and make the
substitution
\[
x_k=a,\quad x_{i_1}=\dots=x_{i_t}=x,\quad x_{j_1}=\dots=x_{j_t}=y.
\]
Then it is not hard to see that $\Tilde{f}_n$ turns out to be
proportional with the coefficient $\pm\alpha_{I}$ to the element
\[
(a\cdot x)R_yR_x\cdots= \left\{
\begin{aligned}
v,\quad&\text{if $n$ is even},\\
a,\quad&\text{if $n$ is odd}
\end{aligned}\right.
\]
that is nonzero in $\A$. Therefore, $\alpha_{I}=0$.
\end{proof}

Lemma~\ref{lemma-UpperBoundJord} implies that
$\V_{0,2}=\Tilde{\V}$. Thus, to complete the proof of
Theorem~\ref{theorem Jord2}, it remains to show that the chain
\[
\V_{0,1}\subset\V_{1,1}\subset\V_{2,1}\subset\dots\subset\V_{r,1}\subset\dots\subset\Tilde{\V}
\]
ascends strictly and all the inclusions $\V_{n,0}\subset\V_{n,1}$
are also strict.

\subsection{Strictness of the inclusions $\V_{n,0}\subset\V_{n,1}$ and $\V_{n-1,1}\subset\V_{n,1}$}%
\label{SubSec:InclusionsJordColumnsLines}

Let $U^{(n)}=U^{(n)}_0+U^{(n)}_1$ be the superalgebra
\[
U^{(n)}_0=\sum_{i=1}^n F\cdot e_i,\quad\; U^{(n)}_1=F\cdot y
\]
with null multiplication and $\A^{(n)}=\A^{(n)}_0+\A^{(n)}_1$ be
an associative superalgebra with the unit~$\mathbf 1$ generated by
the even elements $\mathbf{1},\mathbf{e}_1,\dots,\mathbf{e}_n$ and
one odd element $\mathbf y$ with the defining relations
\[
\mathbf{e}_i\mathbf{e}_j=0,\quad\; \mathbf{y}^2=\mathbf1, \quad\;
\mathbf{e}_i\mathbf{y}\mathbf{e}_j=-\mathbf{e}_j\mathbf{y}\mathbf{e}_i.
\]
Consider a split null extension $\B^{(n)}= U^{(n)}\dotplus
\A^{(n)}$ with a supercommutative multiplication induced by the
actions
\begin{gather*}
\mathbf{1}\cdot e_i=0, \quad\; \mathbf{a}\cdot
e_i=\mathbf{a}\mathbf{e}_i, \quad
\mathbf1\neq\mathbf{a}\in\A^{(n)},\\
\mathbf{b}\cdot y=\mathbf{b}\mathbf{y}, \quad
\mathbf{b}\in\A^{(n)}.
\end{gather*}

\begin{lemma}\label{lemma-An-Jord2super}
$\B^{(n)}$ is a $\V\text{-superalgebra}$ generated by $n$ even and
one odd elements.
\end{lemma}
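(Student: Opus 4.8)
The plan is to verify that $\B^{(n)}$ is a $\V$-superalgebra (i.e. its Grassmann envelope is metabelian and Jordan) and then exhibit $n$ even and one odd generator. By construction $\B^{(n)} = U^{(n)} \dotplus \A^{(n)}$ is a split null extension with supercommutative multiplication, so it is automatically supercommutative; I would first record that any product of three factors in which at least one factor lies in $U^{(n)}$ vanishes (since $U^{(n)}$ has null multiplication and acts only by the displayed single actions $\mathbf a \cdot e_i$ and $\mathbf b \cdot y$, which land in $\A^{(n)}$), while products of three factors all lying inside $\A^{(n)}$ are governed by the associative structure of $\A^{(n)}$. This is the mechanism that makes $\B^{(n)}$ metabelian: a product $(pq)(rs)$ always contains at least one $U^{(n)}$-factor inside each of the two bracketed pairs or else reduces to a degree-$4$ product inside the associative algebra $\A^{(n)}$, and I would check both cases give zero.

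Next I would verify that $\B^{(n)}$ satisfies the Jordan superidentities. As in Lemma~\ref{lemma-A-Jord2super}, by supercommutativity and metability it suffices to check that relation~\eqref{rel SupJordCos} from Proposition~\ref{proposition RelJordSvobSuper} holds on $(\B^{(n)})^2$, since that relation is exactly the superization of~\eqref{eq JordCos}, and together with metability and supercommutativity it captures membership in $\Tilde{\V}$. For triples of basis elements in which some factor lies in $U^{(n)}$, both sides of~\eqref{rel SupJordCos} vanish by the null-multiplication remark above; for triples lying in $\A^{(n)}$ the relation follows from associativity together with the anticommutation rule $\mathbf e_i \mathbf y \mathbf e_j = -\mathbf e_j \mathbf y \mathbf e_i$ and $\mathbf y^2 = \mathbf 1$, which are precisely what is needed to flip a triple product $R_x R_y R_z$ into $R_z R_y R_x$ with the prescribed sign. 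I would check the parity bookkeeping on the exponent $|x||y|+|x||z|+|y||z|+1$ against the signs produced by moving the odd element $\mathbf y$ past the even $\mathbf e_i$.

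Finally I would produce the generators. The even generators should be the images of the $e_i$ together with an element built from $\mathbf 1 + \mathbf e_i$ or similar, and the odd generator should combine $y$ with $\mathbf y$; concretely I would take the $n$ even elements to involve $e_1,\dots,e_n$ and one odd element built from $y$ and $\mathbf y$, then show that repeated multiplication recovers every basis element $\mathbf e_i$, $\mathbf e_i \mathbf y \mathbf e_j$, and the $U^{(n)}$-part, mirroring the generation argument of Lemma~\ref{lemma A V'-SuperOneOdd}. The main obstacle I anticipate is precisely this generation step: one must check that the chosen handful of generators, under the supercommutative split-null multiplication, actually span all of $\B^{(n)}$ — in particular recovering the skew-symmetric elements $\mathbf e_i \mathbf y \mathbf e_j$ requires exploiting the relation $\mathbf e_i \mathbf y \mathbf e_j = -\mathbf e_j \mathbf y \mathbf e_i$ and the action formulas carefully, and getting a single odd generator to produce both the $U^{(n)}_1$ part and the $\A^{(n)}$-odd part simultaneously is the delicate point. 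The verification of the superidentities, by contrast, I expect to be routine once the null-multiplication reductions are in place.
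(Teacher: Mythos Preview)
Your overall outline matches the paper's: verify metability, verify the Jordan superidentity via relation~\eqref{rel SupJordCos}, and exhibit generators. However, you have the roles of $U^{(n)}$ and $\A^{(n)}$ in the split null extension reversed, and this breaks both of the first two steps.

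In the split null extension $\B^{(n)}=U^{(n)}\dotplus\A^{(n)}$, the part $U^{(n)}$ is the \emph{algebra} (with null multiplication) and $\A^{(n)}$ is the \emph{module}; products of two module elements are zero by definition. Hence $(\B^{(n)})^2\subseteq\A^{(n)}$, and metability is automatic: $(pq)(rs)$ is a product of two elements of $\A^{(n)}$, so it vanishes. There is no case ``reduces to a degree-$4$ product inside the associative algebra $\A^{(n)}$''---that product is simply zero. Likewise, when checking~\eqref{rel SupJordCos} on $(\B^{(n)})^2$, an operator $R_x$ with $x\in\A^{(n)}$ already kills everything, so the only nontrivial case is when all three operators come from $U^{(n)}$, i.e.\ $x,y,z\in\{e_1,\dots,e_n,y\}$. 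This is the opposite of your case split. The paper then checks exactly these cases, using that $R_{e_i}$ and $R_y$ act on $\A^{(n)}$ as right multiplication by $\mathbf e_i$ and $\mathbf y$: one has $R_{e_i}R_{e_j}=0$ (since $\mathbf e_i\mathbf e_j=0$), $R_y^2=\mathrm{id}$ (since $\mathbf y^2=\mathbf 1$), and $R_{e_i}R_yR_{e_j}=-R_{e_j}R_yR_{e_i}$ (since $\mathbf e_i\mathbf y\mathbf e_j=-\mathbf e_j\mathbf y\mathbf e_i$).

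For the generators, the paper takes $\mathbf 1+e_1,\,e_2,\dots,e_n,\,y$: the odd generator is just $y$, not a combination of $y$ and $\mathbf y$. From $(\mathbf 1+e_1)\cdot y=\mathbf y$ one gets $\mathbf y$, then $\mathbf y\cdot y=\mathbf 1$, and from there all of $\A^{(n)}$ via the module action. So the ``delicate point'' you anticipate does not arise.
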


\begin{proof}
By definition, $\B^{(n)}$ is metabelian and supercommutative.
Moreover, it is not hard to see that $\B^{(n)}$ can be generated
by the elements $\mathbf1+e_1,e_2,\dots,e_n,y$. Thus it remains to
prove that $\B^{(n)}$ is a Jordan superalgebra. Actually by
definition of multiplication in $\B^{(n)}$, taking into account
that $\left(\B^{(n)}\right)^2\subseteq\A^{(n)}$, it suffices to
verify that relation~\eqref{rel SupJordCos} holds for the
operators of the form $R_{u_1}R_{u_2}R_{u_3}$ acting on $\A^{(n)}$
for $u_i\in\{e_1,\dots,e_n,y\}$. Indeed, besides the trivial case
$u_1=u_3=y$, we check that $R_{e_i}R_{e_j}$ annihilates
$\A^{(n)}$:
\[
\mathbf{a}R_{e_i}R_{e_j}=\mathbf{a}\mathbf{e}_i\mathbf{e}_j=0;
\]
$R^2_y$ acts on $\A^{(n)}$ identically:
\[
\mathbf{a}R^2_y=\mathbf{a}\mathbf{y}^2=\mathbf{a};
\]
and the action of $R_{e_i}R_yR_{e_j}$ on $\A^{(n)}$ is
skew-symmetric with respect to $e_i,e_j$:
\[
\mathbf{a}R_{e_i}R_yR_{e_j}
=\mathbf{a}\mathbf{e}_i\mathbf{y}\mathbf{e}_j
=-\mathbf{a}\mathbf{e}_j\mathbf{y}\mathbf{e}_i
=-\mathbf{a}R_{e_j}R_yR_{e_i}.
\]
\end{proof}

\smallskip

By Lemmas~\ref{lemma IndexNilpSvobJordn}
and~\ref{lemma-An-Jord2super}, in view of non-nilpotency of
$\B^{(n)}$, we obtain $\V_{n,0}\subset\V_{n,1}$.

\medskip

Further, let us denote by
$f_n=f_n\left(a,b,x_1,\dots,x_{2n}\right)$ the polynomial
\[
f_n=\left(ab\right)\left(R_{x_1}\circ R_{x_2}\right)\dots
\left(R_{x_{2n-1}}\circ R_{x_{2n}}\right),
\]
where $R_x\circ R_y=R_xR_y+R_yR_x$.

\begin{lemma}\label{lemma-LowBoundJord}
The free $\V\text{-superalgebra}$ on $n-1$ even and one odd
generators satisfies the superidentity $\Tilde{f}_{n}=0$.
\end{lemma}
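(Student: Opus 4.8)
The plan is to pass to the Grassmann envelope and argue by a reordering/counting argument on the operators of right multiplication. Write $\mathcal F=\svobs{\V}{Z}$ for the free $\V\text{-superalgebra}$ on even generators $g_1,\dots,g_{n-1}$ and one odd generator $h$. Since $f_n$ is multilinear, the superidentity $\Tilde f_n=0$ holds in $\mathcal F$ if and only if $f_n$ vanishes in $\Gob{\mathcal F}$; equivalently, it suffices to evaluate $\Tilde f_n$ on every assignment of the homogeneous generators $g_1,\dots,g_{n-1},h$ to the variables $a,b,x_1,\dots,x_{2n}$. By metability the value lands in $\mathcal F^2$, so the $2n$ operators $R_{x_1},\dots,R_{x_{2n}}$ act on $\mathcal F^2$ and obey relation~\eqref{rel SupJordCos} of Proposition~\ref{proposition RelJordSvobSuper}.

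First I would record two consequences of~\eqref{rel SupJordCos} on $\mathcal F^2$. Taking the outer variable even gives $R_gR_uR_g=0$ for even $g$ (the superization of~\eqref{eq JordCos}), while taking the two outer variables even and the middle one odd gives the skew relation $R_gR_hR_{g'}=-R_{g'}R_hR_g$. Moreover each factor $R_{x_{2i-1}}\circ R_{x_{2i}}$ is symmetric in its pair, so together with the distance-two transpositions of operators supplied by~\eqref{rel SupJordCos} one can realize an arbitrary permutation (up to sign) of the $2n$ operators acting on $\mathcal F^2$. Using $R_gR_uR_g=0$, any assignment in which some even generator occupies two operator slots can be reordered so that the two equal even operators are separated by a single operator (this uses $2n\geqslant 3$, i.e. $n\geqslant 2$), whence that term vanishes. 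Thus in every surviving term each $g_i$ occurs at most once among $R_{x_1},\dots,R_{x_{2n}}$.

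Consequently at most $n-1$ of the $2n$ operator slots carry even generators, so at least $n+1$ slots carry the single odd generator $h$. The heart of the argument is to show this forces the value to vanish. Here the even identity $R_gR_uR_g=0$ no longer helps, since for odd $h$ relation~\eqref{rel SupJordCos} yields no cancellation (it only returns $R_hR_uR_h=R_hR_uR_h$). Instead I would exploit the skew relation $R_gR_hR_{g'}=-R_{g'}R_hR_g$: after reordering it makes the operator word alternating in its even entries, so that a nonzero value requires the even operators to be pairwise distinct and, carrying a word of length $2n$, to be sufficiently numerous — concretely $n$ distinct even generators interleaved with the odd ones, which is precisely the configuration realized on $\B^{(n)}$ in Lemma~\ref{lemma-An-Jord2super}. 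Since only $n-1$ even generators are available, the alternating structure collapses and $\Tilde f_n=0$.

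The main obstacle is this last step: converting the local skew relation $R_gR_hR_{g'}=-R_{g'}R_hR_g$ into a global alternating property of the degree-$2n$ operator word, and bookkeeping the signs introduced by~\eqref{rel SupJordCos} and by the $\circ\text{-symmetrization}$ across all $n$ pairs. The delicate point is that the lone odd generator may legitimately occur with high multiplicity — one has $R_h^2\neq 0$ and even $R_h^3\neq 0$ on $\mathcal F^2$, unlike the ordinary case where \eqref{eq JordCos} forces $R_x^3=0$ — so these repeated odd operators must be controlled. I expect this to require the superization of the length-four relation~\eqref{eq JordKvNilp}, which ties the head $ab$ to the operators, in order to reduce every $h\text{-heavy}$ word to the alternating normal form; once that reduction is in place, the pigeonhole bound $n-1<n$ on the distinct even generators completes the proof.
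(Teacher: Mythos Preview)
Your proposal has a genuine gap at the reordering step. The claim that the $\circ$-symmetry of each pair together with the distance-two swaps from \eqref{rel SupJordCos} realizes an arbitrary permutation of the $2n$ operators conflates two different kinds of move. Relation \eqref{rel SupJordCos} is an identity on $\mathcal F^2$: it rewrites a \emph{single monomial} by swapping positions $i$ and $i+2$ up to sign, so it only mixes positions of the same parity. The $\circ$-symmetry, by contrast, is not an identity on $\mathcal F^2$ but a structural feature of $f_n$: it says $\Tilde f_n$ is unchanged under swapping the \emph{variables} $x_{2i-1}\leftrightarrow x_{2i}$, but it does not let you swap positions $2i-1$ and $2i$ inside one fixed monomial of the expansion. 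Consequently you cannot, from these two ingredients, bring two copies of an even generator sitting at positions of opposite parity to distance two and invoke $R_gR_uR_g=0$. In fact the decisive configuration is precisely the one your argument tries to exclude: every even generator occurring \emph{twice} among the $x_j$'s (once in an odd-position slot and once in an even-position slot of each surviving monomial), and that configuration is not killed by $R_gR_uR_g=0$ at all. Your alternative endpoint---reducing to words with at least $n+1$ copies of the odd generator and then invoking an alternating or length-four relation---is therefore both unreached (the reduction fails) and, as you yourself flag, unproved.

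The paper's argument avoids both difficulties by induction on $n$. If some even generator $e_i$ occurs at most once among the $\tilde x_j$, one isolates the pair containing it and reduces to $\Tilde f_{n-1}$ evaluated in $A_{n-2,1}$. Otherwise each of $e_1,\dots,e_{n-1}$ occurs exactly twice and the odd generator $y$ occurs exactly twice; then for any nonzero monomial $w$ in the expansion the odd- and even-position sets $S(w)$ and $\bar S(w)$ both equal $\{e_1,\dots,e_{n-1},y\}$, so there are exactly two nonzero monomials $w$ and $w'$ (related by swapping every pair), and a sign count via \eqref{rel SupJordCos}---tracking the single transposition that moves the odd entry---gives $w'=-w$.
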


\begin{proof}
Let $A_{n-1,1}$ be the free $\V\text{-superalgebra}$ on $n-1$ even
and one odd generators. Consider a value of $\Tilde{f}_{n}$ on
some homogeneous elements
$\tilde{a},\tilde{b},\tilde{x}_1,\dots,\tilde{x}_{2n}\in
A_{n-1,1}$. In view of metability, we may assume that all the
elements of the set $S=\{\tilde{x}_1,\dots,\tilde{x}_{2n}\}$ are
generators of~$A_{n-1,1}$. Thus by definition of $f_n$, it is
clear that $\Tilde{f}_{1}=0$ in $A_{0,1}$.

Let us prove by induction on $n$ that $\Tilde{f}_{n}=0$ in
$A_{n-1,1}$. For $n\geqslant2$, by $e_1,\dots,e_{n-1}$ we denote
the even generators of $A_{n-1,1}$ and $y$ denotes its odd
generator. For a monomial
$w=(\tilde{a}\tilde{b})R_{\tilde{x}_1}R_{\tilde{x}_2}\dots
R_{\tilde{x}_{2n}}$ we set
$S(w)=\left\{\tilde{x}_1,\tilde{x}_3,\dots,\tilde{x}_{2n-1}\right\}$
and $\Bar{S}(w)=S\setminus S(w)$. Assume that $w\neq0$ in
$A_{n-1,1}$. Then it follows from~\eqref{rel SupJordCos} that
every~$e_i$ can be presented only once in each set $S(w)$ and
$\Bar{S}(w)$. Thus, $\Tilde{f}_{n}$ can be nonzero only if
every~$e_i$ is included in $S$ not more than twice. On the other
hand, if some~$e_i$ is included in $S$ only once, then one can
represent $\Tilde{f}_{n}$ in the form
\[\
\Tilde{f}_{n}=\pm \Tilde{f}_{n-1}
\bigl(\tilde{a},\tilde{b},\tilde{x}_1,\dots,\tilde{x}_{2j-2},\tilde{x}_{2j+1},\dots,\tilde{x}_{2n-2}\bigr)
\left(R_{\tilde{x}_{2j-1}}\circ R_{\tilde{x}_{2j}}\right),
\]
where $e_i\in\left\{\tilde{x}_{2j-1},\tilde{x}_{2j}\right\}$. In
this case, by inductive hypothesis, we have $\Tilde{f}_{n}=0$.
Therefore, it suffices to consider the case when every~$e_i$ is
included in $S$ twice exactly and $w\neq0$. This assumption yields
that the sets $S(w)$ and $\Bar{S}(w)$ consist of the same elements
$e_1,\dots,e_{n-1},y$. Consequently, except of~$w$, there is only
one more nonzero monomial $w'$ in the linear combination
$\Tilde{f}_{n}$ of the form
\[
w'=(\tilde{a}\tilde{b})R_{\tilde{x}_2}R_{\tilde{x}_1}R_{\tilde{x}_4}R_{\tilde{x}_3}\dots
R_{\tilde{x}_{2n}}R_{\tilde{x}_{2n-1}}.
\]
Hence we have
\[
\Tilde{f}_{n}=w+{(-1)}^{|\tilde{x}_1||\tilde{x}_2|+|\tilde{x}_3||\tilde{x}_4|+\dots+|\tilde{x}_{2n-1}||\tilde{x}_{2n}|}w'=w+w'.
\]
By virtue of~\eqref{rel SupJordCos}, taking into account the
equalities $S(w')=\Bar{S}(w)$ and $\Bar{S}(w')=S(w)$, it is not
hard to see that $w'$ is proportional to $w$. Thus it remains to
prove that the coefficient of this proportionality is equal to
$-1$. Let $\sigma$ be the permutation that transforms $\Bar{S}(w)$
into $S(w)$. It is clear that one can transform $w'$ into $w$
acting by $\sigma$ on $S(w')$ and by $\sigma^{-1}$ on
$\Bar{S}(w')$. While that, a scalar $\pm 1$ appearing after this
transformation will not depend on the parity of $\sigma$, but will
depend only on number of transpositions made by the odd elements.
Taking into account that $\sigma(y)\neq y$, it is not hard to
understand that such a transposition will be only one. Therefore,
$w'=-w$ and, consequently, $\Tilde{f}_{n}=0$.
\end{proof}

By Lemmas~\ref{lemma-An-Jord2super} and~\ref{lemma-LowBoundJord},
to prove the strictness of inclusions $\V_{n-1,1}\subset\V_{n,1}$
it suffices to verify that $\Tilde{f}_{n}$ takes a nonzero value
in $\B^{(n)}$. Indeed,
\begin{multline*}
\Tilde{f}_{n}\left(\mathbf{1},y,e_1,y,e_2,y,\dots,e_{n},y\right)
=\mathbf{y}\left(R_{e_1}\circ R_{y}\right)\left(R_{e_2}\circ R_{y}\right)\dots\left(R_{e_{n}}\circ R_{y}\right)=\\
=\left(\mathbf{y}\mathbf{e}_1\cdot y+\mathbf{1}\cdot e_1\right)\left(R_{e_2}\circ R_{y}\right)\dots\left(R_{e_{n}}\circ R_{y}\right)=\\
=\mathbf{y}\mathbf{e}_1\mathbf{y}\left(R_{e_2}\circ
R_{y}\right)\dots\left(R_{e_{n}}\circ R_{y}\right)=\dots=
\mathbf{y}\mathbf{e}_1\mathbf{y}\mathbf{e}_2\mathbf{y}\dots\mathbf{e}_n\mathbf{y}\neq0.
\end{multline*}

\smallskip

Theorem~\ref{theorem Jord2} is proved.

\begin{remark}
{\upshape Note that Theorem~\ref{theorem Jord2} gives a more
detailed description of the inclusions in the lattice $\lt{\V}$
than one needs to deduce the uniqueness of the basic superrank
$(0,2)$ for $\V$. Actually proving that
$\bsp{\V}=\bigl\{(0,2)\bigr\}$ we could restrict with establishing
the equality $\V_{0,2}=\Tilde{\V}$ and the strict inclusions
$\V_{r,1}\subset\Tilde{\V}$ for $r=0,1,2,\dots$ Namely, in view of
Lemmas~\ref{lemma-A-Jord2super} and~\ref{lemma-LowBoundJord}, it
is enough to check that the superpolynomial $\Tilde{f_n}$ doesn't
vanish on some elements of the superalgebra $\A$.}
\end{remark}

\section{Malcev algebras}%
\label{Sec:MalcevAlgebras}

Throughout this section, we set $\mathcal V=\mathrm{Malc}^{(2)}$.

\subsection{Preliminary identities}%
\label{SubSec:PreliminaryIdentities}

It's well-known~\cite{Sagle61} that an anticommutative algebra
over a field of characteristic distinct from~$2$ is a Malcev one
if and only if it satisfies the Sagle identity
$$
\sum_{\sigma\in\mathrm{C}_4}\left(x_{\sigma(1)}x_{\sigma(2)}\right)R_{x_{\sigma(3)}}R_{x_{\sigma(4)}}
=\left(x_1x_3\right)\left(x_2x_4\right).
$$
By virtue of metability, the Sagle identity gets the form
\begin{equation}\label{eq Sagle}
\sum_{\sigma\in\mathrm{C}_4}\left(x_{\sigma(1)}x_{\sigma(2)}\right)R_{x_{\sigma(3)}}R_{x_{\sigma(4)}}=0.
\end{equation}
For $x_4=w\in\bigl(\svob{\V}{X}\bigr)^2$, identity~\eqref{eq
Sagle} implies
\begin{equation}\label{eq wSagle}
wR_{x_1}R_{x_2}R_{x_3}=wR_{x_3}R_{x_1}R_{x_2}.
\end{equation}
Moreover, combining~\eqref{eq Sagle} with anticommutativity and
taking into account that\linebreak $\chr F\neq 2$, we obtain
\begin{equation}\label{eq SagleSqr}
(xy)\left[R_x,R_y\right]=0.
\end{equation}
Finally, applying~\eqref{eq wSagle} and~\eqref{eq SagleSqr}, we
have
\begin{equation}\label{eq SagleGen}
(xy)\rho R_x\eta R_y=(xy)\rho R_y\eta R_x,
\end{equation}
for any operator words $\rho,\eta$.

\subsection{Auxiliary  $\V\text{-superalgebra}$}%
\label{SubSec:AuxiliaryMalcsuperalgebra}

Let $U=U_0+U_1$ be the superalgebra
\[
U_0=F\cdot e,\quad\; U_1=F\cdot y
\]
with null multiplication and $M=M_0+M_1$ be the vector space
\[
M_0=F\cdot a,\quad\; M_1=F\cdot v+ F\cdot w.
\]
Consider a split null extension $\A=U\dotplus M$ with a
superanticommutative multiplication such that all nonzero products
of the basis elements of $\A$, up to the order of factors, are the
following:
\[
a\cdot y=v,\quad v\cdot y=a,\quad w\cdot e=w.
\]
The superanticommutativity rule means that
even elements anticommute  with all elements of superalgebra
and odd elements commute to each other.

\begin{lemma}\label{lemma-A-Malc2super}
$\A$ is a $\V\text{-superalgebra}$ on one even and one odd
generators.
\end{lemma}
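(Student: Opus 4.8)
The goal is to prove Lemma~\ref{lemma-A-Malc2super}: that $\A$ is a $\V$-superalgebra (i.e.\ $\V = \mathrm{Malc}^{(2)}$-superalgebra) generated by one even and one odd element. The plan is to break this into three verifications, paralleling the structure of the analogous proofs for the Jordan case (Lemmas~\ref{lemma-A-Jord2super} and~\ref{lemma-An-Jord2super}): first that $\A$ is metabelian, second that $\A$ is superanticommutative (the superization of $x\circ y=0$), and third that $\A$ satisfies the superization of the defining Malcev identity. The generation claim should be handled separately by exhibiting explicit generators.

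First I would record that $\A$ is metabelian by construction: since $\A = U\dotplus M$ is a split null extension, we have $M\cdot M = 0$ and $U\cdot U = 0$, so every product of two elements lands in $M$, and any further product of two such elements vanishes. Thus the superization of~\eqref{eq Metab} holds trivially. Likewise, superanticommutativity is built into the definition of the multiplication on $\A$ (even elements anticommute with everything, odd elements commute with each other), which is precisely the superization of $x\circ y = 0$. The substantive point is therefore the second defining identity of $\mathrm{Malc}$, namely $\mathrm J(x,y,z)x=\mathrm J(x,y,xz)$.

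Here I would exploit metability exactly as in Section~\ref{SubSec:PreliminaryIdentities}. Over an anticommutative metabelian algebra the Malcev identity reduces to the metabelian Sagle identity~\eqref{eq Sagle}, and superizing this yields the corresponding superidentity together with its consequences~\eqref{eq wSagle}, \eqref{eq SagleSqr}, \eqref{eq SagleGen}. Because $\A^2 = M$ and $M\cdot M = 0$, every relevant superidentity need only be checked on operators $R_{u}$ with $u\in\{e,y\}$ acting on the two-dimensional odd-plus-even module $M$. So the hard computational core is to verify that the superized Sagle relation holds for the multiplication operators $R_e, R_y$ restricted to $M = F\cdot a + F\cdot v + F\cdot w$. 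This is a finite-dimensional check using the multiplication table $a\cdot y=v$, $v\cdot y=a$, $w\cdot e=w$, keeping careful track of the Koszul signs $(-1)^{ij}$ when odd variables pass one another.

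The main obstacle I expect is precisely this sign-bookkeeping: because $y$ is odd while $e$ is even, the superized relations carry parity-dependent signs, and one must confirm that the action of $R_y$ (which swaps $a\leftrightarrow v$) together with $R_e$ (which fixes $w$ and kills $a,v$) is consistent with the superized Sagle identity rather than the ordinary one. Concretely I would tabulate $R_y^2$ on $M$ (sending $a\mapsto v\mapsto a$, so $R_y^2$ acts as the identity on the span of $a,v$), note that $R_e$ annihilates $a$ and $v$ and acts as identity on $w$, and then check relation~\eqref{eq SagleGen}-type interchangeability of $R_e$ and $R_y$ on each basis vector with the correct sign. Once this finite check succeeds, $\A$ is a $\V$-superalgebra. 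Finally, for generation, I would show $\A$ is generated by one even and one odd element by taking, say, the odd generator $a+y$ and the even generator $w$ (or a suitable combination), and recovering $a, v, y, w, e$ from products; here one uses $(a+y)^2$ and successive multiplications, mirroring the generation argument in Lemma~\ref{lemma A V'-SuperOneOdd}, to express each basis element in terms of the two chosen generators.
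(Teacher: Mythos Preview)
Your overall plan for the Malcev check is sound and matches the paper's: metability and superanticommutativity are immediate from the split-null-extension construction, and the only nontrivial point is the superized Sagle identity, which reduces to checking operator relations on $M$ with multipliers from $U=\{e,y\}$. The paper streamlines this step more than you do: since $R_e$ kills $a,v$ and $R_y$ kills $w$, any mixed triple $R_{z_1}R_{z_2}R_{z_3}$ with both $e$ and $y$ occurring annihilates $M$, so the only nonzero cases are $z_1=z_2=z_3=y$ and $z_1=z_2=z_3=e$, for which the superized cyclic relation~\eqref{eq wSagle} is trivial. Your proposed check of ``\eqref{eq SagleGen}-type interchangeability'' would get there, but with unnecessary case-work.

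The generation argument, however, is genuinely wrong. You have the parities confused: in this superalgebra $a\in M_0$ is \emph{even} and $w\in M_1$ is \emph{odd}, so ``the odd generator $a+y$'' is not homogeneous and ``the even generator $w$'' is in fact odd. Worse, even ignoring parity, the pair $\{a+y,\,w\}$ does not generate $\A$ at all: one computes $(a+y)^2=a\cdot y+y\cdot a=v-v=0$, $w^2=0$, $(a+y)\cdot w=w\cdot(a+y)=0$, so every product of these two elements vanishes. The paper instead takes the even element $a+e$ and the odd element $w+y$; from $(a+e)(w+y)=v-w$ one then recovers $a$, $w$, and hence all basis vectors by further multiplications. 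You should replace your generators with this pair (or verify an analogous homogeneous choice) before the argument goes through.
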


\begin{proof}
By definition, $\A$ is metabelian and superanticommutative.
Moreover, it is not hard to see that $\A$ can be generated by the
even element~$a+e$ and by the odd element $w+y$. It remains to
check that $\A$ is a Malcev superalgebra. By construction of $\A$,
it suffices to verify that the superization of~\eqref{eq wSagle}
holds in $\A$, i.~e. that the action of an operator
$R_{z_1}R_{z_2}R_{z_3}$ on $M$, for homogeneous $z_i\in U$,
satisfies the relation
\[
R_{z_1}R_{z_2}R_{z_3}={(-1)}^{|z_1||z_2|+|z_1||z_3|}R_{z_2}R_{z_3}R_{z_1}.
\]
Actually it is not hard to see that this relation holds trivially
in both possible nonzero cases $z_1=z_2=z_3=y$ and
$z_1=z_2=z_3=e$.
\end{proof}

\subsection{Additive basis of $\pnv$}%
\label{SubSec:BasicWordsOfMalc} By virtue of anticommutativity, we
consider the space $\pnv$ only for $n\geqslant3$. Following the
fixed above notations, we write down the monomials of $\pnv$
omitting some uniquely restored indices that are assumed to be
arranged in the ascending order.

\begin{definitionsec}
{\upshape The \textit{basis words of $\pnv$} are the polynomials
of the following types:
\begin{align*}
1)&\enskip
\left(x_1 x_2\right)R^{n-2},\quad \left(x_2 x_3\right)R^{n-2},\quad \left(x_3 x_1\right)R^{n-2},\\
2)&\enskip
\left(x_1 x_i\right)R^{n-2},\quad i=4,\dots,n,\\
3)&\enskip \left(x_1 x_2\right)\left(R_{x_3}\circ
R_{x_4}\right)R^{n-4}, \quad \left(x_2
x_3\right)\left(R_{x_1}\circ R_{x_4}\right)R^{n-4},
\quad \left(x_3 x_1\right)\left(R_{x_2}\circ R_{x_4}\right)R^{n-4},\\
4)&\enskip \left(x_1 x_i\right)\left(R_{x_2}\circ
R_{x_3}\right)R^{n-4},\quad i=4,\dots,n,
\end{align*}
where $R_x\circ R_y=R_xR_y+R_yR_x$.}
\end{definitionsec}

\begin{lemma}\label{lemma-LinearSpanMalc}
$\pnv$ is spanned by its basis words.
\end{lemma}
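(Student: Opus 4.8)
The goal is to show that the space $\pnv$ for $\V=\mathrm{Malc}^{(2)}$ is spanned by the four families of basis words listed in the definition. The plan is to start from the already-established linear span description in Lemma~\ref{lemma-LinSpanAltPrelim}-style form, but here the relevant structural facts are anticommutativity together with the metabelian relations~\eqref{eq Sagle}--\eqref{eq SagleGen}. An arbitrary multilinear monomial of $\pnv$ has, by metability, the shape $(x_ix_j)R_{x_{k_1}}\cdots R_{x_{k_{n-2}}}$, since any product of degree $\geqslant 2$ lands in $\bigl(\svob{\V}{X}\bigr)^2$ and all further multiplications may be taken on the right (left multiplications reduce to right ones by anticommutativity). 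So the whole argument reduces to normalizing the operator word $R_{x_{k_1}}\cdots R_{x_{k_{n-2}}}$ acting on the bracket $(x_ix_j)$.

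First I would invoke the commutation relation~\eqref{eq wSagle}, which says that on $\bigl(\svob{\V}{X}\bigr)^2$ any three consecutive right multiplications may be cyclically permuted: $wR_aR_bR_c=wR_cR_aR_b$. This is the key tool for sorting indices. Using it repeatedly, I expect to be able to bring the operator word into a ``nearly ordered'' form in which the indices $R_{x_{k}}$ appear in ascending order, at the cost of possibly one leading factor that cannot be absorbed into the cyclic reordering. The relation~\eqref{eq SagleSqr}, namely $(xy)[R_x,R_y]=0$, handles the interaction of the operators with the variables $x,y$ already sitting in the bracket, and~\eqref{eq SagleGen} lets me freely interchange the two ``special'' operators $R_x$ and $R_y$ past an arbitrary operator word. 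The combination $R_a\circ R_b=R_aR_b+R_bR_a$ appearing in types 3) and 4) is precisely the symmetric object that survives after using these relations to symmetrize pairs of multiplications.

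The bookkeeping that produces exactly four families is where the structure of the indices $\{1,2,3\}$ versus $\{4,\dots,n\}$ matters. The plan is to distinguish cases according to which of the variables $x_1,x_2,x_3$ occur inside the leading bracket $(x_ix_j)$ and which occur as multiplication operators: the cyclic symmetry among $x_1,x_2,x_3$ (from anticommutativity plus the Sagle relation) forces the three ``rotated'' brackets $(x_1x_2),(x_2x_3),(x_3x_1)$ to be the natural representatives, which is why types 1) and 3) list exactly these three, while type 2) and 4) list $(x_1x_i)$ for $i\geqslant 4$ with the remaining low indices placed into the symmetric operator factor $R_{x_2}\circ R_{x_3}$ (resp.\ the appropriate pairing in type 3)). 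I would show that after applying~\eqref{eq wSagle} to cyclically sort, and~\eqref{eq SagleGen}/\eqref{eq SagleSqr} to symmetrize, every monomial collapses onto one of these four shapes, with the parity distinction between even and odd degree $n$ dictating whether a trailing single $R$ or a symmetric pair $R\circ R$ terminates the word.

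The main obstacle I anticipate is the careful case analysis needed to verify that no further linear relations collapse distinct basis words together, and conversely that the reduction genuinely reaches one of the four listed forms in every case rather than leaving some stray ``half-symmetrized'' monomial. In particular, tracking the signs coming from anticommutativity through the cyclic permutations of~\eqref{eq wSagle}, and confirming that the pairs which must be symmetrized really do combine into $R\circ R$ rather than into an antisymmetric combination (which~\eqref{eq SagleSqr} would kill), is the delicate bookkeeping step. I would organize this by induction on the number of operators $R$ that are still ``out of place,'' peeling off one multiplication at a time and appealing to~\eqref{eq wSagle} and~\eqref{eq SagleGen} to either order it or pair it symmetrically with a neighbor, until only the four canonical shapes remain.
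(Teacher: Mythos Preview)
Your plan has a genuine gap. The identities \eqref{eq SagleSqr} and \eqref{eq SagleGen} that you want to lean on are vacuous on $\pnv$: in a multilinear monomial $(x_ix_j)R_{x_{k_1}}\cdots R_{x_{k_{n-2}}}$ the variables $x_i,x_j$ sitting in the bracket never reappear among the operators, so there is no pair $R_{x_i},R_{x_j}$ for those relations to act on. What actually does the work, and what your plan never invokes explicitly, is the Sagle relation \eqref{eq Sagle} itself --- a four-term multilinear identity which, crucially, \emph{changes which pair of variables sits in the bracket}. That is how the paper handles a monomial such as $(x_2x_4)R_{x_1}R_{x_3}$: apply \eqref{eq Sagle} to rewrite it as minus the sum of $(x_4x_1)R_{x_3}R_{x_2}$, $(x_1x_3)R_{x_2}R_{x_4}$, $(x_3x_2)R_{x_4}R_{x_1}$, and then use anticommutativity and the definition of $R\circ R$ to recognize each piece as (or reduce it to) a basis word. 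For $n\geqslant 5$ one first uses \eqref{eq wSagle} to partially normalize the operator string and then applies this \eqref{eq Sagle} reduction to the leading block. Without \eqref{eq Sagle} you cannot move a bracket like $(x_2x_4)$ onto the admissible list $(x_1x_2),(x_2x_3),(x_3x_1),(x_1x_i)$, and your reduction stalls.

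Two further points. Your remark that ``the parity distinction between even and odd degree $n$'' determines whether a single $R$ or an $R\circ R$ terminates the word is incorrect: for every $n\geqslant 4$ the types 1)--2) and the types 3)--4) are present simultaneously. The symmetric factor $R\circ R$ is not a parity-of-$n$ correction but rather accounts for operator orderings that \eqref{eq wSagle} alone cannot reach (cyclic shifts of three consecutive $R$'s realize only even permutations when $n\geqslant 5$), and again it is \eqref{eq Sagle}, not \eqref{eq SagleGen}, that bridges the two. Finally, your worry about basis words ``collapsing together'' is a linear-independence issue; that is the content of the next lemma, established via the auxiliary superalgebra $\A$, and is not part of what you are asked to prove here.
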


\begin{proof}
In view of anticommutativity, it is clear that $\mathcal
P_3\left(\V\right)$ is spanned by its basis words of type~$1)$.

Let $I_n$ be the linear span of basis words of $\pnv$ for
$n\geqslant4$. Then using~\eqref{eq Sagle} and anticommutativity,
we have
\begin{multline*}
\left(x_2 x_4\right)R_{x_1}R_{x_3}= -\left(x_4
x_1\right)R_{x_3}R_{x_2} -\left(x_1 x_3\right)R_{x_2}R_{x_4}
-\left(x_3 x_2\right)R_{x_4}R_{x_1}=\\
\left(x_1 x_4\right)\left(R_{x_2}\circ R_{x_3}\right) -\left(x_1
x_4\right)R_{x_2}R_{x_3}
-\left(x_1 x_3\right)R_{x_2}R_{x_4}+\\
+\left(x_2 x_3\right)\left(R_{x_1}\circ R_{x_4}\right) -\left(x_2
x_3\right)R_{x_1}R_{x_4} \equiv0\pmod{I_4}.
\end{multline*}
Similarly, it is not hard to check that
\[
\left(x_2 x_4\right)R_{x_3}R_{x_1},\; \left(x_3
x_4\right)R_{x_1}R_{x_2},\; \left(x_3 x_4\right)R_{x_2}R_{x_1}\in
I_4.
\]
Thus, $I_4=\mathcal P_4\left(\V\right)$.

Further, let $u$ be an arbitrary monomial of $\pnv$ for
$n\geqslant5$. Then applying~\eqref{eq wSagle} one can order the
indices of variables of $u$ as follows:
\[
u=\left(x_{i_1}x_{i_2}\right)R_{x_{i_3}}\dots R_{x_{i_n}}, \quad
i_3<i_5,\quad i_4<\dots <i_n.
\]
Thus, similarly to the case $n=4$, combining~\eqref{eq Sagle} with
anticommutativity and~\eqref{eq wSagle}, one can obtain $u\in
I_n$.
\end{proof}

\begin{lemma}\label{lemma-UpperBoundMalc}
Any nontrivial linear combination of  basis words of $\pnv$ is not
an identity of ${\mathrm G}\left(\mathcal A\right)$.
\end{lemma}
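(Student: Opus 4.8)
The plan is to show that the superalgebra $\A$ from Section~\ref{SubSec:AuxiliaryMalcsuperalgebra} detects every basis word, i.e. that no nontrivial linear combination $f_n$ of the basis words of $\pnv$ superizes to an identity of $\A$. Following the pattern of Lemma~\ref{lemma BasisWordsNilAlt} and Lemma~\ref{lemma-UpperBoundJord}, the strategy is to assume $\A$ satisfies $\Tilde f_n=0$ and then, by making carefully chosen substitutions of the variables $x_1,\dots,x_n$ by the basis elements $a,v,w,e,y$ of $\A$, force each coefficient in $f_n$ to vanish one family at a time. The key structural fact to exploit is that $\A$ has a two-dimensional ``cyclic'' odd-even piece $F\cdot a + F\cdot v$ on which $R_y$ acts as a nonzero involution ($a\cdot y=v$, $v\cdot y=a$), together with a separate piece $F\cdot w$ on which $R_e$ acts identically ($w\cdot e=w$); all other products vanish. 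This gives two independent ``channels'' for evaluating operator words, and the superization signs from the Koszul rule will distinguish the symmetric combinations $R_{x_i}\circ R_{x_j}$ appearing in types~3) and~4).

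First I would separate the words by their leading pair. The words of type~2) and~4) carry a factor $x_1x_i$ with $i\geqslant4$, while types~1) and~3) involve only the pairs built from $x_1,x_2,x_3$; since substitutions can be chosen to kill all products except one targeted monomial, I expect to isolate the coefficients of types~2) and~4) first by sending $x_i\mapsto$ a generator that only survives in those words. Concretely, the natural substitutions are $x_1\mapsto a$ or $v$ (to activate the cyclic channel through $R_y$), the repeated variables $\mapsto y$, and one designated variable $\mapsto w$ paired with $e$ to activate the second channel; comparing the even-valued and odd-valued outputs (values landing in $F\cdot a$ versus $F\cdot v$, or in $F\cdot w$) yields two linear equations per coefficient, exactly as the $a_0$/$a_1$ dichotomy did in Lemma~\ref{lemma BasisWordsNilAlt}. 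The role of the relation~\eqref{rel SupJordCos}'s Malcev analogue (the superized~\eqref{eq wSagle}, verified for $\A$ in Lemma~\ref{lemma-A-Malc2super}) is to guarantee that these substituted operator words do not collapse in unexpected ways, so the surviving value is genuinely proportional to the single coefficient being tested.

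The main obstacle, as in the Jordan case, will be bookkeeping the superization signs correctly for the symmetrized operators $R_{x_i}\circ R_{x_j}$ in types~3) and~4) together with the superanticommutativity of $\A$ (even elements anticommute, odd elements commute). Because $a,w$ are even and $v,y,e$ have mixed parity, each substitution produces a sign $(-1)^{\sum|x_i||x_j|}$ from moving variables past one another, and I must check that for the targeted monomial this sign is definite (so the coefficient really is forced to zero) rather than producing a cancellation that would leave the coefficient undetermined. I anticipate that the two substitutions distinguishing $a$ from $v$ (respectively the $w,e$ channel) yield a $2\times2$ system whose determinant is a nonzero multiple of a power of the cube root of unity inherited from $\A$, exactly as $(1+\varepsilon)$ and $-\varepsilon$ gave an invertible system in Lemma~\ref{lemma BasisWordsNilAlt}; once that nonvanishing is confirmed, every coefficient vanishes and the lemma follows. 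Handling the three ``cyclic'' type~1) and type~3) words built from $x_1,x_2,x_3$ simultaneously — where anticommutativity relates $x_1x_2$, $x_2x_3$, $x_3x_1$ — will require the most care, and I would treat them last, after the type~2) and~4) coefficients are already known to be zero.
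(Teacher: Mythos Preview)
Your overall strategy coincides with the paper's: assume $\Tilde f_n=0$ in $\A$, and use substitutions drawn from the two ``channels'' $(a,v;y)$ and $(w;e)$ to annihilate the coefficients in stages, leaving the cyclic triples from types~1) and~3) for last. Two concrete errors and one missing mechanism would derail the execution, though.

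First, the Malcev superalgebra $\A$ of Section~\ref{SubSec:AuxiliaryMalcsuperalgebra} contains \emph{no} cube root of unity; you have conflated it with the alternative superalgebra of Section~\ref{SubSec:AuxiliaryVv-supOneOdd}. The parities are also misstated: here $e,a\in\A_0$ are even while $y,v,w\in\A_1$ are odd. So there is no $(1+\varepsilon)$ versus $-\varepsilon$ dichotomy, and no $2\times 2$ system per coefficient is needed.

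Second, the key mechanism separating types~1),~2) from types~3),~4) --- which you gesture at but do not identify --- is this. The paper substitutes either $x_i=a$ with all other $x_j=y$, or $x_i=w$ with all other $x_j=e$. In the first case the symmetrizers $R_{x_p}\circ R_{x_q}$ in types~3),~4) involve two odd $y$'s, and superization turns $R_yR_y+R_yR_y$ into $R_yR_y-R_yR_y=0$; so types~3),~4) vanish outright and a \emph{single} substitution already isolates one $\alpha$-coefficient. In the second case the operators become $R_e\circ R_e=2R_e^2$ (two even arguments, no sign), so the $(w,e)$ channel detects the $\beta$'s once the $\alpha$'s are gone. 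Only the cyclic triples $(\alpha_1,\alpha_2,\alpha_3)$ and $(\beta_1,\beta_2,\beta_3)$ require a genuine $3\times 3$ linear system, produced by cycling $a$ (respectively $w$) through the positions $x_1,x_2,x_3$; this is exactly the $n=3$ computation at the start of the paper's proof, not a $2\times2$ system coming from $a$ versus $v$.
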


\begin{proof}
Consider first the case $n=3$. We set
\[
f=\alpha\left(x_1 x_2\right)x_3+\beta\left(x_2
x_3\right)x_1+\gamma\left(x_3 x_1\right)x_2,\quad
\alpha,\beta,\gamma\in F
\]
and assume that $\Tilde{f}=0$ in $\A$. Then by the substitution
$x_1=a$, $x_2=x_3=y$, we have
\[
\Tilde{f}=\alpha (a\cdot y)\cdot y -\gamma (y\cdot a)\cdot y=
(\alpha+\gamma)a=0.
\]
Similarly, after two more substitutions $x_2=a$, $x_1=x_3=y$ and
$x_3=a$, $x_1=x_2=y$, we get the system
\[
\left\{
\begin{aligned}
\alpha+\gamma&=0,\\
\alpha+\beta&=0,\\
\beta+\gamma&=0.
\end{aligned}\right.
\]
The unique solution of this system in $F$ is
$\alpha=\beta=\gamma=0$.

Now consider a linear combination
\[
f_n=g_n+h_n+p_n+q_n
\]
of basis words of $\pnv$ for $n\geqslant 4$, where
\begin{align*}
g_n&=\alpha_1\left(x_1 x_2\right)R^{n-2}+\alpha_2\left(x_2 x_3\right)R^{n-2}+\alpha_3\left(x_3 x_1\right)R^{n-2},\\
h_n&=\sum_{i=4}^n \alpha_i\left(x_1 x_i\right)R^{n-2},\\
p_n&=\beta_1\left(x_1 x_2\right)\left(R_{x_3}\circ
R_{x_4}\right)R^{n-4} +\beta_2\left(x_2
x_3\right)\left(R_{x_1}\circ R_{x_4}\right)R^{n-4}
+\beta_3\left(x_3 x_1\right)\left(R_{x_2}\circ R_{x_4}\right)R^{n-4},\\
q_n&=\sum_{i=4}^n \beta_i\left(x_1 x_i\right)\left(R_{x_2}\circ
R_{x_3}\right)R^{n-4}, \quad \alpha_i,\beta_i\in F.
\end{align*}
Suppose that $\A$ satisfies the superidentity $\Tilde{f}_n=0$.
Then let us show that all the coefficients in $f_n$ are zero.

First we fix $i\geqslant 4$ and make in $\Tilde{f}_n=0$ the
substitution $x_i=a,\ x_j=y$ for all $j\neq i$. Then we get
\[
\alpha_i(y\cdot a)R^{n-2}_y= \left\{
\begin{aligned}
-\alpha_i v,\quad&\text{if $n$ is even},\\
-\alpha_i a,\quad&\text{if $n$ is odd},
\end{aligned}\right.
\]
whence, $\alpha_i=0$ for $i\geqslant 4$ and
\[
f_n=g_n+p_n+q_n.
\]
Similarly, for $i\geqslant 4$, by the substitution $x_i=w,\ x_j=e$
for all $j\neq i$, one can prove that $\beta_i=0$. Thus,
\[
f_n=g_n+p_n.
\]
Further, for $i=1,2,3$, by the substitution $x_i=a,\ x_j=y$ for
all $j\neq i$, similarly to the case $n=3$, we obtain
$\alpha_i=0$. Consequently,
\[
f_n=p_n.
\]
Finally, for $i=1,2,3$, by the substitution $x_i=w,\ x_j=e$ for
all $j\neq i$, we get $\beta_i=0$.
\end{proof}

\smallskip

By Lemmas~\ref{lemma-A-Malc2super}--\ref{lemma-UpperBoundMalc}, we
have $\V_{1,1}=\Tilde{\V}$. Thus, to complete the proof of
Theorem~\ref{theorem Malc2}, it remains to show that the chains
\[
\V_1\subset\V_2\subset\dots\subset\V_r\subset\dots\subset\V,\quad\;
\V_{0,1}\subset\V_{0,2}\subset\dots\subset\V_{0,s}\subset\dots\subset\Tilde{\V}
\]
ascend strictly.

\subsection{Strictness of inclusions $\V_{n}\subset\V_{n+1}$}%
\label{SubSec:InclusionsMalcLine}

Let $\mathrm G_n$ be the Grassmann algebra with the unit
$\mathbf{1}$ on the set $\mathbf{e}_1,\dots,\mathbf{e}_n$ of
anticommuting generators and $U_n$ be the algebra on the set
$e_1,\dots,e_n$ of generators with null multiplication. Consider a
split null extension $A_n=U_{n-1}\dotplus\mathrm G_{n-1}$ with an
anticommutative multiplication induced by the actions
\[
(\mathbf{e}_i\dots\mathbf{e}_j)\cdot
e_k=\mathbf{e}_i\dots\mathbf{e}_j\mathbf{e}_k.
\]

\begin{lemma}\label{lemma-An-Malc2n1}
$A_n$ is a $\V_n\text{-algebra}$.
\end{lemma}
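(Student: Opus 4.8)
The plan is to exhibit $A_n$ as an $n$-generated algebra lying in $\V=\mathrm{Malc}^{(2)}$; the membership $A_n\in\V_n$ is then automatic, since any $\V$-algebra generated by at most $n$ elements is a homomorphic image of the free algebra $\svob{\V}{X_n}$ and therefore belongs to the variety $\V_n$ that this free algebra generates. Thus I would split the proof into two independent tasks: checking that $A_n$ is a metabelian Malcev algebra, and exhibiting a generating set of cardinality $n$.

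The structural pieces are short. By construction $A_n=U_{n-1}\dotplus\mathrm{G}_{n-1}$ is a split null extension, so $A_n^2\subseteq\mathrm{G}_{n-1}$ and $(A_n^2)^2=0$, which is exactly the metability identity~\eqref{eq Metab}; anticommutativity ($x\circ y=0$ in~\eqref{eq Malc}) is imposed in the definition of the multiplication. For the generating set I would take $\{\mathbf 1,e_1,\dots,e_{n-1}\}$, which has $n$ elements: the action $\mathbf 1\cdot e_k=\mathbf e_k$ produces the degree-one generators of $\mathrm{G}_{n-1}$, and iterating $(\mathbf e_{i_1}\cdots\mathbf e_{i_j})\cdot e_k=\mathbf e_{i_1}\cdots\mathbf e_{i_j}\mathbf e_k$ recovers every Grassmann monomial, while the $e_k$ themselves are among the generators. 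A Grassmann element must be included, since products of the $e_k$ alone vanish; this is why the relevant index is $n$ rather than $n-1$.

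The remaining and main point is that $A_n$ satisfies the Malcev identity. Since $A_n$ is anticommutative and metabelian, by the Sagle criterion recalled above it is Malcev if and only if it satisfies the reduced Sagle identity~\eqref{eq Sagle}. I would verify this by multilinearity on basis elements, observing that a summand $(x_ax_b)R_{x_c}R_{x_d}$ can be nonzero only when three of the four arguments lie in $U_{n-1}$ (so as to act by right Grassmann multiplication) and the fourth lies in $\mathrm{G}_{n-1}$. Fixing the slot of the Grassmann argument $g$, the two cyclic terms in which $g$ occurs as a right-multiplication operator vanish, because such an operator annihilates $A_n^2\subseteq\mathrm{G}_{n-1}$; the two surviving terms evaluate (up to the anticommutativity sign) to $g\,\mathbf c_2\mathbf c_3\mathbf c_4$ and $-g\,\mathbf c_4\mathbf c_2\mathbf c_3$, where the $\mathbf c_i$ are degree-one, hence pairwise anticommuting, elements of $\mathrm{G}_{n-1}$.

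The bookkeeping obstacle is precisely this sign: the cyclic shift $(2\,3\,4)$ is even, so $\mathbf c_4\mathbf c_2\mathbf c_3=\mathbf c_2\mathbf c_3\mathbf c_4$ and the two terms cancel; the same cancellation occurs for each of the four placements of the Grassmann argument by the cyclic symmetry of the $\mathrm C_4$-sum. With~\eqref{eq Sagle} confirmed, $A_n$ is a metabelian Malcev algebra, and combining this with the $n$-element generating set gives $A_n\in\V_n$, completing the proof.
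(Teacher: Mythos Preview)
Your proposal is correct and follows essentially the same route as the paper: the paper likewise notes that $A_n$ is metabelian and anticommutative by construction, observes that $\mathbf{1},e_1,\dots,e_{n-1}$ generate it, and then verifies the reduced Sagle identity~\eqref{eq Sagle} by writing the two surviving cyclic terms as $w\mathbf{e}_i\mathbf{e}_j\mathbf{e}_k-w\mathbf{e}_k\mathbf{e}_i\mathbf{e}_j=0$. Your case analysis (three arguments in $U_{n-1}$, one in $\mathrm{G}_{n-1}$, with the $R_g$-terms killed because $A_n^2\subseteq\mathrm{G}_{n-1}$) is just a more explicit version of the same computation.
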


\begin{proof}
By definition, $A_n$ is metabelian and anticommutative. Moreover,
it is not hard to see that $A_n$ can be generated by the elements
$\mathbf{1},e_1,\dots,e_{n-1}$. Thus it remains to check that
$A_n$ satisfies~\eqref{eq Sagle}. Indeed, for arbitrary
$w\in\mathrm G_n$, we have
\[
(w\cdot e_i)R_{e_j}R_{e_k}+(e_k\cdot w)R_{e_i}R_{e_j}=
w\mathbf{e}_i\mathbf{e}_j\mathbf{e}_k-w\mathbf{e}_k\mathbf{e}_i\mathbf{e}_j=0.
\]
\end{proof}

\medskip

Further, let us denote by $f_n=f_n\left(x_1,\dots,x_{n}\right)$
the polynomial
\[
f_n=\sum_{\sigma\in {\mathrm S}_n}{(-1)}^{|\sigma|}
\left(x_{\sigma(1)}x_{\sigma(2)}\right)R_{x_{\sigma(3)}}\dots
R_{x_{\sigma(n)}}.
\]

\begin{lemma}\label{lemma identVn}
The free algebra $\svob{\V}{X_n}$ satisfies the identity
$f_{n+1}=0$.
\end{lemma}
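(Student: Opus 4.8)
The plan is to exploit the two structural features of $f_{n+1}$: it is multilinear and \emph{alternating} in its $n+1$ arguments (the factor $(-1)^{|\sigma|}$ makes any transposition of two arguments reverse the sign), so it vanishes as soon as two arguments coincide. By multilinearity it suffices to evaluate $f_{n+1}$ on basis monomials of $\svob{\V}{X_n}$, each of which is either one of the $n$ generators $x_1,\dots,x_n$ or an element of the square $\bigl(\svob{\V}{X_n}\bigr)^2$.

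First I would use metability \eqref{eq Metab} to cut down which substitutions can contribute. In any monomial $(u_{\sigma(1)}u_{\sigma(2)})R_{u_{\sigma(3)}}\cdots R_{u_{\sigma(n+1)}}$ the leading factor already lies in the square, so right multiplying it by a further square gives $0$; likewise a product of two squares is $0$. Hence a nonzero term forces every operator argument $u_{\sigma(3)},\dots,u_{\sigma(n+1)}$ to be a generator and allows at most one of the two leading arguments to lie in $\bigl(\svob{\V}{X_n}\bigr)^2$. Consequently, if three or more of the substituted elements are squares the expression vanishes termwise, and if exactly two are squares they would both have to occupy the leading positions, again giving $0$.

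Two cases remain. If all $n+1$ arguments are generators, then since there are only $n$ of them two must agree, and the alternating property gives $f_{n+1}=0$ at once; this is the pigeonhole mechanism responsible for the infinite basic rank. The essential case is when exactly one argument is a square $w$ and the remaining $n$ are generators; by the alternating property I may assume these $n$ generators are pairwise distinct, i.e.\ they exhaust $x_1,\dots,x_n$. Pairing the permutations that place $w$ in the first leading slot with those that place it in the second, anticommutativity turns the paired terms into equal contributions, so the surviving sum is proportional to $\sum_{\pi\in\mathrm S_n}(-1)^{|\pi|}\,w\,R_{x_{\pi(1)}}\cdots R_{x_{\pi(n)}}$.

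The crux, and the step I expect to be the main obstacle, is to show this alternating sum of operator words vanishes on the square. Here I would invoke the Sagle consequences established above: relation \eqref{eq wSagle} shows that any three consecutive right multiplications acting on $\bigl(\svob{\V}{X_n}\bigr)^2$ may be cyclically permuted, and since consecutive $3$-cycles generate the alternating group, the word $w\,R_{x_{\pi(1)}}\cdots R_{x_{\pi(n)}}$ is invariant under all even $\pi$. It then remains to produce a single transposition of operators that also fixes the word; relations \eqref{eq SagleSqr} and \eqref{eq SagleGen}, which allow swapping the two operators matching the variables of a leading product, are designed for exactly this purpose. Once one transposition invariance is adjoined to the alternating-group invariance the word becomes fully symmetric under $\mathrm S_n$, and the alternating sum collapses to $0$. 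Reconciling the sign bookkeeping in this final symmetrization, and in particular tracking how anticommutativity and the operator relations interact for an arbitrary square $w$ rather than a single product, is where the real work lies.
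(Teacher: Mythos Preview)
Your proposal is correct and follows essentially the same route as the paper: reduce via metability and the alternating property to the single nontrivial case $f_{n+1}(x_1,\dots,x_n,w)$ with $w\in\bigl(\svob{\V}{X_n}\bigr)^2$, rewrite it as $w$ acted on by the alternating operator sum $\chi_n=\sum_{\sigma}(-1)^{|\sigma|}R_{x_{\sigma(1)}}\cdots R_{x_{\sigma(n)}}$, and kill this using~\eqref{eq SagleGen}. Your detour through~\eqref{eq wSagle} to obtain $\mathrm A_n$-invariance is harmless but unnecessary: the paper simply observes that $\chi_n$ is skew-symmetric in every pair by construction, while~\eqref{eq SagleGen} applied to $w=(x_ix_j)\tau$ makes $w\chi_n$ symmetric in $x_i,x_j$, so $w\chi_n=0$ at once; your worry about ``an arbitrary square $w$'' dissolves by linearity, since every element of the square is a combination of monomials $(x_ix_j)\tau$.
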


\begin{proof}
By definition, the value of $f_n$ is zero when values of two of
its variables coincide. Hence, by virtue of metability, it remains
to verify that $f_{n+1}$ takes zero value after a substitution
$x_{n+1}=w$, where $w=(x_ix_j)\tau$ for some operator word $\tau$.
Indeed, in view of anticommutativity and  metability, we have
\[
f_{n+1}(x_1,\dots,x_n,w) ={(-1)}^n2w\chi_n,\quad\;
\chi_n=\sum_{\sigma\in {\mathrm S}_n}{(-1)}^{|\sigma|}
R_{x_{\sigma(1)}}\dots R_{x_{\sigma(n)}}.
\]
Thus, on one hand, the action of $\chi_n$ is skew-symmetric with
respect to any pair of its variables by definition. On the other
hand, by virtue of~\eqref{eq SagleGen}, $\chi_n$ acts at $w$
symmetrically w.r.t. $x_i,x_j$. Therefore, this action is zero.
\end{proof}

In view of Lemmas~\ref{lemma-An-Malc2n1} and~\ref{lemma identVn},
to prove the strict inclusions $\V_{n}\subset\V_{n+1}$ it suffices
to check that $f_{n+1}$ takes a nonzero value in $A_{n+1}$.
Indeed,
\[
f_{n+1}\left(\mathbf{1},e_1,\dots,e_n\right) =2\sum_{\sigma\in
{\mathrm S}_n}{(-1)}^{|\sigma|} \mathbf{1}\,R_{e_{\sigma(1)}}\dots
R_{e_{\sigma(n)}} =2n!\,\mathbf{e}_1\dots\mathbf{e}_n\neq0.
\]

\begin{remark}
{\upshape V.~T.~Filippov~\cite{Filippov81} proved the strict
inclusion $\mathrm{Malc}_{n}\subset\mathrm{Malc}_{n+1}$ for all
$n\neq3$ and suggested the hypothesis
$\mathrm{Malc}_3=\mathrm{Malc}_4$. This hypothesis is still known
as a difficult open problem.}
\end{remark}

\subsection{Strictness of inclusions $\V_{0,n}\subset\V_{0,n+1}$}%
\label{SubSec:InclusionsMalcColumn}

\begin{proposition}\label{proposition RelMalcSvobSuper}
The free $\V\text{-superalgebra}$ $\svobs{\V}{Y}$ on an arbitrary
set $Y$ of odd generators satisfies the relations
\begin{align}
wR_xR_yR_z&=wR_zR_xR_y,\label{eq SupwSagle}\\
(xy)\rho R_x\eta R_y&=(xy)\rho R_y\eta R_x,\label{eq SupSagleGen22}\\
x^2\rho R_x\eta R_y&=x^2\rho R_y\eta R_x,\label{eq SupSagleGen31}
\end{align}
where $x,y,z\in Y$, $w\in\bigl(\svobs{\V}{Y}\bigr)^2$, and
$\rho,\eta$ are arbitrary operator words of the form $R_{y_i}\dots
R_{y_j}$, $y_i,\dots, y_j\in Y$.
\end{proposition}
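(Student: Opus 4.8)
The plan is to prove all three relations \eqref{eq SupwSagle}--\eqref{eq SupSagleGen31} by superizing the corresponding identities \eqref{eq wSagle}, \eqref{eq SagleGen} established in Section~\ref{SubSec:PreliminaryIdentities} for the ordinary free $\V\text{-algebra}$, and then exploiting that all the generators $x,y,z\in Y$ are odd. Recall that superization replaces ordinary multiplication by the graded one via the Koszul rule, introducing the sign $(-1)^{ij}$ whenever a variable of parity $i$ passes a variable of parity $j$. Since every $y_k\in Y$ is odd, in the superizations of \eqref{eq wSagle} and \eqref{eq SagleGen} each sign $(-1)^{ij}$ is determined purely by the parities of the two $R$-operators being transposed; all the occurring parities equal $1$.

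First I would treat \eqref{eq SupwSagle}. Superizing \eqref{eq wSagle} one obtains, a priori, $wR_xR_yR_z=\pm\, wR_zR_xR_y$ for some sign depending on the permutation of parities realized by the cyclic shift $(x,y,z)\mapsto(z,x,y)$. That shift is an even permutation of the three odd variables (a $3$-cycle), so the accumulated sign is $(-1)^{2}=+1$, giving exactly \eqref{eq SupwSagle} with coefficient $+1$. Here I would be careful to track which pair of odd variables is actually transposed as the rightmost operator $R_z$ is moved to the front past $R_xR_y$: two transpositions of odd elements occur, each contributing $(-1)^{1\cdot1}=-1$, and the product is $+1$, confirming the sign.

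Next, for \eqref{eq SupSagleGen22} and \eqref{eq SupSagleGen31}, I would superize the generalized relation \eqref{eq SagleGen}, namely $(xy)\rho R_x\eta R_y=(xy)\rho R_y\eta R_x$. In the superized form the two factors $R_x$ and $R_y$ swap positions across the intervening odd operator word $\eta=R_{y_i}\dots R_{y_j}$, and both are odd, so the Koszul sign from moving $R_x$ past the letters of $\eta$ and then past $R_y$ must be compared against moving $R_y$ symmetrically. In the antisymmetric case \eqref{eq SupSagleGen22}, with leading factor $xy$, I would superize \eqref{eq SagleSqr} to get $(xy)[R_x,R_y]_{s}=0$ in the graded sense and combine it with the superized \eqref{eq SupwSagle} exactly as \eqref{eq SagleGen} was derived from \eqref{eq wSagle} and \eqref{eq SagleSqr}; the relation \eqref{eq SupSagleGen31} with leading factor $x^2$ is the companion that arises because, for an odd generator, $x^2$ is a nonzero even element in a supercommutative setting, so the superized square-term relation survives rather than vanishing. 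I would verify that in both cases the net sign accumulated by passing the two odd operators $R_x,R_y$ through the odd word $\eta$ cancels, yielding the coefficient $+1$ on the right-hand side as stated.

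The main obstacle I anticipate is bookkeeping of the Koszul signs: one must show that the sign produced by superization collapses to $+1$ in every case, which requires counting with care exactly how many odd-odd transpositions occur when reordering $R_x,\eta,R_y$ and the cyclic shift in \eqref{eq SupwSagle}. The conceptual content is light—everything follows from the characteristic-zero identities of Section~\ref{SubSec:PreliminaryIdentities} by superizing—but the sign computation is the delicate point, and I would organize it by first fixing the convention for the superpolynomial $\Tilde{f}$ and then reducing each claimed relation to a single parity count over a known permutation of odd symbols.
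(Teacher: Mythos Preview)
Your proposal is correct and follows essentially the same approach as the paper: superize the identities of Section~\ref{SubSec:PreliminaryIdentities} and check that, since all generators are odd, the Koszul signs collapse to $+1$. The paper's proof differs only cosmetically in that it superizes the multilinear Sagle identity \eqref{eq Sagle} directly and then specializes variables in the superalgebra (obtaining $(xy)[R_x,R_y]=0$ and $x^2[R_x,R_y]=0$ as two separate consequences of the same substitution pattern), rather than speaking of superizing the already-derived non-multilinear identities \eqref{eq SagleSqr} and \eqref{eq SagleGen}; this sidesteps any ambiguity about what it means to superize an identity with repeated variables, which is the one place your write-up is slightly imprecise.
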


\begin{proof}
Superizing~\eqref{eq Sagle} and taking into account metability and
superanticommutativity, we have
\begin{align*}
wR_xR_yR_z+{(-1)}^{|w|}wL_zR_xR_y=wR_xR_yR_z-wR_zR_xR_y&=0,\\
(xy)R_xR_y-(yx)R_yR_x+(xy)R_xR_y-(yx)R_yR_x=2(xy)\left[R_x,R_y\right]&=0,\\
x^2R_xR_y-x^2R_yR_x+(xy)R^2_x-(yx)R^2_x=x^2\left[R_x,R_y\right]&=0.
\end{align*}
Thus,~\eqref{eq SupwSagle} is proved and to conclude the proof
of~\eqref{eq SupSagleGen22},~\eqref{eq SupSagleGen31} it remains
to combine~\eqref{eq SupwSagle} with the last two obtained
relations.
\end{proof}

\smallskip

Further, let us denote by
$g_n=g_n\left(a,b,x_1,\dots,x_{n}\right)$ the polynomial
\[
g_n=(ab)\sum_{\sigma\in {\mathrm S}_n}R_{x_{\sigma(1)}}\dots
R_{x_{\sigma(n)}}.
\]

\begin{lemma}\label{lemma SupIdentV0n}
The free $\V\text{-superalgebra}$ $\svobs{\V}{Y_n}$ on a finite
set $Y_n=\{y_1,\dots,y_n\}$ of odd generators satisfies the
superidentity $\Tilde{g}_n=0$.
\end{lemma}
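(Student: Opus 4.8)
The statement asserts that the superpolynomial $\Tilde{g}_n$, obtained by superizing
\[
g_n=(ab)\sum_{\sigma\in\mathrm S_n}R_{x_{\sigma(1)}}\dots R_{x_{\sigma(n)}},
\]
vanishes identically on the free $\V$-superalgebra on $n$ odd generators. My plan is to evaluate $\Tilde{g}_n$ on homogeneous elements $\tilde a,\tilde b,\tilde x_1,\dots,\tilde x_n$ and show the sum collapses. First I would use metability (\ref{eq Metab}) exactly as in the proof of Lemma~\ref{lemma-LowBoundJord}: since $(\tilde a\tilde b)$ is already a product, the whole monomial lies in $\bigl(\svobs{\V}{Y_n}\bigr)^2$, so I may assume every $\tilde x_i$ is one of the odd generators $y_1,\dots,y_n$; any repeated generator kills the term because two equal odd factors satisfy $y^2=0$ together with the supercommutation relation, so the only surviving contributions come from substitutions where $\tilde x_1,\dots,\tilde x_n$ is a permutation of the $n$ distinct generators.

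The heart of the argument is then to reorganize the $n!$ operator words using the relations of Proposition~\ref{proposition RelMalcSvobSuper}. All the $\tilde x_i$ are odd, so (\ref{eq SupwSagle}) gives $wR_xR_yR_z=wR_zR_xR_y$ acting on squares, i.e.\ any cyclic rotation of three consecutive right multiplications is free; and (\ref{eq SupSagleGen22}) lets me swap the outermost pair of right-multiplication variables in $(xy)\rho R_x\eta R_y$. The plan is to use these to show that each monomial $(\tilde a\tilde b)R_{y_{\sigma(1)}}\dots R_{y_{\sigma(n)}}$ can be brought to a fixed normal form, and to track the sign $(-1)^{?}$ produced by the superization (each time an odd variable passes an odd variable a sign $-1$ appears, via the Koszul rule). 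Since all $n$ variables are odd, a transposition of two adjacent operators costs a factor $-1$ from superanticommutativity, so the superized monomial for $\sigma$ carries the sign $(-1)^{|\sigma|}$ relative to the identity word; but the plain sum $g_n$ has all coefficients $+1$, so after superizing the terms group into pairs $\sigma,\tau$ with opposite signs that cancel.

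More precisely, I would exhibit an explicit sign-reversing involution on $\mathrm S_n$: the relations (\ref{eq SupwSagle}) and (\ref{eq SupSagleGen22}) identify $(\tilde a\tilde b)R_{y_{\sigma(1)}}\dots R_{y_{\sigma(n)}}$ with $(\tilde a\tilde b)R_{y_{\tau(1)}}\dots R_{y_{\tau(n)}}$ up to the sign $(-1)^{|\sigma|+|\tau|}$ whenever $\sigma$ and $\tau$ differ by one of the allowed moves. The point is that an odd transposition of the operator word produces a coefficient $-1$ on the superalgebra side while leaving the coefficient $+1$ unchanged on the $g_n$ side, so the two contributions sum to zero. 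Matching up $\sigma$ with $\sigma\cdot(1\,2)$ (say) pairs each even permutation with an odd one, and the allowed relations force the two corresponding values in $\A$ to be equal; thus $\Tilde g_n$ is a sum of cancelling pairs and vanishes.

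\textbf{Main obstacle.} The delicate point, as in Lemma~\ref{lemma-LowBoundJord}, is bookkeeping the Koszul signs: one must verify that the sign picked up when the relations of Proposition~\ref{proposition RelMalcSvobSuper} rearrange the operator word matches exactly the parity $(-1)^{|\sigma|}$ of the permutation, so that the uniform coefficients of $g_n$ (all $+1$) translate into alternating signs after superization and cancel pairwise. Checking that the relations (\ref{eq SupwSagle})–(\ref{eq SupSagleGen22}) generate enough of the symmetric group's relations to reduce every word to normal form, while controlling these signs, is the step that requires care; everything else is routine reduction via metability and $y^2=0$.
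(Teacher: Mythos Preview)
Your outline has the right shape, but the decisive step is not justified, and one auxiliary claim is false.

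The minor point first: your reason why a repeated odd generator among the $\tilde x_i$ kills the term (``$y^2=0$'') is wrong in a Malcev superalgebra. Superanticommutativity reads $xy=-(-1)^{|x||y|}yx$, which for odd $y$ says only $y^2=y^2$, no constraint; indeed the superalgebra built a few lines below in this section has $x^2=\mathbf 1\ne 0$. The correct reason is that $g_n$ is symmetric in $x_1,\dots,x_n$, so $\Tilde g_n$ is supersymmetric and hence alternating in any pair of odd arguments; a repetition then forces vanishing.

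The real gap is your sign-reversing involution. After reducing to
\[
(\tilde a\tilde b)\,\chi_n,\qquad \chi_n=\sum_{\sigma\in\mathrm S_n}(-1)^{|\sigma|}R_{y_{\sigma(1)}}\cdots R_{y_{\sigma(n)}},
\]
you propose to pair $\sigma$ with $\sigma\cdot(1\,2)$ and assert that ``the allowed relations force the two corresponding values to be equal.'' They do not in general. Relation~\eqref{eq SupwSagle} is a $3$-cycle on three consecutive operators, hence an \emph{even} move; it can identify all words within an $A_n$-orbit but cannot cross parity. The only odd move available is \eqref{eq SupSagleGen22} or \eqref{eq SupSagleGen31}, and these are not free transpositions of ``the outermost pair'': they swap $R_{y_i}$ with $R_{y_j}$ \emph{only for the specific $y_i,y_j$ sitting at the base of the product} $(y_iy_j)\rho$ or $y_i^2\rho$ on which the operators act. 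Nothing in your plan connects the fixed choice $(1\,2)$ to what $\tilde a,\tilde b$ actually are. The paper supplies precisely this missing link: since $\tilde a,\tilde b$ are built from the same $n$ generators $y_1,\dots,y_n$, the element $(\tilde a\tilde b)$ lies in the span of monomials of the form $(y_iy_j)\rho$, and for each such monomial \eqref{eq SupSagleGen22} (resp.\ \eqref{eq SupSagleGen31} when $i=j$) makes $\chi_n$ act \emph{symmetrically} in that particular pair; combined with the manifest skew-symmetry of $\chi_n$ this annihilates each monomial. Your pairing becomes valid only once the transposition is allowed to depend on the base of $(\tilde a\tilde b)$ --- and that dependence is exactly the content you left out.
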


\begin{proof}
By definition, the value of $\Tilde{g}_n$ is zero when values of
at least two of its variables $x_1,\dots,x_n$ coincide. Hence, by
virtue of metability, it suffices to consider the case when
$\Tilde{g}_n$ takes a value
\[
\Tilde{g}_n=(ab)\chi_n,\quad\; \chi_n=\sum_{\sigma\in {\mathrm
S}_n}{(-1)}^{|\sigma|} R_{y_{\sigma(1)}}\dots R_{y_{\sigma(n)}}
\]
for some $a,b\in\svobs{\V}{Y_n}$. Thus, on one hand, the action of
$\chi_n$ is skew-symmetric with respect to any pair of its
variables by definition. On the other hand, by virtue of~\eqref{eq
SupSagleGen22} and~\eqref{eq SupSagleGen31}, $\chi_n$ acts at $ab$
symmetrically with respect to some pair of its variables.
Therefore, this action is zero.
\end{proof}

To conclude the proof of strict inclusions
$\V_{0,n}\subset\V_{0,n+1}$ let us construct a
$\V\text{-superalgebra}$ on a set of $n+1$ odd generators that
does not satisfy the superidentity $\Tilde{g}_n=0$.

\smallskip

Let $\mathrm G^{(n)}$ be the Grassmann algebra with the unit
$\mathbf{1}$ on the set $e_1,\dots,e_n$ of anticommuting
generators and $\Bar{\mathrm G}^{(n)}$ be the Grassmann algebra
without unit on the set $\Bar{e}_1,\dots,\Bar{e}_n$ of
anticommuting generators. For an element $w\in\mathrm G^{(n)}$ of
the form $w=e_{i_1}\dots e_{i_k}\neq\mathbf{1}$ we use the
notations $|w|=k$, $\Bar{w}=\Bar{e}_{i_1}\dots \Bar{e}_{i_k}$, and
set $|\mathbf{1}|=0$, $\Bar{\mathbf{1}}=0$. For $i=0,1$, by
$\mathrm G^{(n)}_i$ and $\Bar{\mathrm G}^{(n)}_i$ we denote,
respectively, the subspaces of $\mathrm G^{(n)}$ and $\Bar{\mathrm
G}^{(n)}$ spanned by all the words $w$ and $\Bar{w}$ such that
$|w|\equiv i \pmod 2$. Consider the direct some $M^{(n)}=\mathrm
G^{(n)}+\Bar{\mathrm G}^{(n)}$ of vector spaces, where we set
$M^{(n)}_0=\mathrm G^{(n)}_0+\Bar{\mathrm G}^{(n)}_1$ to be an
even component of $M^{(n)}$ and $M^{(n)}_1=\mathrm
G^{(n)}_1+\Bar{\mathrm G}^{(n)}_0$ to be its odd component.

Let $U^{(n)}$ be the superalgebra on the set $y_1,\dots,y_n$ of
odd generators with null multiplication. Consider a split null
extension $\A^{(n)}=U^{(n)}\dotplus M^{(n)}$ with a
superanticommutative multiplication induced by the actions
\[
w\cdot y_k=we_k, \quad \Bar{w}\cdot y_k=\Bar{w}\Bar{e}_k, \quad
w\in\mathrm G^{(n)}.
\]

By virtue of skew-symmetry of the elements of ${\mathrm G}^{(n)}$
and $\Bar{\mathrm G}^{(n)}$ with respect to their generators it is
not hard to prove the following

\begin{lemma}\label{lemma-An-Malc2super}
$\A^{(n)}$ is a $\V\text{-superalgebra}$.
\end{lemma}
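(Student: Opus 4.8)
The goal is to show that $\A^{(n)}$, the split null extension $U^{(n)}\dotplus M^{(n)}$ constructed above, is a $\V$-superalgebra, where $\V=\mathrm{Malc}^{(2)}$. Since $\A^{(n)}$ is metabelian and superanticommutative by construction, the only thing to verify is that its Grassmann envelope satisfies the Sagle identity, which (as in the proof of Lemma~\ref{lemma-A-Malc2super}) reduces to checking the superization of~\eqref{eq wSagle} on $M^{(n)}$. Concretely, I would verify that for homogeneous generators $z_1,z_2,z_3\in\{y_1,\dots,y_n\}$ the operators of right multiplication obey the relation
\[
R_{z_1}R_{z_2}R_{z_3}={(-1)}^{|z_1||z_2|+|z_1||z_3|}R_{z_2}R_{z_3}R_{z_1}
\]
when acting on $M^{(n)}$, since $\bigl(\A^{(n)}\bigr)^2\subseteq M^{(n)}$ and all products landing outside $M^{(n)}$ vanish by metability.

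The plan is to compute the action of $R_{y_i}R_{y_j}R_{y_k}$ on a basis element $w$ or $\Bar{w}$ of $M^{(n)}$ directly. For $w\in\mathrm G^{(n)}$ the three right multiplications append generators, giving $w\,e_ie_je_k$ (up to the parity signs dictated by the Koszul rule, since the $y$'s are odd), and likewise $\Bar{w}\,\Bar{e}_i\Bar{e}_j\Bar{e}_k$ on the barred component. First I would record that appending to a word of length~$\ell$ an odd generator carries sign factors governed by $\ell\bmod 2$, which is exactly what the parity~$|w|$ encodes. Then the required relation follows because the ordered product $e_ie_je_k$ in the Grassmann algebra is itself skew-symmetric in $i,j,k$, so cyclically permuting the three appended generators and comparing with the sign $(-1)^{|z_1||z_2|+|z_1||z_3|}$ produced by the Koszul rule yields an identity. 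Because each $z_\ell$ is odd, the parity signs reduce to fixed powers of $-1$, and matching them against the skew-symmetry of the Grassmann product is a finite bookkeeping check on both $\mathrm G^{(n)}$ and $\Bar{\mathrm G}^{(n)}$.

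The hard part, such as it is, will be keeping the sign bookkeeping consistent: one must track both the anticommutation signs inherent in the Grassmann multiplication and the extra Koszul signs introduced when odd operators pass each other, and confirm these two sources of signs combine to give precisely the coefficient in the superized Sagle relation. The key observation making this routine is the skew-symmetry of the elements of $\mathrm G^{(n)}$ and $\Bar{\mathrm G}^{(n)}$ with respect to their generators, already highlighted before the statement of the lemma; once this is invoked, the relation holds on every homogeneous basis element and hence on all of $M^{(n)}$ by linearity. I would therefore conclude that the superization of~\eqref{eq wSagle} holds in $\A^{(n)}$, so that $\Gob{\A^{(n)}}$ satisfies the Sagle identity and lies in $\V$; combined with metability and superanticommutativity this establishes that $\A^{(n)}$ is a $\V$-superalgebra.
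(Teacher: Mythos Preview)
Your approach is correct and essentially the same as the paper's, which gives only a one-line proof invoking the skew-symmetry of the elements of $\mathrm{G}^{(n)}$ and $\Bar{\mathrm{G}}^{(n)}$ with respect to their generators. One clarification: the action $w\cdot y_k=we_k$ itself carries no parity sign (contrary to your remark about ``sign factors governed by $\ell\bmod 2$''), so since all the $y_i$ are odd the superized relation reduces to plain cyclic invariance $R_{y_i}R_{y_j}R_{y_k}=R_{y_j}R_{y_k}R_{y_i}$ on $M^{(n)}$, which is precisely the Grassmann identity $e_ie_je_k=e_je_ke_i$ (and likewise for the barred generators).
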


\smallskip

Further, we consider an extension
$\Bar{\A}^{(n+1)}=F\cdot x+\A^{(n)}$
of the superalgebra
$\A^{(n)}$
such that
\[
\Bar{\A}^{(n+1)}_0=\A^{(n)}_0, \quad\; \Bar{\A}^{(n+1)}_1=F\cdot x
+\A^{(n)}_1,
\]
and all nonzero products of basis elements with~$x$
are the following:
\[
x^2=\mathbf{1},
\quad\;
x\cdot y_i=y_i\cdot x=\Bar{e}_i,
\quad\;
 x\cdot \Bar{w}
={(-1)}^{|w|}\Bar{w}\cdot x
=\frac{1}{2}w, \quad \Bar{w}\in \Bar{\mathrm G}^{(n)}.
\]

\begin{lemma}\label{lemma-An+1-Malc2super}
$\Bar{\A}^{(n+1)}$ is a $\V\text{-superalgebra}$ generated by
$n+1$ odd element.
\end{lemma}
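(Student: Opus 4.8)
The plan is to establish the three superized defining properties of a $\mathcal{V}$-superalgebra for $\V=\mathrm{Malc}^{(2)}$ — metability, superanticommutativity, and the superized Sagle identity \eqref{eq Sagle} — and, separately, to exhibit $n+1$ odd generators. Since $\A^{(n)}$ is already a $\V$-superalgebra by Lemma~\ref{lemma-An-Malc2super}, the entire burden is to control the new products carrying an $x$-factor.

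First I would settle generation. As $x,y_1,\dots,y_n$ are all odd, it suffices to show they generate $\Bar{\A}^{(n+1)}$. The element $x$ bootstraps everything: from $x^2=\mathbf 1$ we recover $\mathbf 1$, then $\mathbf 1\cdot y_k=e_k$ and $x\cdot y_k=\Bar{e}_k$ produce the degree-one letters of $\mathrm{G}^{(n)}$ and $\Bar{\mathrm{G}}^{(n)}$, and iterating right multiplication by the $y_k$ (which append $e_k$, resp.\ $\Bar{e}_k$) yields all of $M^{(n)}$. Next, metability and superanticommutativity are short. The only products leaving $\A^{(n)}$ are those with an $x$-factor, and each of $x^2=\mathbf 1$, $x\cdot y_i=\Bar{e}_i$, $x\cdot\Bar{w}=\tfrac12 w$ lands in $M^{(n)}$; with $(\A^{(n)})^2\subseteq M^{(n)}$ this gives $(\Bar{\A}^{(n+1)})^2\subseteq M^{(n)}$, while $M^{(n)}$ stays a square-zero ideal, so \eqref{eq Metab} holds. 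For superanticommutativity the delicate point is $x\cdot\Bar{w}$: because $\Bar{\mathrm{G}}^{(n)}$ carries the shifted grading (the parity of $\Bar{w}$ is that of $|w|+1$), I would check that the prescribed law $x\cdot\Bar{w}=(-1)^{|w|}\Bar{w}\cdot x$ is precisely what superanticommutativity between the odd $x$ and $\Bar{w}$ forces, while $x^2$ and $x\cdot y_i$ are automatically consistent since $x$ and the $y_i$ are all odd.

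The heart of the proof, and the main obstacle, is the superized Sagle identity. Because $\A^{(n)}$ already satisfies it, I only need substitutions in which at least one variable equals $x$. Using $(\Bar{\A}^{(n+1)})^2\subseteq M^{(n)}$ and $(M^{(n)})^2=0$, any substitution placing two or more variables in $M^{(n)}$ kills every cyclic term, so I may reduce to two families: (b) one variable set to a product $w\in(\Bar{\A}^{(n+1)})^2$ and the rest to generators, which is exactly the superized relation \eqref{eq SupwSagle} (the superization of \eqref{eq wSagle}) and, all variables being odd, amounts to the plain cyclic invariance $R_{z_1}R_{z_2}R_{z_3}=R_{z_3}R_{z_1}R_{z_2}$ of the right multiplications acting on $M^{(n)}$; and (a) all four variables set to generators $x,y_k$. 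For (b) I would argue case by case on how many $z_i$ equal $x$: that $R_x^2=0$ on $M^{(n)}$ (since $R_x$ maps into $\mathrm{G}^{(n)}$, which $R_x$ annihilates), that the $R_{y_k}$ merely append Grassmann letters whose triple product is cyclically invariant, and that a single $R_x$ interlaced with $R_{y_j}R_{y_k}$ yields matching signs through $e_ke_j=-e_je_k$.

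For family (a) the cancellation is genuinely superalgebraic, and this is where I expect essentially all the work and the only real risk of error. One must attach the Koszul signs $(-1)^{\epsilon(\sigma)}$ to the four cyclic permutations, and it is exactly these signs — for instance the $-1$ on the two ``rotate by one'' terms when all four parities are odd — that collapse the two surviving monomials, producing cancellations such as $\Bar{e}_b\Bar{e}_c\Bar{e}_d-\Bar{e}_b\Bar{e}_c\Bar{e}_d=0$ in the one-$x$ case and $e_ce_d-\tfrac12 e_ce_d-\tfrac12 e_ce_d=0$ in the two-$x$ case, while the three- and four-$x$ cases vanish outright because $\mathbf 1 R_x=0$ and $R_x^2=0$. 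The bulk of the proof, then, is the disciplined sign bookkeeping across these subcases and across the interaction between the unbarred part $\mathrm{G}^{(n)}$ and the shifted-graded barred part $\Bar{\mathrm{G}}^{(n)}$; once that is organized, metability and superanticommutativity make the reduction to (a) and (b) routine.
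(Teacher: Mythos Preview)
Your proposal is correct and follows essentially the same approach as the paper: reduce to the superized Sagle identity and verify it case by case for substitutions involving the new odd generator $x$, relying on Lemma~\ref{lemma-An-Malc2super} for substitutions entirely inside $\A^{(n)}$. Your organization into families (a) (all variables among the generators $x,y_i$) and (b) (one variable in $M^{(n)}$) is a cleaner way of stating the reduction the paper leaves implicit; the paper instead writes out the five substantial subcases directly, invoking ``skew-symmetry of the elements of $\mathrm G^{(n)}$ and $\bar{\mathrm G}^{(n)}$'' to dispatch the one-$x$ subcase of your family (a), and omitting the trivially vanishing three- and four-$x$ subcases you note.
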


\begin{proof}
By Lemma~\ref{lemma-An-Malc2super}, taking into account
skew-symmetry of the elements of ${\mathrm G}^{(n)}$ and
$\Bar{\mathrm G}^{(n)}$ with respect to their generators, it
suffices to verify that the superization of~\eqref{eq Sagle} holds
in the following cases:
\begin{multline*}
x^2R_{y_i}R_{y_j}-(x\cdot y_i)R_{y_j}R_x-(y_j\cdot x)R_xR_{y_i}
=(\mathbf{1}\cdot y_i)\cdot y_j-(\Bar{e}_i\cdot y_j)\cdot x-(\Bar{e}_j\cdot x)\cdot y_i=\\
=e_i\cdot y_j-\Bar{e}_i\Bar{e}_j\cdot x+\frac{1}{2}e_j\cdot y_i
=e_ie_j-\frac{1}{2}e_ie_j+\frac{1}{2}e_je_i=0,
\end{multline*}
\begin{multline*}
(x\cdot y_i)R_xR_{y_j}-(y_i\cdot x)R_{y_j}R_x+(x\cdot y_j)R_xR_{y_i}-(y_j\cdot x)R_{y_i}R_x=\\
=(\Bar{e}_i\cdot x)\cdot y_j-(\Bar{e}_i\cdot y_j)\cdot x+(\Bar{e}_j\cdot x)\cdot y_i-(\Bar{e}_j\cdot y_i)\cdot x=\\
=-\frac{1}{2}e_i\cdot y_j-\Bar{e}_i\Bar{e}_j\cdot
x-\frac{1}{2}e_j\cdot y_i-\Bar{e}_j\Bar{e}_i\cdot x
=-\frac{1}{2}e_ie_j-\frac{1}{2}e_ie_j-\frac{1}{2}e_je_i-\frac{1}{2}e_je_i=0,
\end{multline*}
\begin{multline*}
(x\cdot\Bar{w})R_{y_i}R_{y_j}-{(-1)}^{|w|}(\Bar{w}\cdot y_i)R_{y_j}R_x=\\
=\frac{1}{2}(w\cdot y_i)\cdot y_j-{(-1)}^{|w|}\Bar{w}\Bar{e}_i\Bar{e}_j\cdot x
=\frac{1}{2}we_ie_j-\frac{1}{2}we_ie_j=0,
\end{multline*}
\begin{multline*}
(\Bar{w}\cdot x)R_{y_i}R_{y_j}-{(-1)}^{|w|}(y_j\cdot \Bar{w})R_xR_{y_i}
=\frac{{(-1)}^{|w|}}{2}(w\cdot y_i)\cdot y_j-(\Bar{w}\Bar{e}_j\cdot x)\cdot y_i=\\
=\frac{{(-1)}^{|w|}}{2}we_ie_j-\frac{{(-1)}^{|w|+1}}{2}we_j\cdot y_i
=\frac{{(-1)}^{|w|}}{2}\left(we_ie_j+we_je_i\right)=0,
\end{multline*}
\begin{multline*}
(\Bar{w}\cdot y_i)R_xR_{y_j}-{(-1)}^{|w|}(y_j\cdot \Bar{w})R_{y_i}R_x
=(\Bar{w}\bar{e}_i\cdot x)\cdot y_j-(\Bar{w}\bar{e}_j\cdot y_i)\cdot x=\\
=\frac{{(-1)}^{|w|+1}}{2}we_i\cdot y_j-\Bar{w}\Bar{e}_j\Bar{e_i}\cdot x
=-\frac{{(-1)}^{|w|}}{2}\left(we_ie_j+we_je_i\right)=0.
\end{multline*}
\end{proof}

In view of Lemmas~\ref{lemma
SupIdentV0n}--\ref{lemma-An+1-Malc2super}, to prove the strict
inclusions $\V_{0,n}\subset\V_{0,n+1}$ it remains to check that
$\Tilde{g}_n$ takes a nonzero value in $\Bar{\A}^{(n+1)}$. Indeed,
\[
\Tilde{g}_n\left(x,x,y_1,\dots,y_n\right) =\sum_{\sigma\in
{\mathrm S}_n}{(-1)}^{|\sigma|} \mathbf{1}\,R_{y_{\sigma(1)}}\dots
R_{y_{\sigma(n)}} =n!\,e_1\dots e_n\neq0.
\]

Theorem~\ref{theorem Malc2} is proved.

\smallskip

\begin{remark}
{\upshape We stress that the description of the inclusions in the
lattice $\lt{\V}$ obtained in Theorem~\ref{theorem Malc2} is more
detailed than one needs to deduce the uniqueness of the basic
superrank $(1,1)$ for $\V$. Actually proving that
$\bsp{\V}=\bigl\{(1,1)\bigr\}$ we could restrict with establishing
the equality $\V_{1,1}=\Tilde{\V}$ and the strict inclusions
$\V_n\subset\V$, $\V_{0,n}\subset\Tilde{\V}$. Namely, in view of
Lemmas~\ref{lemma-A-Malc2super},~\ref{lemma identVn},
and~\ref{lemma SupIdentV0n}, it is enough to check that the
superpolynomials $\Tilde{f_n}$ $\Tilde{g_n}$ don't vanish on some
elements of the superalgebra $\A$.}
\end{remark}

\smallskip

\begin{remark}
{\upshape
It is not hard to check that in fact the superalgebra
$\Bar{\A}^{(n+1)}$
is a Lie superalgebra.
The variety
$\mathrm{Lie}^{(2)}$
of metabelian Lie algebras has a finite basic rank:
it is generated by a 2--dimensional non-abelian algebra and hence
$\mathit{r_{b}}\bigl(\mathrm{Lie}^{(2)}\bigr)=2$.
Since we are interested in metabelian varieties of infinite basic rank,
it was not planed to consider the variety
$\mathrm{Lie}^{(2)}$.
But in view of the results of this section we notice that the lattices
$\ltt{\mathrm{Lie}^{(2)}}$
and
$\ltt{\mathrm{Malc}^{(2)}}$
have a quite similar structure in spite of the difference in their initial chains.
Indeed, one can easily prove that
$\mathrm{Lie}^{(2)}$
also
possesses the basic superrank~$(1,1)$.
Moreover, the strict inclusions
$\mathrm{Lie}^{(2)}_{\,0,n}\subset\mathrm{Lie}^{(2)}_{\,0,n+1}$
are provided by the fact that the free metabelian Lie superalgebra on~$n$
odd generators is nilpotent of index~$n+2$.
Thus the lattice
$\lt{\V}$,
for
$\V=\mathrm{Lie}^{(2)}$,
has the form
\[
\begin{matrix}
            &          & \mathcal V_{1,0}   & \subset  & \mathcal V_{2,0}   & =\cdots= & \mathcal V_{r,0}   & =  \cdots   \\
            &          &  \cap      &          & \shortmid\shortmid      &                      & \shortmid\shortmid       &        \\
 \mathcal V_{0,1}   & \subset  & \mathcal V_{1,1}   & =  & \mathcal V_{2,1}   & =\cdots= & \mathcal V_{r,1}   & =\cdots   \\
 \cap &          &  \shortmid\shortmid      &          & \shortmid\shortmid       &                      & \shortmid\shortmid       &         \\
 \mathcal V_{0,2}   & \subset & \mathcal V_{1,2}   &=  & \mathcal V_{2,2}   & =\cdots= & \mathcal V_{r,2}   & =\cdots  \\
  \cap      &          &  \shortmid\shortmid      &          & \shortmid\shortmid       &                      & \shortmid\shortmid       &         \\
 \mathcal V_{0,3}   & \subset & \mathcal V_{1,3}   &=  & \mathcal V_{2,3}   & =\cdots= & \mathcal V_{r,3}   & =\cdots  \\
   \cap       &          &  \shortmid\shortmid      &          & \shortmid\shortmid       &                      & \shortmid\shortmid       &         \\
\hdotsfor{8}
\end{matrix}
\]
In particular, $\bspp{\mathrm{Lie}^{(2)}}=\bigl\{(2,0),(1,1)\bigr\}$.}
\end{remark}

\section{Metabelian algebras}%
\label{Sec:MetabelianAlgebras}

\subsection{Endomorphisms corresponding to Young tables}%
\label{SubSec:EndomorphismsCorrespondingToYoungTables}

Let $d$ be a \textit{Young diagram}
of order~$n$ and $\tau$ be a permutation of ${\mathrm S}_{n}$.
Following~\cite[Chap.~3]{Bahturin87}, by $\tau d$ we denote the
\textit{Young table} obtained by filing in the diagram~$d$ with
the numbers $\tau(1),\dots,\tau(n)$ in the order from the top down
and from left to right. The group ${\mathrm S}_{n}$ acts naturally
on the set of all tables corresponding to~$d$: $\sigma(\tau
d)=(\sigma\tau)d$. By ${\mathrm C}_{\tau d}$ we denote the
\textit{column stabilizer of the table~$\tau d$}, i.~e. the
subgroup of ${\mathrm S}_{n}$ consisting of all permutations
preserving the set of symbols in each column of~$\tau d$.
Similarly, one can define the \textit{row stabilizer~${\mathrm
R}_{\tau d}$  of~$\tau d$}.

By $\mathcal P_n$ we denote the \textit{linear span of all
associative multilinear words of degree~$n$ on the set~$X_n$ of
variables.} For every Young table $\tau d$ of order $n'\leqslant
n$ we define the following endomorphisms:
\begin{gather*}
\varphi_{\tau d},\psi_{\tau d}:\mathcal P_n\mapsto \mathcal P_n,\\
\varphi_{\tau d}\left(w\right) =\sum_{\sigma\in{\mathrm C}_{\tau
d}} \sum_{\rho\in{\mathrm R}_{\sigma\tau d}}
{(-1)}^{\left|\sigma\right|}
w\left(x_{\rho\sigma(1)},\dots,x_{\rho\sigma(n')},x_{n'+1},\ldots,x_n\right),\\
\psi_{\tau d}\left(w\right) =\sum_{\rho\in{\mathrm R}_{\tau d}}
\sum_{\sigma\in{\mathrm C}_{\rho\tau d}}
{(-1)}^{\left|\sigma\right|}
w\left(x_{\sigma\rho(1)},\dots,x_{\sigma\rho(n')},x_{n'+1},\ldots,x_n\right).
\end{gather*}

For example, if $ \tau d= \lefteqn{ \phantom{
\begin{aligned}
\vline\hspace{5pt}
\begin{matrix}
\hline
1      \\
\hline
3      \\
\hline
\end{matrix}
\vline
\end{aligned}\;}
\vline}
\begin{aligned}
\vline\hspace{5pt}
\begin{matrix}
\hline
1 & 2 \\
\hline
3 & 4 \\
\hline
\end{matrix}
\hspace{5pt}\vline
\end{aligned}\:
$ and $w=x_1x_2x_3x_4$, then $\varphi_{\tau d}\left(w\right)$ can
be calculated as follows:
\begin{multline*}
\varphi_{\tau d}(w)= \sum_{\rho\in{\mathrm R}_{\tau d}}
x_{\rho(1)}x_{\rho(2)}x_{\rho(3)}x_{\rho(4)}
-\sum_{\rho\in{\mathrm R}_{(13)\tau d}}
x_{\rho(3)}x_{\rho(2)}x_{\rho(1)}x_{\rho(4)}-\\
-\sum_{\rho\in{\mathrm R}_{(24)\tau d}}
x_{\rho(1)}x_{\rho(4)}x_{\rho(3)}x_{\rho(2)}
+\sum_{\rho\in{\mathrm R}_{(13)(24)\tau d}}
x_{\rho(3)}x_{\rho(4)}x_{\rho(1)}x_{\rho(2)}
\end{multline*}
and, taking into account that
\[
(13)\tau d= \lefteqn{ \phantom{
\begin{aligned}
\vline\hspace{5pt}
\begin{matrix}
\hline 3      \\
\hline 1      \\
\hline
\end{matrix}
\vline
\end{aligned}\;}
\vline}
\begin{aligned}
\vline\hspace{5pt}
\begin{matrix}
\hline
3      & 2 \\
\hline
1      & 4 \\
\hline
\end{matrix}
\hspace{5pt}\vline
\end{aligned}
\:,\qquad (24)\tau d= \lefteqn{ \phantom{
\begin{aligned}
\vline\hspace{5pt}
\begin{matrix}
\hline 1      \\
\hline 3      \\
\hline
\end{matrix}
\vline
\end{aligned}\;}
\vline}
\begin{aligned}
\vline\hspace{5pt}
\begin{matrix}
\hline
1      & 4 \\
\hline
3      & 2 \\
\hline
\end{matrix}
\hspace{5pt}\vline
\end{aligned}
\:,\qquad (13)(24)\tau d= \lefteqn{ \phantom{
\begin{aligned}
\vline\hspace{5pt}
\begin{matrix}
\hline 3      \\
\hline 1      \\
\hline
\end{matrix}
\vline
\end{aligned}\;}
\vline}
\begin{aligned}
\vline\hspace{5pt}
\begin{matrix}
\hline
3      & 4 \\
\hline
1      & 2 \\
\hline
\end{matrix}
\hspace{5pt}\vline
\end{aligned}\:,
\]
we get
\begin{multline*}
\varphi_{\tau d}(w)= (x_1\circ x_2)(x_3\circ x_4)
-(x_3\circ x_2)(x_1\circ x_4)-\\
-(x_1\circ x_4)(x_3\circ x_2)
+(x_3\circ x_4)(x_1\circ x_2)=\\
=(x_1\circ x_2)\circ(x_3\circ x_4)-(x_3\circ x_2)\circ(x_1\circ
x_4).
\end{multline*}

Similarly, for the same table $\tau d$ and $w'=x_1x_3x_2x_4$, we
have
\begin{multline*}
\psi_{\tau d}\left(w'\right) =\sum_{\sigma\in{\mathrm C}_{\tau d}}
{(-1)}^{\left|\sigma\right|}
x_{\sigma(1)}x_{\sigma(3)}x_{\sigma(2)}x_{\sigma(4)}
+\sum_{\sigma\in{\mathrm C}_{(12)\tau d}}
{(-1)}^{\left|\sigma\right|}
x_{\sigma(2)}x_{\sigma(3)}x_{\sigma(1)}x_{\sigma(4)}+\\
+\sum_{\sigma\in{\mathrm C}_{(34)\tau d}}
{(-1)}^{\left|\sigma\right|}
x_{\sigma(1)}x_{\sigma(4)}x_{\sigma(2)}x_{\sigma(3)}
+\sum_{\sigma\in{\mathrm C}_{(12)(34)\tau d}}
{(-1)}^{\left|\sigma\right|}
x_{\sigma(2)}x_{\sigma(4)}x_{\sigma(1)}x_{\sigma(3)}
\end{multline*}
and, observing that
\[
(12)\tau d= \lefteqn{ \phantom{
\begin{aligned}
\vline\hspace{5pt}
\begin{matrix}
\hline 2      \\
\hline 3      \\
\hline
\end{matrix}
\vline
\end{aligned}\;}
\vline}
\begin{aligned}
\vline\hspace{5pt}
\begin{matrix}
\hline
2      & 1 \\
\hline
3      & 4 \\
\hline
\end{matrix}
\hspace{5pt}\vline
\end{aligned}
\:,\qquad (34)\tau d= \lefteqn{ \phantom{
\begin{aligned}
\vline\hspace{5pt}
\begin{matrix}
\hline 1      \\
\hline 4      \\
\hline
\end{matrix}
\vline
\end{aligned}\;}
\vline}
\begin{aligned}
\vline\hspace{5pt}
\begin{matrix}
\hline
1      & 2 \\
\hline
4      & 3 \\
\hline
\end{matrix}
\hspace{5pt}\vline
\end{aligned}
\:,\qquad (12)(34)\tau d= \lefteqn{ \phantom{
\begin{aligned}
\vline\hspace{5pt}
\begin{matrix}
\hline 2      \\
\hline 4      \\
\hline
\end{matrix}
\vline
\end{aligned}\;}
\vline}
\begin{aligned}
\vline\hspace{5pt}
\begin{matrix}
\hline
2      & 1 \\
\hline
4      & 3 \\
\hline
\end{matrix}
\hspace{5pt}\vline
\end{aligned}
\:,
\]
we obtain
\begin{multline*}
\psi_{\tau d}\left(w'\right)
=[x_1,x_3][x_2,x_4]+[x_2,x_3][x_1,x_4]
+[x_1,x_4][x_2,x_3]+[x_2,x_4][x_1,x_3]=\\
=[x_1,x_3]\circ[x_2,x_4]+[x_2,x_3]\circ[x_1,x_4].
\end{multline*}

We stress that, by definition, the polynomial $\varphi_{\tau
d}\left(w\right)$ is skew-symmetric with respect to each pair of
its variables whose indices stand at the same column of $\tau d$.
Similarly, $\psi_{\tau d}\left(w\right)$ is symmetric with respect
to each pair of its variables whose indices stand at the same row
of $\tau d$.

In what follows, the introduced endomorphisms are considered for
rectangular Young tables only. The notation $k\times m$ means that
a table has $k$ rows and $m$ columns.

By $\mathfrak A_{r,s}$ denote the \textit{free associative
superalgebra on the set $Y_{r,s}$ of~$r$ even and~$s$ odd
generators}.

\begin{lemma}\label{lemma Endphi}
Let $\xi:\mathcal P_n\mapsto \mathfrak A_{r,s}$ be a homomorphism
induced by a mapping $\xi:X_n\mapsto Y_{r,s}$, where
$n\geqslant(r+1)(rs+s+1)$. Then for every rectangular $(r+1)\times
(rs+s+1)$ table $\tau d$ and every word $w\in\mathcal P_n$, the
superization of the polynomial $\xi\left(\varphi_{\tau
d}\left(w\right)\right)$ is equal to zero.
\end{lemma}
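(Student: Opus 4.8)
The plan is to read the statement through the Grassmann envelope and then run a pigeonhole between the rows and the columns of the rectangular table $\tau d$. By the correspondence between identities of a Grassmann envelope and superidentities recalled in Section~1, the superization of $\xi(\varphi_{\tau d}(w))$ vanishes precisely when $\varphi_{\tau d}(w)$ evaluates to $0$ in $\Gob{\mathfrak A_{r,s}}$ under the substitution $x_i\mapsto g_i\otimes\xi(x_i)$, where each $g_i$ is a homogeneous Grassmann element of the same parity as $\xi(x_i)$. The hypothesis $n\geqslant(r+1)(rs+s+1)$ serves only to fit the $(r+1)\times(rs+s+1)$ table with $n'=(r+1)(rs+s+1)\leqslant n$ distinct indices; the untouched variables $x_{n'+1},\dots,x_n$ are irrelevant.

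First I would dispose of the even generators, which is the elementary half. Since $\varphi_{\tau d}(w)$ is skew-symmetric in the variables of each column, and since even variables acquire no Koszul sign, the superized polynomial stays skew-symmetric in any two column-neighbours sent to even generators. Consequently, if $\xi$ maps two cells of one column to the \emph{same} even generator, the value is already $0$. As there are only $r$ even generators while each column has height $r+1$, a nonzero value forces each of the $rs+s+1$ columns to contain at least one odd cell, so the table carries at least $rs+s+1$ odd cells.

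The matching upper bound is the crux and cannot be produced in the same pointwise way. I would use the full Young-symmetrizer structure, recording that $\varphi_{\tau d}$ is the signed column sum $\sum_{\sigma\in C_{\tau d}}(-1)^{|\sigma|}\sigma$ applied to the row-symmetric polynomial $\sum_{\rho\in R_{\tau d}}\rho\,w$. Through the standard properties of these operators on the multilinear space in their super-graded form (cf. \cite[Chap.~3]{Bahturin87}), the super-evaluations of $\varphi_{\tau d}(w)$ span the image of the Young symmetrizer of shape $\tau d$ on a super vector space of even dimension $r$ and odd dimension $s$, and this image is nonzero only if $\tau d$ admits a semistandard super-tableau over the alphabet $1<\dots<r<\bar 1<\dots<\bar s$, with even letters strictly increasing down columns and odd letters strictly increasing along rows. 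The latter condition allows at most $s$ odd letters per row, hence at most $s(r+1)=rs+s$ odd cells in total, contradicting the lower bound $rs+s+1$. Therefore no such tableau exists, the image is zero, and the superization of $\xi(\varphi_{\tau d}(w))$ vanishes for every $w$ and every $\xi$.

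The main obstacle is exactly this odd/row bound. One would like to mirror the even case and claim that two cells of one row carrying the same odd generator kill the value, but this fails: $\varphi_{\tau d}(w)$ is genuinely skew only down columns, and under the Koszul rule that column skew-symmetry turns into \emph{symmetry} on odd arguments, so an odd generator may legitimately repeat inside a column and no single specialization exhibits the cancellation. The effect is purely global, arising from the interaction of the column alternation with the row symmetrization, which is why the tableau/straightening input is indispensable; verifying that the semistandard condition really governs non-vanishing of the super-evaluation is the technical heart of the argument.
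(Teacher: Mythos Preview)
Your approach via super--Schur--Weyl duality (the Berele--Regev hook condition) is correct in substance: the rectangle of height $r+1$ and width $rs+s+1$ does not fit in the $(r,s)$-hook, so the corresponding super Schur module vanishes and the superized evaluation is zero. But it is considerably heavier than what the paper does, and your diagnosis that ``the tableau/straightening input is indispensable'' is mistaken.

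The paper's argument is entirely elementary and avoids any representation-theoretic machinery. The point you missed is that, by the very definition
\[
\varphi_{\tau d}(w)=\sum_{\sigma\in{\mathrm C}_{\tau d}}(-1)^{|\sigma|}f'_\sigma,
\qquad
f'_\sigma=\sum_{\rho\in{\mathrm R}_{\sigma\tau d}}w\bigl(x_{\rho\sigma(1)},\dots,x_{\rho\sigma(n')},x_{n'+1},\dots,x_n\bigr),
\]
each inner summand $f'_\sigma$ is \emph{symmetric} in the variables whose indices lie in the same row of the permuted table $\sigma\tau d$. Under superization, ordinary symmetry becomes skew-symmetry on odd arguments, so if two cells of some row of $(\sigma\tau d)^\xi$ carry the same odd generator then $\widetilde{\xi(f'_\sigma)}=0$. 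Hence if $\widetilde{\xi(f)}\neq0$ there is at least one $\sigma$ with $\widetilde{\xi(f'_\sigma)}\neq0$, and for that $\sigma$ each of the $r+1$ rows of $\sigma\tau d$ contains at most $s$ odd cells, giving at most $(r+1)s$ odd cells in total. Combined with your own bound of at most $r(rs+s+1)$ even cells, the total is at most $(r+1)(rs+s+1)-1$, a contradiction.

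So the ``row half'' does mirror the ``column half'' after all; the trick is to pass from the whole polynomial $\varphi_{\tau d}(w)$, which is not row-symmetric, to the individual pieces $f'_\sigma$, which are. Your worry that ``no single specialization exhibits the cancellation'' evaporates once you look at one $f'_\sigma$ at a time. The super-tableau argument buys generality you do not need here; the paper's term-by-term pigeonhole is both shorter and self-contained.
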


\begin{proof}
By definition of the polynomial $f=\varphi_{\tau
d}\left(w\right)$, we have $f=\sum_{\sigma\in{\mathrm C}_{\tau
d}}{(-1)}^{\left|\sigma\right|}f'_{\sigma}$, where
\[
f'_{\sigma}=\sum_{\rho\in{\mathrm R}_{\sigma\tau d}}
w\left(x_{\rho\sigma(1)},\dots,x_{\rho\sigma(n')},x_{n'+1},\ldots,x_n\right),\quad
n'=(r+1)(rs+s+1).
\]
Let us substitute the element $i$ by $\xi(x_i)$ in each cell of
the table $\tau d$. The obtained table we denote by ${(\tau
d)}^\xi$.

First, in view of skew-symmetry of $f$ w.r.t. each pair of its
variables whose indices stand at the same column of $\tau d$, we
note that $\widetilde{\xi(f)}$ can be non-zero only if none of the
columns of ${(\tau d)}^\xi$ contains the same even generator of
$\mathfrak A_{r,s}$ twice. Consequently, the whole number of even
elements in ${(\tau d)}^\xi$ is not more then $r(rs+s+1)$.

Further, if $\widetilde{\xi(f)}\neq0$, then we may assume that
$\widetilde{\xi(f'_{\sigma})}\neq0$ at least for one
$\sigma\in{\mathrm C}_{\tau d}$. In this case, we stress that
$f'_{\sigma}$ is symmetric w.r.t. each pair of its variables whose
indices stand at the same row of $\sigma\tau d$. Hence, none of
the rows of ${(\sigma\tau d)}^\xi$ contains the same odd element
twice. Consequently, the whole number of odd elements in ${(\tau
d)}^\xi$ is not more then $(r+1)s$.

Therefore, the number of elements in ${(\tau d)}^\xi$ is not more
then
\[
r(rs+s+1)+(r+1)s=(r+1)(rs+s+1)-1.
\]
The obtained contradiction completes the proof.
\end{proof}

By the similar arguments one can prove the following

\begin{lemma}\label{lemma Endpsi}
Let $\xi:\mathcal P_n\mapsto \mathfrak A_{r,s}$ be a homomorphism
induced by a mapping $\xi:X_n\mapsto Y_{r,s}$, where
$n\geqslant(rs+r+1)(s+1)$. Then for every rectangular
$(rs+r+1)\times (s+1)$ table $\tau d$ and every word $w\in\mathcal
P_n$, the superization of $\xi\left(\psi_{\tau
d}\left(w\right)\right)$ is equal to zero.
\end{lemma}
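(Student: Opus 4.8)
Let $\xi:\mathcal P_n\mapsto \mathfrak A_{r,s}$ be a homomorphism induced by a mapping $\xi:X_n\mapsto Y_{r,s}$, where $n\geqslant(rs+r+1)(s+1)$. Then for every rectangular $(rs+r+1)\times(s+1)$ table $\tau d$ and every word $w\in\mathcal P_n$, the superization of $\xi\left(\psi_{\tau d}\left(w\right)\right)$ is equal to zero.

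Let me think carefully about how the proof of Lemma~\ref{lemma Endphi} works, then dualize it.

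In Lemma~\ref{lemma Endphi}, $\varphi_{\tau d}$ is an alternizer over columns composed with a symmetrizer over rows. The polynomial $f=\varphi_{\tau d}(w)$ is skew-symmetric in variables lying in the same column, and each summand $f'_\sigma$ is symmetric in variables lying in the same row. After substituting generators via $\xi$ and superizing, skew-symmetry forces no repeated even generator in any column (since superization introduces a sign when a parity-$0$ variable passes another parity-$0$ variable, and skew-symmetric + symmetric-after-superization kills repeats), and symmetry of the row forces no repeated odd generator in any row (superization introduces $-1$ when two odd variables swap). Counting: $r$ even generators per column $\times$ (number of columns) plus $s$ odd generators per row $\times$ (number of rows) must cover all $n'$ cells, yielding a contradiction with the chosen dimensions.

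For the present lemma the roles of rows/columns and even/odd swap. Here $\psi_{\tau d}$ is a \emph{symmetrizer over rows} composed with an alternizer over columns: $\psi_{\tau d}(w)=\sum_{\rho\in\mathrm R_{\tau d}}\psi'_\rho$ where $\psi'_\rho=\sum_{\sigma\in\mathrm C_{\rho\tau d}}(-1)^{|\sigma|}w(\dots)$. So $\psi_{\tau d}(w)$ is symmetric with respect to variables in the same row, while each $\psi'_\rho$ is skew-symmetric with respect to variables in the same column.

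Thus the dual count is as follows. If $\widetilde{\xi(\psi_{\tau d}(w))}\neq 0$, then by row-symmetry, after superizing, no row of $(\tau d)^\xi$ may contain the same \emph{odd} generator twice (two equal odd variables in a symmetric position cancel after the Koszul sign), so the number of odd generators in the whole table is at most $s\cdot(\text{number of rows})=s(rs+r+1)$. Passing to a single nonzero summand $\psi'_\rho$, which is skew-symmetric in each column, no column of $(\rho\tau d)^\xi$ may contain the same \emph{even} generator twice, so the number of even generators is at most $r\cdot(\text{number of columns})=r(s+1)$. Summing, the total number of cells is at most
\[
s(rs+r+1)+r(s+1)=(rs+r+1)(s+1)-1,
\]
which contradicts the fact that the table has $n'=(rs+r+1)(s+1)$ cells filled with distinct variables. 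The step requiring the most care is verifying the correct cancellation under superization: I must check that row-symmetry kills repeated \emph{odd} generators (not even ones) and column-skew-symmetry kills repeated \emph{even} generators, which is exactly the opposite of Lemma~\ref{lemma Endphi}, because a symmetric position combined with the Koszul sign $(-1)^{ij}$ produces cancellation precisely for odd--odd repeats, whereas a skew-symmetric position cancels even--even repeats. Once this sign bookkeeping is pinned down, the counting inequality is immediate and the proof concludes exactly in parallel with the preceding lemma.

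\begin{proof}
By definition, $\psi_{\tau d}(w)=\sum_{\rho\in\mathrm R_{\tau d}}\psi'_\rho$, where
\[
\psi'_\rho=\sum_{\sigma\in\mathrm C_{\rho\tau d}}(-1)^{|\sigma|}
w\bigl(x_{\sigma\rho(1)},\dots,x_{\sigma\rho(n')},x_{n'+1},\ldots,x_n\bigr),\quad n'=(rs+r+1)(s+1).
\]
As above, substitute $i$ by $\xi(x_i)$ in each cell of $\tau d$ and denote the result by $(\tau d)^\xi$. Since $g=\psi_{\tau d}(w)$ is symmetric with respect to each pair of its variables whose indices stand at the same row of $\tau d$, its superization $\widetilde{\xi(g)}$ can be non-zero only if none of the rows of $(\tau d)^\xi$ contains the same odd generator of $\mathfrak A_{r,s}$ twice; hence the whole number of odd elements in $(\tau d)^\xi$ is at most $s(rs+r+1)$. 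Next, if $\widetilde{\xi(g)}\neq 0$, then $\widetilde{\xi(\psi'_\rho)}\neq 0$ for at least one $\rho\in\mathrm R_{\tau d}$, and $\psi'_\rho$ is skew-symmetric with respect to each pair of its variables whose indices stand at the same column of $\rho\tau d$; hence none of the columns of $(\rho\tau d)^\xi$ contains the same even element twice, so the whole number of even elements in $(\tau d)^\xi$ is at most $r(s+1)$. Therefore the number of cells in $(\tau d)^\xi$ is at most
\[
s(rs+r+1)+r(s+1)=(rs+r+1)(s+1)-1,
\]
contradicting the fact that $(\tau d)^\xi$ has $(rs+r+1)(s+1)$ cells. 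This completes the proof.
\end{proof}
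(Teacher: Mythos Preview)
Your proof is correct and is exactly the argument the paper has in mind: the paper does not write out a proof of this lemma at all, merely stating that it follows ``by the similar arguments'' from the preceding Lemma~\ref{lemma Endphi}. You have carried out that dualization faithfully, swapping the roles of rows and columns and of even and odd generators, and the counting identity $s(rs+r+1)+r(s+1)=(rs+r+1)(s+1)-1$ is the precise analogue of the one appearing in the proof of Lemma~\ref{lemma Endphi}.
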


\subsection{Inclusions in the lattice $\lt{\mathfrak M}$}%
\label{SubSec:InclusionsInTheLatticeLM}

\begin{lemma}\label{lemma InclR}
For all naturals $r,s$, we have $\mathfrak
M_{r}\nsubseteq\mathfrak M_{r-1,s}$.
\end{lemma}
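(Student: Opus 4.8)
The plan is to separate the two varieties by a single multilinear identity: I will produce a polynomial $f\in\pna{\M}$ that vanishes throughout $\M_{r-1,s}$ but not on the free metabelian algebra $\svob{\M}{X_r}$ of rank $r$, whence $\svob{\M}{X_r}\in\M_{r}\setminus\M_{r-1,s}$. The natural candidate is built from the column skew-symmetrizer of Section~\ref{SubSec:EndomorphismsCorrespondingToYoungTables}. I set $n=r(rs+1)$, fix the rectangular $r\times(rs+1)$ Young table $\tau d$, and put
\[
f=\varphi_{\tau d}(w_0),\qquad w_0=(x_1x_2)R_{x_3}\cdots R_{x_n},
\]
where $\varphi_{\tau d}$ acts on the metabelian monomial $w_0$ by permuting the indices of its variables exactly as in the associative definition, so that $f$ is again a (multilinear, only-$R$, left-normed) element of $\pna{\M}$. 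These are precisely the data one obtains from Lemma~\ref{lemma Endphi} upon replacing the pair $(r,s)$ by $(r-1,s)$: the number of rows becomes $(r-1)+1=r$ and the number of columns $(r-1)s+s+1=rs+1$, while the degree bound reads $n\geqslant r(rs+1)$.

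For the first half I would check that $f=0$ holds throughout $\M_{r-1,s}$, i.e.\ that $\Tilde f$ is a superidentity of the free $\M$-superalgebra $\mathcal S=\svobs{\M}{Y_{r-1,s}}$. By multilinearity it suffices to substitute homogeneous basis elements of $\mathcal S$; by metability a monomial of $f$ survives only if every variable occupying an operator ($R$-)position is replaced by a generator and at most one leading factor is composite. Expanding such a composite leading factor into generators merely raises the total degree to some $N\geqslant n$, and each surviving value is a left-normed product $(y_ay_b)R_{y_{c_1}}\cdots=(\cdots((y_ay_b)y_{c_1})\cdots)$ that is recorded faithfully, with matching Koszul signs, by the associative word $y_ay_by_{c_1}\cdots$ in $\mathfrak A_{r-1,s}$. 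Since $\varphi_{\tau d}$ only permutes indices and preserves the left-normed shape, the value of $\Tilde f$ is obtained by applying a homomorphism $\xi\colon\mathcal P_N\to\mathfrak A_{r-1,s}$ to $\varphi_{\tau d}(w)$ for a suitable associative word $w$ and then superizing; Lemma~\ref{lemma Endphi}, applied with $(r-1,s)$ in place of $(r,s)$, gives $\widetilde{\xi(\varphi_{\tau d}(w))}=0$. Hence $f=0$ in $\M_{r-1,s}$.

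For the second half I would show $f\neq0$ already in $\svob{\M}{X_r}$, where every variable is replaced by one of the $r$ even free generators $x_1,\dots,x_r$. I choose the substitution so that each column of $\tau d$ receives a full permutation of $\{x_1,\dots,x_r\}$; this is possible precisely because the columns have height $r$ and there are $r$ generators available, and it is exactly here that the difference between $r$ and $r-1$ even generators is felt. The column antisymmetrizer then behaves like an $r\times r$ determinant and does not annihilate the value, and it remains to see that the subsequent row symmetrization does not cause total cancellation. Keeping $w_0$ left-normed, I would track one explicit basis monomial of $\pna{\M}$ and verify, using $\chr F=0$, that it occurs in $f$ with a nonzero coefficient; the linear independence of the basis monomials of the (non-nilpotent) free metabelian algebra then yields $f\neq0$ in $\svob{\M}{X_r}$.

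The main obstacle is the interface, in the first half, between the associative Lemma~\ref{lemma Endphi} and the nonassociative metabelian setting: one must make the reduction ``operator positions carry generators, at most one composite leading factor'' fully precise and confirm that the Koszul signs produced by superizing $\Tilde f$ in $\mathcal S$ agree with those produced by $\xi$ in $\mathfrak A_{r-1,s}$. The second genuine difficulty is the non-cancellation count in the second half, where I must guarantee that a bona fide basis monomial of $\pna{\M}$ survives the combined column antisymmetrization and row symmetrization rather than cancelling against its companions.
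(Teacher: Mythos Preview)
Your overall plan coincides with the paper's: build a multilinear polynomial from the column--row symmetrizer $\varphi_{\tau d}$ of an $r\times(rs+1)$ rectangular table, invoke Lemma~\ref{lemma Endphi} (with parameters $(r-1,s)$) to kill it on $\mathfrak M_{r-1,s}$, and then exhibit a nonzero value inside $\mathfrak M_{r}$. But two choices you make diverge from the paper's, and the first one creates a genuine gap.

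\textbf{The separating polynomial.} You let $\varphi_{\tau d}$ act on \emph{all} $n=r(rs+1)$ variables of $w_0=(x_1x_2)R_{x_3}\cdots R_{x_n}$, so the leading pair $(x_{\pi(1)},x_{\pi(2)})$ varies term by term. The paper instead takes
\[
f_k=(uv)\sum_{\sigma\in\mathrm C_{\tau d}}\sum_{\rho\in\mathrm R_{\sigma\tau d}}(-1)^{|\sigma|}R_{x_{\rho\sigma(1)}}\cdots R_{x_{\rho\sigma(kr)}},
\]
with two \emph{extra} variables $u,v$ that the Young symmetrizer never touches. This seemingly cosmetic change is exactly what makes the first half go through: by metability, every $x_i$ sitting in an $R$-position must be a generator for a term to survive, so after any substitution the whole expression is $(ab)$ times a $\varphi_{\tau d}$-symmetrized word in $R$-operators by \emph{generators}. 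Since $\bigl(\svobs{\mathfrak M}{Y_{r-1,s}}\bigr)^2$ is a free right module over the free associative superalgebra $\mathfrak A_{r-1,s}$, the operator word vanishes precisely when the corresponding associative word does, and that is Lemma~\ref{lemma Endphi}. In your version this reduction breaks: with exactly one variable $x_j$ replaced by a composite $w$, the terms with $\rho\sigma(2)=j$ produce $(g\cdot w)R_{\ldots}=w\,L_{g}R_{\ldots}$, i.e.\ words carrying an $L$-operator. Your claim that ``each surviving value is a left-normed product $(y_ay_b)R_{y_{c_1}}\cdots$'' is therefore false in that sub-case, and the associated map to $\mathfrak A_{r-1,s}$ no longer records the value faithfully. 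Lemma~\ref{lemma Endphi} as stated handles only generator substitutions, and the restricted sum over $\{\pi:\pi(1)=j\}$ or $\{\pi:\pi(2)=j\}$ is not itself a $\varphi_{\tau d}$-image, so there is no shortcut. The fix is precisely the paper's: move the product out front as a separate factor $(uv)$.

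\textbf{The nonvanishing on $\mathfrak M_r$.} You propose to detect a surviving basis monomial of $\pna{\mathfrak M}$ after substituting each column by a full permutation of the $r$ even generators; this is plausible but you leave the actual non-cancellation count unperformed. The paper sidesteps this by constructing a concrete $r$-generated metabelian algebra $A^{(r)}=U^{(r)}\dotplus M^{(r)}$ (a split null extension where $a_n\cdot e_i=a_{n+1}$ iff $n\equiv i\pmod r$) and computing directly that $f_k(a_{\bar 0},e_r,e_1,\ldots)=k!\,r\,a_{\bar 1}\neq0$. This is both shorter and avoids any combinatorics in the free algebra.
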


\begin{proof}
Let $f_k=f_k\left(u,v,x_1,\dots,x_{kr}\right)$ be the polynomial
\[
f_k=\left(uv\right) \sum_{\sigma\in{\mathrm C}_{\tau d}}
\sum_{\rho\in{\mathrm R}_{\sigma\tau d}}
{(-1)}^{\left|\sigma\right|}R_{x_{\rho\sigma(1)}}\dots
R_{x_{\rho\sigma(kr)}},
\]
where
\[
\tau d= \lefteqn{ \phantom{ \vline\hspace{5pt}
\begin{matrix}
\hline
1      \\
\hline
2      \\
\hline
\vdots  \\
\hline
r     \\
\hline
\end{matrix}
\vline\; }\vline} \lefteqn{ \phantom{ \vline\hspace{5pt}
\begin{matrix}
\hline
1      & r+1 \\
\hline
2      & r+2 \\
\hline
\vdots  & \vdots  \\
\hline
r     & 2r  \\
\hline
\end{matrix}
\vline\; }\vline} \lefteqn{ \phantom{ \vline\hspace{5pt}
\begin{matrix}
\hline
1      & r+1 & \dots  \\
\hline
2      & r+2 & \dots  \\
\hline
\vdots  & \vdots  & \ddots \\
\hline
r     & 2r  & \dots  \\
\hline
\end{matrix}
\vline\; }\vline} \vline\hspace{5pt}
\begin{matrix}
\hline
1      & r+1 & \dots  & (k-1)r+1 \\
\hline
2      & r+2 & \dots  & (k-1)r+2\\
\hline
\vdots  & \vdots  & \ddots & \vdots \\
\hline
r     & 2r  & \dots  &  kr\\
\hline
\end{matrix}
\hspace{5pt}\vline \:.
\]
We stress that applying Lemma~\ref{lemma Endphi}, it is not hard
to prove that $\Tilde{f}_k=0$ is a superidentity in $\mathfrak
M_{r-1,s}$ for $k=rs+1$. Thus to prove the Lemma it suffices to
construct some $\mathfrak M_{r}\text{-algebra}$ $A^{(r)}$ such
that $f_k$ takes nonzero values in $A^{(r)}$ for all $k$.

Consider the algebra
\[
U^{(r)}=\sum_{i=1}^r F\cdot e_i
\]
with null multiplication and the vector space
\[
M^{(r)}=\sum_{n\in\mathbb Z_r}F\cdot a_{n}.
\]
Let $A^{(r)}=U^{(r)}\dotplus M^{(r)}$ be the split null extension
with the multiplication induced by the following actions:
\[
a_{n}\cdot e_i=\left\{
\begin{aligned}
a_{n+1\!\!\!\pmod{r}},\quad n&\equiv i \pmod{r},\\
0,                 \quad n&\lefteqn{\;\,\slash}\equiv i \pmod{r}.
\end{aligned}\right.
\]
By definition, $A^{(r)}$ is a metabelian algebra generated by the
elements $a_{\Bar0}+e_r,e_1,\dots,e_{r-1}$.

Consider the mapping $\xi:X_{kr}\mapsto
\left\{e_1,\dots,e_r\right\}$ defined by the $r\times k$ table
\[
{(\tau d)}^\xi= \lefteqn{ \phantom{ \vline\hspace{5pt}
\begin{matrix}
\hline
e_1      \\
\hline
e_2      \\
\hline
\vdots  \\
\hline
e_r      \\
\hline
\end{matrix}
\;\vline} \vline} \lefteqn{ \phantom{ \vline\hspace{5pt}
\begin{matrix}
\hline
e_1      & e_1 \\
\hline
e_2      & e_2 \\
\hline
\vdots  & \vdots  \\
\hline
e_r      & e_r  \\
\hline
\end{matrix}
\;\vline} \vline} \lefteqn{ \phantom{ \vline\hspace{5pt}
\begin{matrix}
\hline
e_1      & e_1 & \dots  \\
\hline
e_2      & e_2 & \dots  \\
\hline
\vdots  & \vdots  & \ddots \\
\hline
e_r      & e_r  & \dots  \\
\hline
\end{matrix}
\;\vline} \vline} \vline\hspace{5pt}
\begin{matrix}
\hline
e_1      & e_1 & \dots  & e_1 \\
\hline
e_2      & e_2 & \dots  & e_2 \\
\hline
\vdots  & \vdots  & \ddots & \vdots \\
\hline
e_r      & e_r  & \dots  & e_r\\
\hline
\end{matrix}
\hspace{5pt}\vline \:,
\]
i.~e. $\xi(x_i)$ is the element of the ${(\tau d)}^\xi$ standing
in the cell corresponding to the index~$i$ in $\tau d$. To
conclude the proof it remains to verify that $f_k$ takes a nonzero
value in $A^{(r)}$. Indeed,
\[
f_k\left(a_{\Bar0},e_r,\xi(x_1),\dots,\xi(x_{kr})\right)=k!\,r\,a_{\Bar1}\neq0.
\]
\end{proof}

\begin{lemma}\label{lemma InclS}
For all naturals $r,s$, we have $\mathfrak
M_{0,s}\nsubseteq\mathfrak M_{r,s-1}$.
\end{lemma}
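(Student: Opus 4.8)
The plan is to run the proof of Lemma~\ref{lemma InclR} almost verbatim, interchanging the roles of even and odd generators and of symmetrization and skew-symmetrization throughout. Thus, in place of the column antisymmetrizer $\varphi_{\tau d}$ I would use the row symmetrizer $\psi_{\tau d}$, and for a rectangular $k\times s$ table $\tau d$ (that is, $k$ rows and $s$ columns) I would define
\[
g_k=\left(uv\right)\sum_{\rho\in{\mathrm R}_{\tau d}}\sum_{\sigma\in{\mathrm C}_{\rho\tau d}}{(-1)}^{\left|\sigma\right|}R_{x_{\sigma\rho(1)}}\dots R_{x_{\sigma\rho(ks)}}=\left(uv\right)\psi_{\tau d}\bigl(R_{x_1}\dots R_{x_{ks}}\bigr).
\]
Exactly as before, the argument then splits into showing (i) that $\Tilde{g}_k=0$ is a superidentity of $\mathfrak M_{r,s-1}$ for the single value $k=rs+1$, and (ii) that $\Tilde{g}_{rs+1}$ fails to vanish on a suitable $\mathfrak M_{0,s}\text{-superalgebra}$.

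For part (i) I would argue as in Lemma~\ref{lemma InclR}: by metability the superidentity $\Tilde{g}_k=0$ is controlled by the action of right multiplications on the square of the free metabelian superalgebra, which reduces it to the free associative superalgebra $\mathfrak A_{r,s-1}$. There Lemma~\ref{lemma Endpsi}, read with the parameters $(r,s-1)$ in place of $(r,s)$, applies: its required table size is $\bigl(r(s-1)+r+1\bigr)\times\bigl((s-1)+1\bigr)=(rs+1)\times s$, which is precisely $\tau d$ when $k=rs+1$. Hence the superization of $\xi\bigl(\psi_{\tau d}(w)\bigr)$ vanishes in $\mathfrak A_{r,s-1}$, forcing $\Tilde{g}_{rs+1}=0$ throughout $\mathfrak M_{r,s-1}$.

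For part (ii) I would construct the odd-generator analogue $B^{(s)}$ of the cyclic algebra $A^{(r)}$: a split null extension of the superalgebra $\sum_{j=1}^s F\cdot y_j$ on $s$ odd generators with null multiplication, by a module indexed by $\mathbb Z_s$ on which each $y_j$ acts as the shift sending the $n$-th basis vector to the $(n+1)$-st exactly when $n\equiv j\pmod s$. The decisive observation is that, upon superizing, the Koszul rule turns the column antisymmetrization in $\psi_{\tau d}$ into an \emph{effective symmetrization} of the odd arguments and its row symmetrization into an \emph{effective antisymmetrization}. The correct evaluation is therefore the transpose of the one used for $A^{(r)}$: one fills the $j$-th column of $\tau d$ entirely with the single generator $y_j$, so that each row carries the $s$ distinct generators $y_1,\dots,y_s$. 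Repeated odd entries now survive along a column because antisymmetry has become symmetry, while the distinct entries along a row survive because symmetry has become antisymmetry, and tracing the forced walk of the shift action yields a nonzero multiple of a module element, just as $f_k$ produced $k!\,r\,a_{\Bar1}$ in $A^{(r)}$.

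The step I expect to be the main obstacle is the parity bookkeeping hidden in part (ii). Since each odd generator reverses parity, a single shift by $y_j$ flips the degree of a module element, so a plain $\mathbb Z_s$-graded module is inconsistent for odd $s$ (one loop around the cycle changes parity by $s$). I would cure this by a sign-twisted periodicity $b_{n+s}=-b_n$, or by replacing the cyclic module with one assembled from two Grassmann algebras whose anticommutativity absorbs the Koszul signs, in the spirit of $\Bar{\A}^{(n+1)}$ from Section~\ref{Sec:MalcevAlgebras}. Verifying that $B^{(s)}$ really is a metabelian superalgebra generated by its $s$ odd elements, together with the precise sign in the evaluation of $\Tilde{g}_{rs+1}$, is the delicate point; the remainder is the formal mirror of Lemma~\ref{lemma InclR}.
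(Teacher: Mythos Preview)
Your proposal is correct and follows essentially the same approach as the paper: the paper uses exactly your polynomial $g_k$ (there called $f_k$) built from $\psi_{\tau d}$ on a $k\times s$ table with $k=rs+1$, invokes Lemma~\ref{lemma Endpsi} for part (i), and for part (ii) constructs precisely the cyclic split null extension on $s$ odd generators with the column-constant substitution you describe. The parity obstacle you anticipate is resolved in the paper not by a sign twist (which, as you may check, fails for odd $s$ since $-b_n$ cannot change parity) but by the simplest doubling: the module is indexed by $\mathbb Z_{2s}$ with $a_m$ of parity $m\bmod 2$ and the shift condition $n\equiv i\pmod s$, so that one full loop of length $s$ lands on $a_{n+s}$ rather than back on $\pm a_n$.
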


\begin{proof}
Let $f_k=f_k\left(u,v,x_1,\dots,x_{ks}\right)$ be the polynomial
\[
f_k=\left(uv\right) \sum_{\rho\in{\mathrm R}_{\tau d}}
\sum_{\sigma\in{\mathrm C}_{\rho\tau d}}
{(-1)}^{\left|\sigma\right|}R_{x_{\sigma\rho(1)}}\dots
R_{x_{\sigma\rho(ks)}},
\]
where
\[
\tau d= \lefteqn{ \phantom{ \vline\hspace{5pt}
\begin{matrix}
\hline
1      \\
\hline
s+1      \\
\hline
\vdots  \\
\hline
(k-1)s+1     \\
\hline
\end{matrix}
\;\vline }\vline} \lefteqn{ \phantom{ \vline\hspace{5pt}
\begin{matrix}
\hline
1      & 2 \\
\hline
s+1      & s+2 \\
\hline
\vdots  & \vdots \\
\hline
(k-1)s+1      & (k-1)s+2 \\
\hline
\end{matrix}
\;\vline }\vline} \lefteqn{ \phantom{ \vline\hspace{5pt}
\begin{matrix}
\hline
1      & 2 & \dots  \\
\hline
s+1      & s+2 & \dots \\
\hline
\vdots  & \vdots  & \ddots  \\
\hline
(k-1)s+1      & (k-1)s+2  & \dots  \\
\hline
\end{matrix}
\;\vline }\vline} \vline\hspace{5pt}
\begin{matrix}
\hline
1      & 2 & \dots  & s \\
\hline
s+1      & s+2 & \dots  & 2s \\
\hline
\vdots  & \vdots  & \ddots & \vdots \\
\hline
(k-1)s+1      & (k-1)s+2  & \dots  &  ks\\
\hline
\end{matrix}
\hspace{5pt}\vline \:.
\]
We stress that applying Lemma~\ref{lemma Endpsi}, it is not hard
to prove that $\Tilde{f}_k=0$ is a superidentity in $\mathfrak
M_{r,s-1}$ for $k=rs+1$. Thus to prove the Lemma it suffices to
construct some $\mathfrak M_{0,s}\text{-superalgebra}$ $\A^{(s)}$
such that $\Tilde{f}_k$ takes nonzero values in $\A^{(s)}$ for all
$k$.

Let $U^{(s)}=U^{(s)}_0+U^{(s)}_1$ be the superalgebra
\[
U^{(s)}_0=\{0\},\quad\; U^{(s)}_1=\sum_{i=1}^s F\cdot y_i
\]
with null multiplication and $M^{(s)}=M^{(s)}_0+M^{(s)}_1$ be the
vector space
\[
M^{(s)}_0=\sum_{n\in\mathbb Z_s}F\cdot a_{2n},\quad\;
M^{(s)}_1=\sum_{n\in\mathbb Z_s}F\cdot a_{2n+1}.
\]
Consider a split null extension $\A^{(s)}=U^{(s)}\dotplus M^{(s)}$
with a multiplication induced by the following actions:
\[
a_{n}\cdot y_i=\left\{
\begin{aligned}
a_{n+1\!\!\!\pmod{2s}},\quad n&\equiv i \pmod{s},\\
0,                 \quad n&\lefteqn{\;\,\slash}\equiv i \pmod{s}.
\end{aligned}\right.
\]
One can easily check that $\A^{(s)}$ is a metabelian superalgebra
generated by the odd elements $a_{\Bar1}+y_1,y_2,\dots,y_s$.

Let us consider the mapping $\xi:X_{ks}\mapsto
\left\{y_1,\dots,y_s\right\}$ defined by the $k\times s$ table
\[
{(\tau d)}^\xi= \lefteqn{ \phantom{ \vline\hspace{5pt}
\begin{matrix}
\hline
y_1     \\
\hline
y_1     \\
\hline
\vdots  \\
\hline
y_1     \\
\hline
\end{matrix}
\;\vline }\vline} \lefteqn{ \phantom{ \vline\hspace{5pt}
\begin{matrix}
\hline
y_1      & y_2 \\
\hline
y_1      & y_2 \\
\hline
\vdots  & \vdots \\
\hline
y_1      & y_2  \\
\hline
\end{matrix}
\;\vline }\vline} \lefteqn{ \phantom{ \vline\hspace{5pt}
\begin{matrix}
\hline
y_1      & y_2 & \dots  \\
\hline
y_1      & y_2 & \dots  \\
\hline
\vdots  & \vdots  & \ddots \\
\hline
y_1      & y_2  & \dots  \\
\hline
\end{matrix}
\;\vline }\vline} \vline\hspace{5pt}
\begin{matrix}
\hline
y_1      & y_2 & \dots  & y_s \\
\hline
y_1      & y_2 & \dots  & y_s \\
\hline
\vdots  & \vdots  & \ddots & \vdots \\
\hline
y_1      & y_2  & \dots  &  y_s \\
\hline
\end{matrix}
\hspace{5pt}\vline \:,
\]
i.~e. $\xi(x_i)$ is the element of the ${(\tau d)}^\xi$ standing
in the cell corresponding to the index~$i$ in $\tau d$. To
conclude the proof it remains to verify that $\Tilde{f}_k$ takes a
nonzero value in $\A^{(s)}$. Indeed,
\[
\frac{1}{k!s}\Tilde{f}_k\left(a_{\Bar0},y_s,\xi(x_1),\dots,\xi(x_{ks})\right)
=\left\{
\begin{aligned}
a_{\Bar1},\quad&\text{if $k$ is even},\\
a_{\overline{s+1}},\quad&\text{if $k$ is odd}.
\end{aligned}\right.
\]
\end{proof}

\medskip

Lemmas~\ref{lemma InclR} and~\ref{lemma InclS} yield that
$\mathfrak M_{r',s}\nsubseteq\mathfrak M_{r,s'}$ and $\mathfrak
M_{r,s'}\nsubseteq\mathfrak M_{r',s}$ for all nonnegative integers
$r'<r$, $s'<s$. Moreover, the inclusions $\mathfrak
M_{r',s}\subset\mathfrak M_{r,s}$ and $\mathfrak
M_{r,s'}\subset\mathfrak M_{r,s}$ are strict.

\smallskip

Theorem~\ref{theorem MetabInclusions} is proved.

\smallskip

Corollary~\ref{corollary MetabInfin} is an immediate consequence
of Theorem~\ref{theorem MetabInclusions}.

\subsection{Variety of unique arbitrary given finite basic superrank}%
\label{SubSec:VarietyOfUniqueBSR}

Let us deduce Corollary~\ref{corollary MetabArbit}. For an
arbitrary pare $(r,s)\neq(0,0)$ of nonnegative integers, consider
the free $\mathfrak M_{r,s}\text{-superalgebra}$ $\mathfrak
A_{r,s}$. Let $\mathfrak N=\vr \mathrm G\left(\mathfrak
A_{r,s}\right)$ be the variety generated by the Grassmann envelope
of $\mathfrak A_{r,s}$. Then, by definition, we have
$\Tilde{\mathfrak N}=\mathfrak M_{r,s}$. It remains to prove that
$\mathfrak N$ doesn't possess any other basic superrank $(r',s')$
such that at least one of the inequalities $r'<r$, $s'<s$ holds.
Indeed, we stress that Theorem~\ref{theorem MetabInclusions}
states the strict inclusion
\[
\left(\mathfrak M_{r,s}\cap\mathfrak
M_{r',s'}\right)\subset\mathfrak M_{r,s}.
\]
At the same time, it is clear that $\mathfrak N_{r',s'}=\mathfrak
M_{r,s}\cap\mathfrak M_{r',s'}$. Thus, $\mathfrak N_{r',s'}\subset
\Tilde{\mathfrak N}$, i.~e.\linebreak $(r',s')\notin\bsp{\mathfrak
N}$. Therefore, $\bsp{\mathfrak N}=\left\{(r,s)\right\}$.

\section{Right alternative and right symmetric algebras}%
\label{Sec:RightAlternativeAndRightSymmetricAlgebras}

The variety of right alternative algebras is a well-known source
of examples of nonfinitely based
varieties~\cite{Belkin76,Isaev86,Kuz'min15,Pchelintsev07} over a
field of characteristic zero. The similar results can be obtained
for the variety of right symmetric algebras. In this section, we
construct the varieties of right alternative and right symmetric
algebras having no finite basic superrank.

\smallskip

Recall that by $\Ve$ $(\varepsilon=\pm1)$ we denote the subvariety
of $\mathfrak M$ distinguished by identities~\eqref{eq
EpsilonSymm} and~\eqref{eq EpsilonNil2}. Let us prove
Theorem~\ref{theorem VeInfin}.

\subsection{Auxiliary $\Ve\text{-superalgebra}$}%
\label{SubSec:AuxiliarySuperalgebra}

Let $U=U_0+U_1$ be the superalgebra
\[
U_0=\{0\},\quad\; U_1=\sum_{i=1}^\infty F\cdot y_i
\]
with null multiplication and $M=M_0+M_1$ be the vector space
\[
M_0=\sum_{i=1}^\infty F\cdot a_i,\quad\; M_1=\sum_{i=1}^\infty
F\cdot w_i.
\]
Consider a split null extension $\Ae=U\dotplus M$ such that all
nonzero products of the basis elements of $\Ae$ are the following:
\[
y_i\cdot a_i=\varepsilon\, a_i\cdot y_i=w_{i+1},\quad\; y_i\cdot
w_i=a_i.
\]

\begin{lemma}\label{lemma Ae-Ve}
$\Ae$ is a $\Ve\text{-superalgebra}$ on a countable set of odd
generators.
\end{lemma}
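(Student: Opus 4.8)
The plan is to check the three conditions that make $\Ae$ a $\Ve\text{-superalgebra}$: metability, the superizations of the defining identities \eqref{eq EpsilonSymm} and \eqref{eq EpsilonNil2}, and generation by countably many odd elements. Metability and generation are quick. Since $\Ae=U\dotplus M$ is a split null extension with $U^{2}=0$ and $M^{2}=0$, every product of two elements lies in $M$, so $(xy)(zt)\in M\cdot M=0$; thus $xy\cdot zt=0$ holds identically in $\Ae$ and its superization vanishes as well. For the generation I would take the odd elements $w_{1},y_{1},y_{2},\dots$ and recover the rest of the basis inductively through $a_{i}=y_{i}w_{i}$ and $w_{i+1}=y_{i}a_{i}$; the element $w_{1}$ must be retained as a generator, since no product of basis elements ever produces $w_{1}$.

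The substantive part is the two superidentities, so next I would write them down explicitly via the Koszul rule. Transposing the last two arguments of an associator introduces the sign $(-1)^{|y||z|}$, so the superization of \eqref{eq EpsilonSymm} reads
\[
\left(x,y,z\right)=\varepsilon(-1)^{|y||z|}\left(x,z,y\right)
\]
for homogeneous $x,y,z$, while, writing $\langle a,b\rangle^{s}_{\varepsilon}=ab-\varepsilon(-1)^{|a||b|}ba$ for the superized $\varepsilon$-bracket, the superization of \eqref{eq EpsilonNil2} is $\langle\langle x,y\rangle^{s}_{\varepsilon},z\rangle^{s}_{\varepsilon}=0$. Both superpolynomials are multilinear, so it suffices to evaluate them on the homogeneous basis $\{y_{i},a_{i},w_{i}\}$.

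For \eqref{eq EpsilonNil2} the key observation is that the superized bracket of two basis elements always lands in $M_{0}=\sum_{i}F\cdot a_{i}$ or is zero: one finds $\langle y_{i},w_{i}\rangle^{s}_{\varepsilon}=a_{i}$ and $\langle w_{i},y_{i}\rangle^{s}_{\varepsilon}=\varepsilon a_{i}$, whereas $\langle y_{i},a_{i}\rangle^{s}_{\varepsilon}$ and $\langle a_{i},y_{i}\rangle^{s}_{\varepsilon}$ vanish because their two $w_{i+1}$-contributions cancel (here $\varepsilon^{2}=1$). Since the only nonzero products involving $a_{i}$ are $y_{i}a_{i}$ and $a_{i}y_{i}$, the same cancellation gives $\langle a_{i},z\rangle^{s}_{\varepsilon}=0$ for every basis element $z$, so the nested bracket is zero. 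For \eqref{eq EpsilonSymm} I would instead enumerate the nonzero associators of basis triples; because $\Ae$ is metabelian and split, a triple product is nonzero only when two factors from $U$ act (on the appropriate side) on one factor from $M$, and a short exhaustive search leaves only
\[
\left(y_i,w_i,y_i\right)=\varepsilon w_{i+1},\quad \left(y_i,y_i,w_i\right)=-w_{i+1},\quad \left(y_{i+1},y_i,a_i\right)=-a_{i+1},\quad \left(y_{i+1},a_i,y_i\right)=-\varepsilon a_{i+1}.
\]
These fall into the two swap-pairs $\{\left(y_i,w_i,y_i\right),\left(y_i,y_i,w_i\right)\}$ and $\{\left(y_{i+1},y_i,a_i\right),\left(y_{i+1},a_i,y_i\right)\}$, and in each pair the displayed values satisfy $\left(x,y,z\right)=\varepsilon(-1)^{|y||z|}\left(x,z,y\right)$; since no triple with $\left(x,y,z\right)=0$ and $\left(x,z,y\right)\neq0$ occurs, \eqref{eq EpsilonSymm} follows.

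The main obstacle is bookkeeping rather than conceptual: pinning down the Koszul signs in the two superizations exactly, and confirming that the list of nonzero degree-three products is complete. The metabelian split-null structure keeps this enumeration finite and short, so the only genuine care needed is in tracking how $\varepsilon$ combines with $(-1)^{|y||z|}$ so that the two members of each associator pair — and the cancelling $w_{i+1}$-contributions in the bracket computation — agree.
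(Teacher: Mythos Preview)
Your proof is correct and follows essentially the same route as the paper: metability from the split-null structure, the generating set $w_1,y_1,y_2,\dots$, the enumeration of the four nonzero basis associators to verify the superization of~\eqref{eq EpsilonSymm}, and the observation that the inner $\varepsilon$-bracket lands in $M_0$ with $\langle M_0,U\rangle^{s}_{\varepsilon}=0$ for~\eqref{eq EpsilonNil2}. The only difference is that you spell out the Koszul signs explicitly, whereas the paper leaves them implicit.
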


\begin{proof}
By definition, $\Ae$ is metabelian and can be generated by the
elements $w_1,y_1,y_2,\dots$. Moreover, it is not hard to check
that the only nonzero associators on the basis elements of $\Ae$
are the following:
\begin{align*}
\ass{y_i}{w_i}{y_i}&=\left(y_i\cdot w_i\right)\cdot y_i=a_i\cdot y_i=\varepsilon w_{i+1},\\
\ass{y_i}{y_i}{w_i}&=-y_i\cdot\left(y_i\cdot w_i\right)=-y_i\cdot a_i=-w_{i+1},\\
\ass{y_{i+1}}{a_i}{y_i}&=-y_{i+1}\cdot\left(a_i\cdot
y_i\right)=-y_{i+1}\cdot\left(\varepsilon w_{i+1}\right)=
-\varepsilon a_{i+1},\\
\ass{y_{i+1}}{y_i}{a_i}&=-y_{i+1}\cdot\left(y_i\cdot
a_i\right)=-y_{i+1}\cdot w_{i+1}=-a_{i+1}.
\end{align*}
Thus one can see that the superization of~\eqref{eq EpsilonSymm}
holds in $\Ae$. Finally, it remains to prove that $\Ae$ satisfies
the superization of~\eqref{eq EpsilonNil2}. Actually it suffices
to notice that\linebreak $\left<M_0,U\right>_\varepsilon=0$.
\end{proof}

\subsection{Infiniteness of the basic superrank of $\Ve$}%
\label{SubSec:InfinitenessOfTheBasicSuperrankOfVe}

Let
$f_{k,n}=f_{k,n}\left(u,v,x_1,z_1,x_2,z_2,\dots,x_{kn-1},z_{kn-1},x_{kn}\right)$
be the polynomial
\[
f_{k,n}=\left(uv\right) \sum_{\rho\in{\mathrm R}_{\tau d}}
\sum_{\sigma\in{\mathrm C}_{\rho\tau d}}
{(-1)}^{\left|\sigma\right|}L_{x_{\sigma\rho(1)}}L_{z_1}L_{x_{\sigma\rho(2)}}L_{z_2}\dots
L_{x_{\sigma\rho(kn-1)}}L_{z_{kn-1}}L_{x_{\sigma\rho(kn)}},
\]
where
\[
\tau d= \lefteqn{ \phantom{ \vline\hspace{5pt}
\begin{matrix}
\hline
1      \\
\hline
n+1     \\
\hline
\vdots  \\
\hline
(k-1)n+1 \\
\hline
\end{matrix}
\;\vline }\vline} \lefteqn{ \phantom{ \vline\hspace{5pt}
\begin{matrix}
\hline
1      & 2 \\
\hline
n+1      & n+2  \\
\hline
\vdots  & \vdots  \\
\hline
(k-1)n+1      & (k-1)n+2 \\
\hline
\end{matrix}
\;\vline }\vline} \lefteqn{ \phantom{ \vline\hspace{5pt}
\begin{matrix}
\hline
1      & 2 & \dots  \\
\hline
n+1      & n+2 & \dots  \\
\hline
\vdots  & \vdots  & \ddots  \\
\hline
(k-1)n+1      & (k-1)n+2  & \dots  \\
\hline
\end{matrix}
\;\vline }\vline} \vline\hspace{5pt}
\begin{matrix}
\hline
1      & 2 & \dots  & n \\
\hline
n+1      & n+2 & \dots  & 2n \\
\hline
\vdots  & \vdots  & \ddots & \vdots \\
\hline
(k-1)n+1      & (k-1)n+2  & \dots  &  kn\\
\hline
\end{matrix}
\hspace{5pt}\vline \:.
\]
By Lemma~\ref{lemma Endpsi}, the superpolynomial $\Tilde{f}_{k,n}$
is a superidentity of $\mathfrak M_{r,s}$ for $k=rs+r+1$ and
$n=s+1$. Therefore in view of Lemma~\ref{lemma Ae-Ve}, to prove
the strictness of inclusions $\Ve_{r,s}\subset\Tilde{\mathcal
V}^{\left<\varepsilon\right>}$ it suffices to verify that
$\Tilde{f}_{k,n}$ takes a nonzero value in $\Ae$. Indeed, let us
denote by
\[
\lambda_{k,n}=\lambda_{k,n}\left(L_{x_1},L_{z_1},L_{x_2},L_{z_2},\dots,L_{x_{kn-1}},L_{z_{kn-1}},L_{x_{kn}}\right)
\]
the superpolynomial on operators of left multiplication such that
$\Tilde{f}_{k,n}=(uv)\lambda_{k,n}$. Then by the substitution
$u=y_1$, $v=w_1$, $x_i=y_i$, and $z_i=y_{i+1}$ for $i=1,\dots,kn$,
we have
\[
\Tilde{f}_{k,n}
=a_1\lambda_{k,n}\left(L_{y_1},L_{y_2},L_{y_2},L_{y_3},\dots,L_{y_{kn-1}},L_{y_{kn}},L_{y_{kn}}\right)
=w_2 L^2_{y_2}L^2_{y_3}\dots L^2_{y_{kn}}=w_{kn+1}\neq0.
\]

\smallskip

Theorem~\ref{theorem VeInfin} is proved.

\section{Open problems}%
\label{Sec:OpenProblems}

\begin{enumerate}
\item
Is it true that for every pare of nonnegative integers
$(r,s)\neq (0,0)$
there is a variety
$\mathcal V$ of associative algebras that has the unique basic superrank~$(r,s)$?
\item
What condition should satisfy a set
of
$n\geqslant 2$
pares of nonnegative integers to be the basic spectrum of some variety of algebras?
\item
Do the varieties
$\mathrm{Alt}$, $\mathrm{Jord}$, $\mathrm{Malc}$
have finite basic superranks?
\item
Does every solvable subvariety of
$\mathrm{Alt}$, $\mathrm{Jord}$, $\mathrm{Malc}$
 have a finite basic superrank?
\item
Are there any subvarieties of
$\mathrm{Alt}$, $\mathrm{Jord}$, $\mathrm{Malc}$
of infinite basic superrank?
\item
Does every Spechtian variety of algebras have a finite basic superrank?
\end{enumerate}

\subsection*{Acknowledgments}%
The article was carried out at the Department of Mathematics and Statistics of the University of S\~ao Paulo (IME-USP), Brazil.
The authors are very thankful to the IME-USP for the kind hospitality and the creative atmosphere.
For the first author, this paper is a part of the postdoc project supported by the FAPESP 2010/51880--2 (2011--2014)
and the PNPD/CAPES/UFRN--PPgMAE (2015).
The second author is supported by the FAPESP 2014/09310--5 and the CNPq 303916/2014--1.

\subsection*{Affiliations}%
Alexey Kuz'min\\
PPgMAE\\
Universidade Federal do Rio Grande do Norte\\
Departamento de Mathem\'atica\\
Centro de Ci\^encias Exatas e da Terra\\
Campus Universit\'ario, Lagoa Nova, Natal\\
RN, 59078-970, Brazil\\
e-mail: amkuzmin@ya.ru\\

\smallskip

\noindent Ivan Shestakov\\
Instituto de Mathem\'atica e Estat\'istica\\
Universidade de S\~ao Paulo\\
Rua do Mat\~ao, 1010\\
Cidade Universit\'aria, S\~ao Paulo\\
SP, 05508-090, Brazil\\
e-mail: shestak@ime.usp.br
\end{document}